\documentclass[smallcondensed]{svjour3}

\smartqed

\usepackage{graphicx}
\usepackage[utf8]{inputenc}
\usepackage{amsmath,amssymb}
\usepackage{framed}
\usepackage{tikz}
\usepackage{verbatim}
\usepackage{xcolor}
\usepackage{bbm}

\newtheorem{Def}{Definition}[section]
\newtheorem{Theorem}[Def]{Theorem}
\newtheorem{Lemma}[Def]{Lemma}

\makeatletter
\newcommand{\leqnomode}{\tagsleft@true\let\veqno\@@leqno}
\newcommand{\reqnomode}{\tagsleft@false\let\veqno\@@eqno}
\makeatother

\setlength{\parindent}{0pt}


\begin{document}

\title{Strong Convexity for Risk-Averse Two-Stage Models with Fixed Complete Linear Recourse
\thanks{The authors gratefully acknowledge the support of the German Research Foundation (DFG) within the collaborative research center TRR 154 ``Mathematical Modeling, Simulation and Optimization Using the Example of Gas Networks''.}
}

\titlerunning{Strong Convexity for Risk-Averse Linear Two-Stage Models}

\author{Matthias Claus \and Kai Sp\"urkel}

\authorrunning{M. Claus and K. Sp\"urkel}

\institute{M. Claus \at University Duisburg-Essen \\ Thea-Leymann-Straße 9 \\ D-45127 Essen \\ Tel.: +49 201 183 6887 \\ \email{matthias.claus@uni-due.de}
\and
K. Sp\"urkel \at University Duisburg-Essen \\ Thea-Leymann-Straße 9 \\ D-45127 Essen \\ Tel.: +49 201 183 6890 \\ \email{kai.spuerkel@uni-due.de}}

\date{Received: date / Accepted: date}

\maketitle

\begin{abstract}
This paper generalizes results concerning strong convexity of two-stage mean-risk models with linear recourse to distortion risk measures. Introducing the concept of (restricted) partial strong convexity, we conduct an in-depth analysis of the expected excess functional with respect to the decision variable and the threshold parameter. These results allow to derive sufficient conditions for strong convexity of models building on the conditional value-at-risk due to its variational representation. 
Via Kusuoka representation these carry over to comonotonic and distortion risk measures, where we obtain verifiable conditions in terms of the distortion function. 
For stochastic optimisation models, we point out implications for quantitative stability with respect to perturbations of the underlying probability measure.
Recent work in \cite{Ba14} and \cite{WaXi17} also gives testimony to the importance of strong
convexity for the convergence rates of modern stochastic subgradient descent algorithms and 
in the setting of machine learning.

\keywords{Two-Stage Stochastic Programming \and Linear Recourse \and Strong Convexity \and Conditional Value at Risk \and Comonotonic Risk Measures \and Stability}

\subclass{90C15 \and 90C31}
\end{abstract}

\section{Introduction}
In \cite{S94} Schultz considered the expectation based 
two-stage optimisation problem
\begin{align}\label{OptProblem1}
\min\{ \ \mathbb{E}_\omega[ \, h(\xi) +  \varphi(z(\omega) - T\xi) \, ] \ : \ \xi \in X \}
\end{align}
where $X$ is a subset of some $\mathbb{R}^n$, $h$ convex and real-valued,
$z(\omega)$ a $s$-dimensional random vector on some propability space $(\Omega,\mathcal{F}, \mathbb{P})$ and 
$\varphi$ a recourse function of the form
\begin{align}\label{valuefunction}
\varphi(t) = \min\{ q^\intercal y \ | \ Wy = t, y \geq 0 \}
\end{align}
which is the value function of a linear program with parametric right-hand side.
For a general introduction to such models we refer to the standard
textbooks \cite{BL11} and \cite{SDR14}.
The optimisation problem \eqref{OptProblem1} can be rewritten as
\begin{align}\label{OptProblem2}
\min \{\hat{h}(x) + Q_\mathbb{E}(x) \ | \ x \in T(X) \}
\end{align}
with
\begin{align}
\label{Hath}
\hat{h}(x) = \min\{ h(\xi) \ | \ T \xi = x, \ \xi \in X \}
\end{align}
and the reduced expectation function
\begin{align}\label{QExpectation}
Q_\mathbb{E}(x) = \mathbb{E}_\mu[ \, \varphi(z-x)  \, ]
\end{align}
where $\mu$ denotes the pushforward measure of $z$. It is well-known that under mild assumptions $Q_\mathbb{E}$ is well-defined and convex on all of $\mathbb{R}^s$. 
For further structural analysis of the optimisation problem \eqref{OptProblem1}
(e.g. stability analysis, cf. section 4) and its algorithmic treatment with
subgradient schemes (cf. \cite{Nes04}) conditions for strong convexity may be desirable  
and for the risk-neutral setting were given in \cite{S94}, Theorem 2.2:
\begin{theorem}\label{TheoremSCExp} 
Assume that the following conditions are satisfied:
\begin{itemize}
\item[A1] For every $t$ there exist some $y \geq 0$ such that $W y = t$. 
(Complete recourse)
\item[A2] There exists some $v$ with $W^\intercal v < q$. 
(Strengthened sufficiently expensive recourse)
\item[A3] $\|z\|$ is $\mu$-integrable. (Finite first moments)
\item[A4] $\mu$ has a density $\theta$ with respect to the Lebesgue-measure and 
there exists a convex open set $V$ , constants $r, \rho > 0$ such that 
$\theta \geq  r$ a.s. on $V + B_\rho(0)$.
\end{itemize}
Then $Q_\mathbb{E}$ is strongly convex on $V$.
\end{theorem}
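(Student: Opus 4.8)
The plan is to exploit linear programming duality to obtain a workable description of $\varphi$, and then to show that averaging against the density $\theta$ turns the piecewise-linear (hence not strongly convex) integrand into a function with uniformly positive curvature on $V$. First I would record the structure of $\varphi$: by A1 the primal in \eqref{valuefunction} is feasible for every right-hand side, while A2 guarantees that the dual feasible set $D := \{v : W^\intercal v \leq q\}$ has nonempty interior; together with finiteness of $\varphi$ this forces $D$ to be a full-dimensional polytope. LP duality then gives $\varphi(t) = \max_{v \in D} v^\intercal t$, so $\varphi$ is finite, convex, piecewise-linear and positively homogeneous, and its gradient $\nabla\varphi(t)$ (defined for a.e. $t$) is the maximizing vertex, hence lies in the compact set $D$. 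Since $\nabla\varphi$ is bounded and $\theta\in L^1$ by A3, dominated convergence shows $Q_\mathbb{E}$ is continuously differentiable with $\nabla Q_\mathbb{E}(x) = -\int \nabla\varphi(z-x)\,\theta(z)\,dz$; the density smooths away the kinks of $\varphi$.

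Next I would reduce strong convexity to a one-dimensional, directional statement. For a convex $C^1$ function, $\kappa$-strong convexity on $V$ is equivalent to $G_d''(\alpha) \geq \kappa$ for every unit direction $d$ and every segment $\alpha \mapsto y + \alpha d$ in $V$, where $G_d(\alpha) := Q_\mathbb{E}(y + \alpha d)$. I would extract this from the increments of $G_d'$: writing $D_d\varphi$ for the directional derivative and $a := y + \alpha d \in V$, a change of variables together with the Fubini decomposition $p = w + \sigma d$, $w \perp d$, yields
\begin{equation*}
G_d'(\alpha+L) - G_d'(\alpha) = \int_{d^\perp}\int_{\mathbb{R}} \big[\eta_w(\sigma) - \eta_w(\sigma - L)\big]\,\theta(a + w + \sigma d)\,d\sigma\,dw,
\end{equation*}
where $\eta_w(\sigma) := D_d\varphi(w + \sigma d)$ is nondecreasing and increases in total by the width $\Delta_d := \max_{v\in D} v^\intercal d - \min_{v\in D} v^\intercal d$ of $D$ in direction $d$. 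The shift identity $\int_{\mathbb{R}}[\eta_w(\sigma)-\eta_w(\sigma-L)]\,d\sigma = L\,\Delta_d$ is the source of the curvature.

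The heart of the argument is to feed in A4 together with the homogeneity of $\varphi$. Since $\varphi$ is positively homogeneous, its kink set is a union of polyhedral cones through the origin, so for small transverse displacement $\|w\|$ all breakpoints of $\eta_w$ concentrate in a short window around $\sigma = 0$; choosing the window and $\|w\| \leq \delta$ small enough that the whole slab stays in $B_\rho(a) \subseteq V + B_\rho(0)$, the lower bound $\theta \geq r$ of A4 applies on the entire slab and the full increase $\Delta_d$ is captured, giving
\begin{equation*}
G_d'(\alpha+L) - G_d'(\alpha) \;\geq\; r\,\Delta_d\, \mathrm{vol}_{s-1}(B_\delta \cap d^\perp)\, L \;=:\; \kappa\, L
\end{equation*}
for all small $L>0$. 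Crucially, A2 makes $D$ full-dimensional, so $\Delta_d \geq 2\rho_D > 0$ uniformly over unit directions $d$, and $\kappa$ is positive and independent of $d$ and of the base point in $V$; integrating this Dini estimate along the line upgrades it to $G_d'(\alpha_2)-G_d'(\alpha_1) \geq \kappa(\alpha_2-\alpha_1)$, i.e. $G_d''\geq\kappa$, and hence strong convexity on all of $V$.

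I expect the main obstacle to be the uniform control of where the slope of $\eta_w$ increases. The clean picture ``all breakpoints lie in a fixed small window'' can fail as the direction of $w$ approaches one parallel to a kink hyperplane, where a breakpoint escapes to infinity; handling this requires either a compactness argument over the finitely many cells of the normal fan, or a reduction showing that only a uniformly positive fraction of $\Delta_d$ — governed by positive-definiteness of the curvature matrix $M := \sum_F \lambda_F\, n_F n_F^\intercal$ (over kink facets $F$ with normals $n_F$ and slope jumps $\lambda_F$), itself forced by full-dimensionality of $D$ — needs to be captured. A second, more technical point is that $\theta$ is controlled only Lebesgue-a.e., so the curvature must be extracted from genuine volume integrals over the slabs above rather than from integrals of $\theta$ over the measure-zero kink facets.
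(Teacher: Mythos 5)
Your route is sound and genuinely different from the one behind the statement. Note first that the paper gives no proof of Theorem~\ref{TheoremSCExp} at all --- it is quoted from Schultz \cite{S94} --- and that proof, like its descendants in this paper (the gradient-difference formula \eqref{gradient_formula} and the cylinder constructions in Theorems~\ref{theorem2} and \ref{theorem3}), runs through the vertex description of Lemma~\ref{lemma0}: decompose $\mathbb{R}^s$ into the cones $K_i$, express increments of the gradient via measures of set differences, and exhibit inside $V+B_\rho(0)$ a cylindrical set of Lebesgue measure proportional to $\|x-y\|$, with uniformity over directions coming from Lemma~\ref{lemma0}(iii). You instead reduce to monotonicity of directional derivatives (the same reduction as Lemma~\ref{lemma2}), disintegrate along lines parallel to the direction of motion, and use the shift identity $\int_{\mathbb{R}}[\eta_w(\sigma)-\eta_w(\sigma-L)]\,d\sigma = L\,\Delta_d$ for the bounded nondecreasing slope functions $\eta_w$, with uniformity supplied by the width $\Delta_d \geq 2\rho_D > 0$ of the full-dimensional dual polytope $D$ (your identification of $D$ as a polytope via A1/A2 is correct). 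For the expectation functional this is shorter, avoids enumerating vertices and adjacency structure, and produces an explicit modulus; the price is that it leans on positive homogeneity of $\varphi$ (full increase $\Delta_d$ along every slicing line), which is precisely what breaks for the truncated sets $M_i(x,\eta)$ of $Q_{EE}$, where the paper's cone/cylinder geometry remains usable.

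There is one step that fails as literally written, and you flag it yourself: ``choosing the window and $\|w\| \leq \delta$ small enough \ldots the full increase $\Delta_d$ is captured'' is false in general, since by homogeneity the breakpoints of $\eta_w$ sit at $\sigma = \|w\|\hat{\sigma}_i(\hat{w})$ and $\hat{\sigma}_i(\hat{w})$ blows up as the slicing line becomes parallel to a kink facet. However, neither of your proposed repairs (compactness over the normal fan, positive definiteness of a curvature matrix) is needed; a two-line subgradient comparison captures half of $\Delta_d$ uniformly. Let $B_{\rho_D}(v_0) \subseteq D$, set $c := \rho_D/(4\operatorname{diam} D)$, and take $\|w\| \leq c\delta$ and $\sigma \geq \delta/2$. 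Since $\partial\varphi(p) = \arg\max_{v \in D} v^\intercal p$, any $v^* \in \partial\varphi(w+\sigma d)$ satisfies $v^{*\intercal}(w+\sigma d) \geq v_d^\intercal (w+\sigma d)$ for $v_d \in \arg\max_{v\in D} v^\intercal d$, whence
\begin{align*}
\eta_w(\sigma) \; \geq \; v^{*\intercal} d \; \geq \; \max_{v \in D} v^\intercal d - \frac{\operatorname{diam}(D)\,\|w\|}{\sigma} \; \geq \; \max_{v \in D} v^\intercal d - 2c\operatorname{diam}(D),
\end{align*}
and symmetrically $\eta_w(\sigma) \leq \min_{v \in D} v^\intercal d + 2c\operatorname{diam}(D)$ for $\sigma \leq -\delta/2$. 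Hence for $0 < L \leq \delta/2$ one gets $\eta_w(\delta - L) - \eta_w(-\delta) \geq \Delta_d - 4c\operatorname{diam}(D) \geq \Delta_d - \rho_D \geq \Delta_d/2 \geq \rho_D$, uniformly in the unit direction $d$ and in $\hat{w}$. Truncating your inner integral to $[-\delta,\delta]$ (the integrand is nonnegative, so this only decreases it) yields $\int_{-\delta}^{\delta} [\eta_w(\sigma) - \eta_w(\sigma - L)]\,d\sigma \geq L\,\big(\eta_w(\delta-L) - \eta_w(-\delta)\big) \geq L\,\rho_D$, and choosing $\delta$ so that the slab $\{a + w + \sigma d \; : \; \|w\| \leq c\delta, \; |\sigma| \leq 2\delta\}$ lies in $B_\rho(a) \subseteq V + B_\rho(0)$ (e.g.\ $\delta = \rho/3$) gives $G_d'(\alpha + L) - G_d'(\alpha) \geq r\,\rho_D\,\lambda_{s-1}\big(B_{c\delta}(0) \cap d^\perp\big)\,L$ for all small $L$; telescoping along the segment, as you describe, finishes the proof. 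Your second worry is indeed harmless: all estimates are volume integrals over slabs, so the a.e.\ lower bound of A4 suffices, and only one-sided slopes $\eta_w(\pm\delta)$, which exist everywhere, are used --- no pointwise gradients on the kink set are needed.
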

Remember that a real-valued function $f$ on some convex subset $V$ of
a normed space is called $\kappa$-strongly convex on that set if for all 
$x,y \in V$ and all 
$\lambda \in \left( 0,1 \right)$ it holds
\begin{align*}
f(\lambda x + (1-\lambda) y ) \leq \lambda f(x) + (1-\lambda) f(y) - 
\frac{\kappa}{2}\lambda(1-\lambda)\|x-y\|^2.
\end{align*}
We point out that a constant of strong convexity for $Q_\mathbb{E}$ in Theorem \ref{TheoremSCExp}
can be computed from the model data, i.e. the geometry
of the set $\{ v \ | W^\intercal v \leq q\}$, $\rho$ and $r$.\\
In \cite{CSS17} the analysis of \eqref{QExpectation} was extended to the upper semideviation based functional 
\begin{align*}
Q_{\mathcal{D}_+}(x)  
= Q_\mathbb{E}(x) + \mathbb{E}_\mu[ \max\{ 0, \varphi(z-x) - Q_\mathbb{E}(x) \} ]
\end{align*}
and the expected-excess based one
\begin{align*}
Q_{EE}(x,\eta)  
= \mathbb{E}_\mu [ \max\{ 0, \varphi(z-x) - \eta \} ]
\end{align*}
which, for simplicity,  we shall call upper semideviation and expected excess repectively.
For the latter one an additional assumption A5 on the magnitude of $\eta$ 
is needed because if $\eta$ is too big, $Q_{EE}(x,\eta)$ might not even depend on $x$ anymore.

\subsection{On strong convexity of $Q_{EE}(\cdot,\eta)$ for fixed $\eta$}

In order to formulate condition A5 we note the following properties of the value function $\varphi$ and its linearity complex (cf. Lemma 32 and 34 in \cite{CSS17}): 
\begin{lemma} \label{lemma0}
Assume A1 and A2. Then $\{ v \ | \ W^\intercal v \leq q \}$ is the convex hull of its finitely many extreme points
$\{ d_i \ | \ i \in I = \{ 1,\ldots,N \} \}$ and $\varphi$ has the following properties:
\begin{itemize}
\item[(i)] $\varphi(t) = \max_{i \in I} d_i^\intercal t$,  $\varphi(t) = d_i^\intercal t$  for all $t \in K_i = \{ z \ | \ (d_i - d_j)^\intercal z \geq 0 \text{ for all } j \in I  \}$, i.e. $\varphi$ is finite and polyhedral.
\item[(ii)] $\bigcup_{i \in I} K_i = \mathbb{R}^s$ with each $K_i$ being a $s$-dimensional, pointed polyhdral cone. 
Furthermore each $K_i \cap K_j$ with $i \neq j$ is a common closed face of $K_i$ and $K_j$ and it holds
$\text{dim}(K_i \cap K_j) = s-1$ if and only if $d_i$ and $d_j$ are adjacent. 
\item[(iii)] There is some $\alpha > 0$ such that 
\begin{align*}
\inf_{u \in K_i} \sup_j (d_i - d_j)^\intercal u \geq \alpha \|u\|.
\end{align*}
\end{itemize}
\end{lemma}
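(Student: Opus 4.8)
The plan is to pass to the linear programming dual of \eqref{valuefunction}. By LP strong duality, whenever both programs are feasible,
\[
\varphi(t) = \max\{t^\intercal v \mid W^\intercal v \leq q\} = \max\{t^\intercal v \mid v \in D\},
\]
where $D := \{v \mid W^\intercal v \leq q\}$. First I would record that A1 and A2 secure finiteness: A1 (complete recourse) makes the primal feasible for every $t$, so $\varphi(t) < +\infty$, while A2 exhibits a dual-feasible point, so $\varphi(t) > -\infty$, and strong duality applies. The key structural observation is that $D$ is a full-dimensional polytope. Full-dimensionality is immediate from A2, since $W^\intercal v < q$ means $v$ is an interior point of $D$. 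For boundedness I would argue via the recession cone: if $W^\intercal w \leq 0$ then $w^\intercal(W e_j) \leq 0$ for every column of $W$; since A1 forces the columns of $W$ to positively span $\mathbb{R}^s$, this gives $w^\intercal u \leq 0$ for all $u \in \mathbb{R}^s$, hence $w = 0$. Thus $D$ has trivial recession cone, is bounded, and therefore equals the convex hull of its finitely many extreme points $\{d_i \mid i \in I\}$.

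With this in hand, (i) and (ii) become the standard correspondence between the support function of a polytope and its normal fan. Since $\varphi$ is the support function $h_D$ of $D$, the maximum of the linear map $v \mapsto t^\intercal v$ over the polytope $D$ is attained at a vertex, giving $\varphi(t) = \max_{i \in I} d_i^\intercal t$; the set on which $d_i$ is a maximizer is exactly $K_i = \{z \mid (d_i - d_j)^\intercal z \geq 0 \ \forall j\}$, which is the normal cone $N_D(d_i)$, and $\varphi$ is finite and polyhedral as a finite maximum of linear functions. For (ii) I would invoke the theory of the normal fan: the normal cones at the vertices of a full-dimensional polytope are $s$-dimensional, pointed, cover $\mathbb{R}^s$, and form a complete polyhedral fan, so that any two of them meet in a common face; the face-dimension duality between $D$ and its normal fan identifies the $(s-1)$-dimensional intersections $K_i \cap K_j$ precisely with the edges of $D$, that is, with adjacent pairs $d_i, d_j$.

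The main technical step is (iii), where I would prove a uniform quantitative bound. Writing $g_i(u) := \max_j (d_i - d_j)^\intercal u$, which is continuous, positively homogeneous of degree one, and nonnegative on $K_i$, it suffices by homogeneity to bound $g_i$ from below on the compact set $S_i := \{u \in K_i \mid \|u\| = 1\}$. The crucial claim is that $g_i(u) > 0$ for every $0 \neq u \in K_i$: if $g_i(u) = 0$ then $(d_i - d_j)^\intercal u \leq 0$ for all $j$, and combined with $u \in K_i$ this forces $(d_i - d_j)^\intercal u = 0$ for all $j$, i.e. $u \perp \operatorname{span}\{d_i - d_j \mid j \in I\}$; but full-dimensionality of $D$ makes this span all of $\mathbb{R}^s$, so $u = 0$. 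Continuity of $g_i$ on the compact set $S_i$ then yields $\alpha_i := \min_{S_i} g_i > 0$, and setting $\alpha := \min_{i \in I} \alpha_i > 0$ and rescaling gives $g_i(u) \geq \alpha \|u\|$ for all $u \in K_i$. I expect the boundedness argument for $D$ and the strict-positivity-plus-compactness argument for (iii) to be the only places where A1 and A2 are genuinely used, the remainder being bookkeeping about the normal fan.
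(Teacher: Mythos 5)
Your proposal is correct, and it is essentially the argument behind this lemma: the paper itself gives no inline proof but defers to Lemmas 32 and 34 of \cite{CSS17}, which proceed exactly along your route --- pass to the LP dual (finite by A1 plus weak duality from A2, full-dimensional by the strict inequality in A2, bounded because A1 makes the columns of $W$ positively span $\mathbb{R}^s$, killing the recession cone), identify $\varphi$ with the support function of the polytope $D$ and the $K_i$ with the normal cones at its vertices, and obtain the quantitative bound (iii) by positive homogeneity and compactness of $K_i$ intersected with the unit sphere, with strict positivity on that set coming from $\operatorname{span}\{d_i - d_j \mid j \in I\} = \mathbb{R}^s$. One tiny remark: full-dimensionality of $D$ (i.e.\ A2) is also what makes the normal cones $K_i$ pointed in (ii), so A2 enters in slightly more places than your closing sentence suggests, but this is bookkeeping rather than a gap.
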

Let us fix some more notation:\\
Since each $K_i$ is a polyhedral cone, we can write it as the conic hull of its finitely many extreme-rays, i.e. 
$K_i = \text{cone}\{ t^i_1,\ldots,t^i_{n_i} \}$. With shorthand $K_i^+ = K_i \cap \{ z \ | \ d_i^\intercal z \geq 0 \}$
and $I^+ = \{ i \in I \ | \ \text{int}K_i^+ \neq \emptyset \text{ and } d_i \neq 0\}$ we note that for $\eta_0 > 0$ and $i \in I^+$ it holds
that the hyperplane $\{ z \ | \  d_i^\intercal z = \eta_0 \}$ intersects at least one extreme ray of $K_i$ in a single point:
\begin{align*}
\{ r t^i_{s} \ | \ r \geq 0 \} \cap \{ z \ | \ d_i^\intercal z = \eta \} = \{ \hat{y}^i_s \}
\end{align*}
for at least one $s \in \{ 1,\ldots,n_i \}$. Let $\hat{y}^i(\eta_0)$ denote one with minimum norm.
Theorem 35 in \cite{CSS17} can then be formulated as this:
\begin{theorem}\label{theorem0}
Let A1-A4 hold. In addition assume
\begin{itemize}
\item[A5] $\eta_0$ is such that for all $i \in I^+$ we have $\|\hat{y}^i(\eta_0)\| 
< \rho$ (where $\rho$ is the one given in A4).
\end{itemize}
Then $Q_{EE}(x,\eta)$ is strongly convex on $V$ (cf. A4 for the definition of $V$) with respect to 
$x$ for all $\eta \leq \eta_0$. The modulus of strong convexity does not depend on $\eta$.
\end{theorem}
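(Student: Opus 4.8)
The plan is to prove the stronger statement that there is a constant $\kappa>0$ depending only on $\alpha$ (from Lemma \ref{lemma0}(iii)), $r$ and $\rho$ (from A4), but \emph{not} on $\eta$, such that $Q_{EE}(\cdot,\eta)-\tfrac{\kappa}{2}\|\cdot\|^2$ is convex on $V$ for every $\eta\le\eta_0$. Since $Q_{EE}(\cdot,\eta)$ is already convex (for fixed $z$ the integrand $x\mapsto\max\{0,\varphi(z-x)-\eta\}$ is convex, and integration preserves convexity) and, under A1--A4, finite and continuous, it suffices by a standard argument to verify midpoint strong convexity with a uniform modulus: for all $x,y\in V$,
$$Q_{EE}(x,\eta)+Q_{EE}(y,\eta)-2\,Q_{EE}\!\left(\tfrac{x+y}{2},\eta\right)\ \ge\ \tfrac{\kappa}{4}\|x-y\|^2.$$
First I would fix $x,y\in V$, set $u=y-x$, $e=u/\|u\|$, and slice space along $e$: writing $z=w+se$ with $w\in e^{\perp}$ and $s\in\mathbb{R}$, Fubini turns $Q_{EE}$ into an integral over the hyperplane $e^{\perp}$ of one-dimensional integrals, reducing the whole estimate to a curvature bound on lines parallel to $u$.

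On each such line the integrand $g_z(\cdot)=\max\{0,\varphi(z-\cdot)-\eta\}$ is a finite convex piecewise-linear function of the scalar parameter, by Lemma \ref{lemma0}(i); its distributional second derivative is a nonnegative sum of point masses located at the \emph{kinks}, namely where $z-x$ crosses a facet $K_i\cap K_j$ of the linearity complex and where $\varphi(z-x)$ crosses the threshold value $\eta$. The midpoint second difference of a convex function equals the integral of this curvature measure against the nonnegative triangular kernel $\min\{\lambda,1-\lambda\}$, so the task becomes accumulating, over the positive-density region, the slope jumps picked up along the segment. The jump produced when the segment meets the face between adjacent cones $d_i,d_j$ is $|(d_i-d_j)^{\intercal}u|$, and Lemma \ref{lemma0}(iii) guarantees that along any direction this is bounded below by $\alpha\|u\|$ for a suitable face; this is the ingredient that makes the directional curvature lower bound uniform over all $u$.

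The decisive role of A4 and A5 enters when integrating these jumps against the density. The $z$-values for which the segment $[x,y]$ sweeps a given kink surface form a slab whose transverse extent is proportional to $\|x-y\|$; multiplying the per-line jump ($\sim\alpha\|u\|$) by the density lower bound $r$ over such a slab produces the required $\|x-y\|^2$ scaling. Condition A5 is exactly what guarantees that, for every $\eta\le\eta_0$ and every shift $x\in V$, the relevant kink surfaces --- in particular the threshold surface $\{z:\varphi(z-x)=\eta\}$, whose position is controlled through the points $\hat{y}^i(\eta_0)$ --- remain inside $V+B_\rho(0)$, where A4 provides $\theta\ge r$. Because the resulting bound involves only $\alpha$, $r$ and $\rho$, it is independent of $\eta$, which is precisely the uniformity claimed.

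I expect the main obstacle to be the geometric bookkeeping of this last step: parametrising the sweep of the curvature-carrying surfaces as $x$ ranges over the segment, verifying via A5 that these surfaces stay within $V+B_\rho(0)$ simultaneously for all $\eta\le\eta_0$, and confirming that the transverse slab thickness genuinely supplies the second factor of $\|x-y\|$ rather than being lost. A secondary technical point is the interplay between the threshold kink and the interior facets of $\varphi$: one must ensure that, whichever of these supplies the curvature for a given direction $u$, the lower bound degrades neither to $0$ nor to a quantity depending on $\eta$. Here the restriction to the index set $I^{+}$ (where $d_i\neq0$ and $\mathrm{int}\,K_i^{+}\neq\emptyset$) together with the norm control $\|\hat{y}^i(\eta_0)\|<\rho$ are what keep the estimate honest.
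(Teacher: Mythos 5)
Your overall strategy is sound and genuinely different in machinery from the paper's: the paper (Theorem 35 of \cite{CSS17}, whose technique is displayed here in the proof of Theorem \ref{theorem2}) works with the $C^1$ gradient of $Q_{EE}$ and establishes strong monotonicity, $[Q_{EE}'(x+u,\eta)-Q_{EE}'(x,\eta)](u,0)\geq \kappa\|u\|^2$, by rewriting the gradient difference as a Riemann--Stieltjes integral and, after integration by parts, as the integral \eqref{gradient_formula} of $\mu$-measures of set differences, which are then bounded below by explicit cylindrical sets of Lebesgue measure at least proportional to $\|u\|$ inside $V+B_\rho(0)$. Your route instead verifies the midpoint inequality directly via Fubini slicing and one-dimensional curvature measures (slope jump times triangular kernel), and your accounting of the two factors of $\|u\|$ --- one from the jump $(d_i-d_j)^\intercal u \geq \alpha\|u\|$ of Lemma \ref{lemma0}(iii), one from the transverse thickness of the slab swept by the kink --- is correct and would deliver the $\|x-y\|^2$ scaling. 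The two approaches use the same three quantitative ingredients ($\alpha$ from Lemma \ref{lemma0}(iii), the density bound $r$ on $V+B_\rho(0)$, and A5), so in that sense yours is a legitimate repackaging rather than a new idea.

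However, there is a genuine gap at exactly the step you defer to ``geometric bookkeeping''. Your claim that A5 keeps the relevant kink surfaces inside $V+B_\rho(0)$ \emph{simultaneously for all} $\eta\leq\eta_0$ is false as stated for the facet kinks: A5 controls only one extreme ray per cone ($\|\hat{y}^i(\eta_0)\|<\rho$), so for $\eta$ close to $\eta_0$ the above-threshold portion $F^+_{i,j}(x,\eta)$ of the particular facet singled out by Lemma \ref{lemma0}(iii) may lie entirely outside $B_\rho(x)$ --- the $\alpha\|u\|$-jump kink is clipped away precisely where you need it. This is why the paper's proof makes a two-regime case distinction: for $\eta\leq\eta^-$, with $\eta^-$ chosen via \eqref{eta_minus} so that \emph{every} facet of $K_i$ reaches height $\eta^-$ inside $B_\rho$, it uses facet kinks, while for $\eta^-<\eta\leq\eta^+=\eta_0$ it switches to the threshold surface $F_{i,0}(x,\eta)$ near $\hat{y}^i(\eta)$, whose slope jump is $d_i^\intercal u$, \emph{not} $\alpha\|u\|$. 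That second mechanism needs its own uniform lower bound: without A6 one can have $d_i^\intercal u$ arbitrarily small for $u\in K_i$ (directions nearly tangent to the level set), so for fixed $\eta$ you must additionally exploit the freedom to replace $u$ by $-u$ (available since $\nu=0$) together with the $I^{++}/I^{\pm}$ splitting used in Theorem \ref{theorem3}; your sketch offers no substitute for this. Finally, your parenthetical claim that $\kappa$ depends only on $\alpha$, $r$ and $\rho$ is too strong: the base areas of the slabs/cylinders degenerate as $\eta_0$ approaches the largest value permitted by A5 (i.e.\ as $\rho-\|\hat{y}^i(\eta_0)\|\to 0$), and Example 2 of the paper shows the modulus must depend on $\eta_0$; the theorem asserts only independence of $\eta\leq\eta_0$.
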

The geometric situation is shown in Fig. 1.\\
In A5 it is in fact enough to show that for every $i \in I^+$ it holds 
$\|\hat{y}^i(\eta_0)\| < \rho$ or if there exist an index set $J_i \subset I$ such that $-K_i \subset \bigcup_{j \in J_i} K_j^+$
it holds $\|\hat{y}^j(\eta_0)\| < \rho$ for all $j \in J_i$. In this paper we shall 
use the slightly less general version of A5.
\begin{figure}
  \caption{The geometry of the cones $K_i$}
  \centering
    \includegraphics[width=1.0\textwidth, trim={0 6cm 0 6cm},clip]{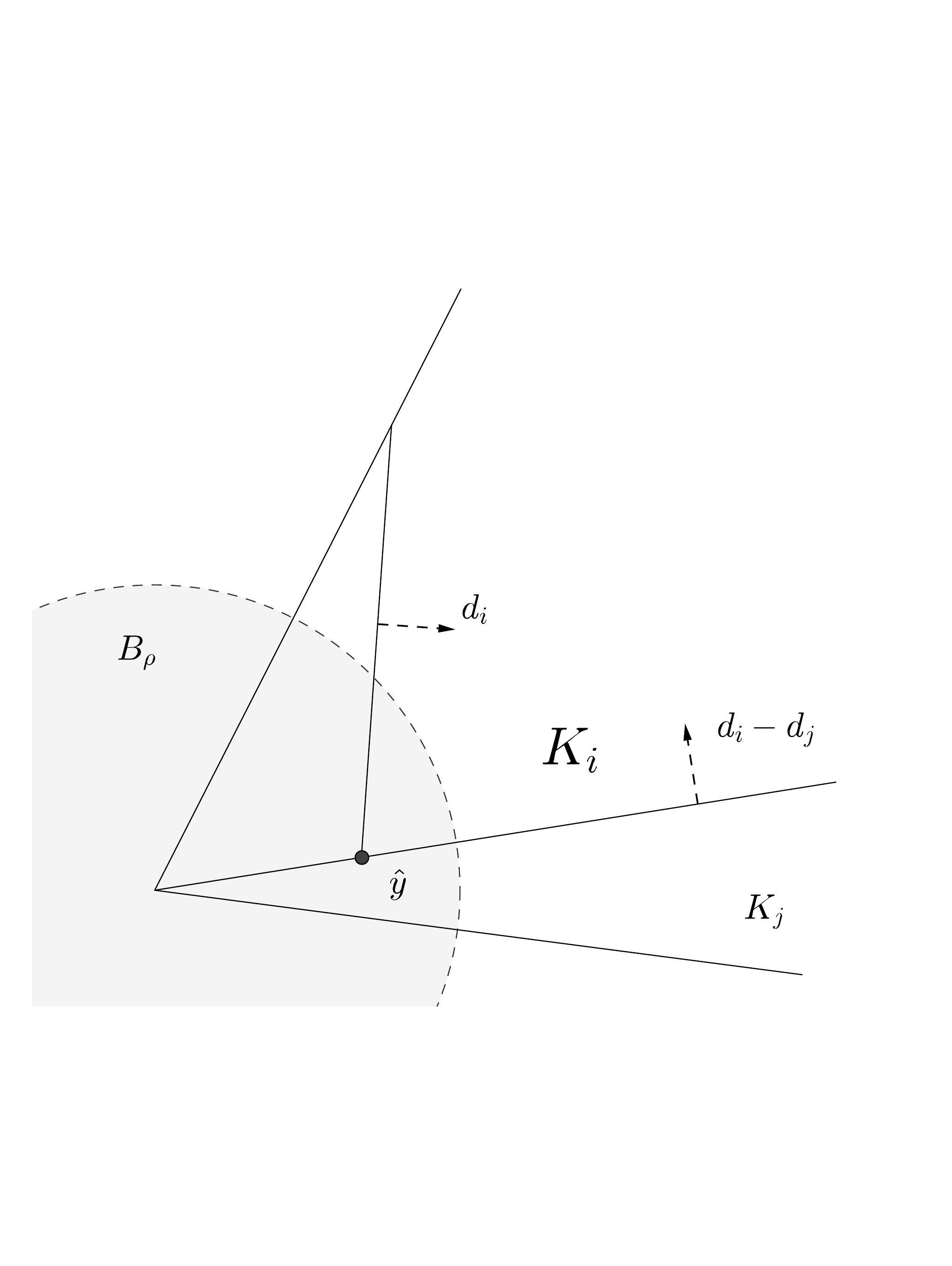}
\end{figure}
\newpage

Let us make three remarks on the theorem:\\
Firstly, it is desirable to verify condition A5 for $\eta_0$ as large as possible
(especially when considering Theorem \ref{theorem2} later).
The larger $\rho$ and $\|d_i\|$ with $i \in I^+$, the larger $\eta_0$ can be chosen.\\
Secondly, condition A5 might not be fulfilled on the entire set $V$.
By considering a subset $U \subset V$ instead, allowing for a possibly bigger value of
$\rho$, one can ensure that condition A5 holds on $U$ at least.
Hence, strong  convexity of $Q_{EE}(\cdot, \eta)$ can be shown on a smaller set. 
We shall demonstrate these two remarks in Example 1 below. \\ 
Thirdly, a modulus of strong convexity $\kappa$ can be computed in terms of model-data directly
with the techniques employed in \cite{CSS17}. It depends on $\rho$, the shape of 
$\{ W^\intercal v \leq q\}$, the lower bound of $\mu$'s density $r$ and $\eta_0$
(all subsumized as ''model data''). There is a certain trade-off between how big $\kappa$
can be and how $\eta_0$ is chosen: The bigger $\eta_0$, the smaller $\kappa$.
Example 2 illustrates this fact.\\
In Theorem \ref{theorem2} and Theorem \ref{theorem3} we will calculate moduli of strong convexity for $Q_{EE}$ in a more general setting than in Theorem \ref{theorem0}.

\begin{example}
Consider $\varphi(t) = \max\{ 0, \alpha \, t \}$ and $\mu =\mathbb{U}_{| (-\rho,1+\rho)}$.
For arbitrary $x \in V = (0,1)$ we compute for $\eta, \alpha, \rho > 0$
\begin{align*}
Q_{EE}(x) = 
\begin{cases}
0 \text{ for } x \geq 1 + \rho-\alpha^{-1}\eta \\
\frac{1}{2 \, \alpha} [ \alpha(1 + \rho -x)-\eta ]^2 , 
\text{ else.}
\end{cases}
\end{align*}
We see that $Q_{EE}(\cdot,\eta)$ is strongly convex as long as the condition
$0 < x < \min\{ 1+\rho-\alpha^{-1}\eta, 1 \}$ is fulfilled.
In notation of A5 there is only one cone $K_i^+$ with $i \in I^+$ which 
is $K = \mathbb{R}_{\geq 0}$ corresponding to $d = \alpha$.
We get $\hat{y} = \alpha^{-1}\eta_0$ and condition A5 reads $\alpha^{-1}\eta_0 < \rho$ so
that $Q_{EE}(\cdot,\eta)$ is strongly convex on $V$ whenever $\eta < \alpha \rho$.
If $\rho < \frac{1}{2}$ we may consider the set $U = (\rho, 1-\rho) \subset V$ to get
strong convexity for larger $\eta$ than before. We can set as the new 
$\tilde{\rho} = 2\rho$ and see that $Q_{EE}(\cdot,\eta)$ is strongly
convex on $U$ for all $\eta < 2 \alpha \rho$. 
\end{example}

Note that the modulus of strong convexity does not depend 
on $\rho$ or $\eta$  which in higher dimensional cases cannot be expected:

\begin{example}
Let $\varphi(t) = \max\{ 0,t_1,t_2 \}$, $\mu = \mathbb{U}_{ \left(0,1\right) }$.\\
We shall also assume $0< x_2 < x_1$, $\eta > 0$ and $x_1 + \eta \leq 1$ when computing
\begin{align*}
Q_{EE}(x_1,x_2) &= \frac{1}{2}x_2(1-x_1-\eta)^2 + \frac{1}{2}x_1(1-x_2-\eta)^2 
+ \frac{2}{3}(1-x_1-\eta)^3 \\ 
&+ \eta (1-x_1-\eta)^2 + \frac{1}{2}(1-x_1) [ (1-x_2-\eta)^2 - (1-x_1-\eta)^2 ].
\end{align*}
For the tedious computation we refer to the appendix.
The components of the Hessian $\mathcal{H}$ of $Q_{EE}$ are
\begin{align*}
\frac{\partial^2}{\partial x_1^2}Q_{EE}(x_1,x_2) &= x_2 - x_1 + 1 \\
\frac{\partial^2}{\partial x_2^2}Q_{EE}(x_1,x_2) &= 1 \\
\frac{\partial^2}{\partial x_1 \partial x_2}Q_{EE}(x_1,x_2) &= x_1 + \eta - 1.
\end{align*}
If $\eta \rightarrow 1-$, we can choose $x_1$ close to $1$ and $x_2$ close to $0$ which gives
$\text{det}\mathcal{H}_{Q_{EE}} \rightarrow 0+$. Since the determinant of the Hessian
is equal to the product of its Eigenvalues we see that at least one of them approaches $0$ so 
that the modulus of strong convexity must depend on the choice of $\eta_0$ given in condition A5.
\end{example}

\subsection{Variational representation of CVaR}
In section 3 we shall consider the Conditional Value-at-Risk at a confidence level $\alpha \in (0,1)$, 
which can be characterized by a minimisation problem in terms of $Q_{EE}$:
\begin{equation}\label{varrep}
Q_{\alpha CVaR}(x) = \min_{\eta \in \mathbb{R}} \big\{  \eta + \frac{1}{\alpha} Q_{EE}(x,\eta) \big\}.
\end{equation}
For smooth distributions this representation is due to \cite[Theorem 1]{UR99}, while \cite[Theorem 10]{UR02} covers the general case and shows that the Value-at-Risk
$$
Q_{\alpha VaR}(x) = \inf \Big\{ t \in \mathbb{R} \; | \; \mu(\lbrace z \; | \; \varphi(z-x) \leq t \rbrace) \geq 1-\alpha \Big\}
$$
is an optimal solution to the above minimisation problem. Thus,
\begin{align}\label{varrep2}
Q_{\alpha CVaR}(x) = Q_{\alpha VaR}(x) + 
\frac{1}{\alpha} Q_{EE}(x,Q_{\alpha VaR}(x)).
\end{align}
We shall however favor working with representation \eqref{varrep} due to inconvenient properties of $Q_{\alpha VaR}$, 
i.e. absence of convexity. 
While it is straightforward to show convexity of $Q_{\alpha CVaR}$ through \eqref{varrep} essentially due to
joint convexity of $Q_{EE}$ in both arguments, strong convexity
does not follow trivially even if $Q_{EE}(x,\eta)$ is strongly convex in $x$ with strong convexity constant
not depending on $\eta$. A property that ensures strong convexity of $Q_{\alpha CVaR}$ can be defined as follows:

\begin{definition}\label{def1}
Let $V \subset \mathbb{R}^n$ and $W \subset \mathbb{R}^m$ nonempty and convex.\\
A function $f: V \times W \rightarrow \mathbb{R}$ is called partially 
$\kappa$-strongly convex with respect to its first argument if
\begin{align}\label{PSC}
f( \lambda(x_1,y_1) + (1-\lambda)(x_2,y_2 ))
\leq \lambda f(x_1,y_1) + (1-\lambda) f(x_2,y_2) - \frac{\kappa}{2}\lambda
(1-\lambda)\|x_1 - x_2\|^2
\end{align}
holds for all $x_1,x_2 \in V, \, y_1,y_2 \in W, \, 0 < \lambda < 1$.
\end{definition}
\begin{lemma}\label{lemma2}
If $f$ (as above) is continuously differentiable then partial strong convexity is equivalent to
\begin{align}\label{MonMap}
[ f'(x_1,y_1) - f'(x_2,y_2) ]((x_1,y_1)-(x_2,y_2)) \geq 
\kappa \|x_1 - x_2\|^2
\end{align}
for all $x_1,x_2 \in V, \, y_1,y_2 \in W, \, 0 < \lambda < 1$.
\end{lemma}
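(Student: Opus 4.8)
The plan is to reduce the claimed equivalence to the classical first-order (monotonicity) characterisation of convexity of a differentiable function, by subtracting off a quadratic that exactly ``absorbs'' the partial strong convexity.

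First I would introduce the auxiliary function $g : V \times W \to \mathbb{R}$, $g(x,y) = f(x,y) - \frac{\kappa}{2}\|x\|^2$, which is again continuously differentiable on the convex set $V \times W$. Writing $z = (x,y)$, the first step is to show that $f$ satisfies \eqref{PSC} if and only if $g$ is convex. This follows by inserting $z_1 = (x_1,y_1)$ and $z_2 = (x_2,y_2)$ into the defining inequality of convexity of $g$ and using the elementary identity
\[
\lambda\|x_1\|^2 + (1-\lambda)\|x_2\|^2 - \|\lambda x_1 + (1-\lambda)x_2\|^2 = \lambda(1-\lambda)\|x_1 - x_2\|^2 .
\]
Since the subtracted quadratic depends on $x$ only, the $y$-components play no role and the correction term produced is precisely $\frac{\kappa}{2}\lambda(1-\lambda)\|x_1 - x_2\|^2$, which is exactly the defect appearing in \eqref{PSC}. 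Hence $f$ partially $\kappa$-strongly convex $\Leftrightarrow$ $g$ convex.

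Second, I would invoke the standard equivalence between convexity of a continuously differentiable function on a convex set and monotonicity of its gradient: $g$ is convex if and only if $[g'(z_1) - g'(z_2)](z_1 - z_2) \geq 0$ for all $z_1,z_2 \in V \times W$. If a self-contained argument is wanted, this is the usual two-line reasoning: convexity gives the subgradient inequality by letting $t \to 0^+$ in the difference quotient along the segment $[z_1,z_2]$, adding this inequality at $z_1$ and at $z_2$ yields monotonicity, and conversely monotonicity makes $t \mapsto g\big(z_1 + t(z_2-z_1)\big)$ have nondecreasing derivative and thus be convex on $[0,1]$, so $g$ is convex along every segment.

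Finally I would translate the monotonicity of $g'$ back to $f$. Since the correction only hits the first block, $g'(x,y) = f'(x,y) - \kappa\,(x,0)$ with $(x,0) \in \mathbb{R}^n \times \mathbb{R}^m$, and using $[(x_1-x_2,0)](z_1 - z_2) = \|x_1 - x_2\|^2$ one obtains
\[
[g'(z_1) - g'(z_2)](z_1 - z_2) = [f'(z_1) - f'(z_2)](z_1 - z_2) - \kappa\,\|x_1 - x_2\|^2 .
\]
Thus monotonicity of $g'$ is literally condition \eqref{MonMap}, and chaining the three equivalences proves the lemma. I do not expect a genuine obstacle; the only point requiring a little care is that the quadratic correction lands solely on the $x$-block, so that the monotonicity defect yields $\|x_1 - x_2\|^2$ rather than the full $\|z_1 - z_2\|^2$ — which is precisely what turns the statement into a \emph{partial}, rather than full, strong-convexity characterisation.
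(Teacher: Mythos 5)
Your proof is correct, but it takes a genuinely different route from the paper's. The paper follows the classical Ortega--Rheinboldt pattern directly: it proves that \eqref{MonMap} implies the first-order growth inequality
\begin{equation*}
f(z_2) - f(z_1) \geq f'(z_1)(z_2 - z_1) + \tfrac{\kappa}{2}\|x_2 - x_1\|^2
\end{equation*}
by telescoping along the segment $[z_1,z_2]$ with the mean-value theorem and passing to the limit in a Riemann-sum estimate, and then applies this inequality twice at the convex combination $z = \lambda z_1 + (1-\lambda)z_2$ to obtain \eqref{PSC}; notably, the appendix only writes out this one implication, whereas the lemma asserts an equivalence. Your reduction via $g(x,y) = f(x,y) - \tfrac{\kappa}{2}\|x\|^2$ avoids all of that machinery: the polarization identity $\lambda\|x_1\|^2 + (1-\lambda)\|x_2\|^2 - \|\lambda x_1 + (1-\lambda)x_2\|^2 = \lambda(1-\lambda)\|x_1-x_2\|^2$ converts \eqref{PSC} exactly into convexity of $g$, the classical gradient-monotonicity characterisation handles $g$, and the block structure of the correction turns monotonicity of $g'$ literally into \eqref{MonMap} --- so both directions of the equivalence come out simultaneously and for free. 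What the paper's approach buys is self-containedness (it reproves the needed piece of the classical theory rather than citing it, which is presumably why the authors chose it ``for the reader's convenience''); what yours buys is brevity, a complete two-sided proof, and a transparent explanation of why the defect is $\|x_1-x_2\|^2$ rather than $\|z_1-z_2\|^2$, namely that the quadratic shift acts on the $x$-block only. The one hypothesis your argument uses that deserves an explicit word is that $\|\cdot\|$ is induced by an inner product (the identity above fails for general norms); since the paper works with the Euclidean norm on $\mathbb{R}^n$, this is harmless here.
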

Although the proof is virtually the same as for strong convexity (cf. \cite{RhOr70})
in both arguments, we shall give a variant of the proof in for the reader's convenience in the appendix.\\

For the moment let us assume that $Q_{EE}(x, \eta)$ is partially strongly convex with respect to $x$ 
on some set $V \times W \subset \mathbb{R}^s \times \mathbb{R}$.
The following simple calculation shows that $Q_{\alpha CVaR}$  is strongly convex on $V$ with modulus $\frac{\kappa}{\alpha}$ if $Q_{\alpha VaR}(V) \subset W$.

For any $\lambda \in [0,1]$ and $x_1, x_2 \in V$ set 
$\eta_i = Q_{\alpha VaR}(x_i) \in W$, $i=1,2$. Then
\begin{align*}
&Q_{\alpha CVaR}(\lambda x_1 + (1-\lambda)x_2) \; = \; 
\min_{\eta \in \mathbb{R}} \; \big\{ \eta + 
\frac{1}{\alpha} Q_{EE}(\lambda x_1 + (1-\lambda)x_2, \eta) \big\} \\
\leq \; & \lambda \eta_1 + (1-\lambda)\eta_2 + 
\frac{1}{\alpha} 
Q_{EE}(\lambda x_1 + (1-\lambda)x_2, \lambda \eta_1 + (1-\lambda)\eta_2) \\
\leq \; &\lambda \, [\eta_1  + \frac{1}{\alpha}Q_{EE}(x_1, \eta_1)] + 
(1-\lambda) \, [ \eta_2  + \frac{1}{\alpha} Q_{EE}(x_2,\eta_2) ] - \frac{\kappa}{\alpha} \lambda (1-\lambda) \|x_1 - x_2 \|^2 \\
= \; &\lambda \, Q_{\alpha CVaR}(x_1) + (1-\lambda) \, Q_{\alpha CVaR}(x_2) - \frac{\kappa}{\alpha} \lambda (1-\lambda) \|x_1 - x_2 \|^2,
\end{align*}
where the second inequality follows from $Q_{EE}$ being partially strongly convex.\\ 

One might hope that conditions A1-A5 suffice to prove partial strong convexity for $Q_{EE}$. It turns out, that
this is not true:

\begin{example}
As a counterexample consider
\begin{align*}
Q_{EE}(x,\eta) = \int_\mathbb{R} \max\{0 , \varphi(z-x)-\eta \} \, \mu(\text{d}z)
\end{align*}
with $\varphi(t) = \max\{0,t\}$ and $\mu = \mathbb{U}_{| \, \left(0,1\right)}$ (i.e. uniformly distributed on 
$\left( 0,1 \right))$. Let $V \subset \left( 0,1 \right)$ so that conditions $A1-A4$ are satisfied.
Also choose $\eta_0 > 0$ so that A5 holds.
In case $\eta_0 \geq \eta > 0$ we get
\begin{align*}
Q_{EE}(x,\eta) &= \int_x^1 \max\{ 0, z-x-\eta \} \, \text{d}z 
= \int_{x+\eta}^1 z-x-\eta \, \text{d}z \\
&= \int_0^{1-x-\eta} t \, \text{d}t = \frac{1}{2}(1-x-\eta)^2.
\end{align*}
We calculate for $x, x+u \in V$
\begin{align*}
[Q_{EE}'(x+u,\eta+\nu) - Q_{EE}'(x,\eta)](u,\nu) = u^2 + 2u\nu + \nu^2 = (u + \nu)^2
\end{align*}
as long as $\eta+\nu \geq 0$. Since for small $\nu$ we can choose $u = -\nu$,
$Q_{EE}$ cannot be partially strongly convex in wrt. $x$ on the set 
$V \times \left( -\infty, \eta_0 \right]$. \\ 
In this example $Q_{EE}$ does not have the desired joint convexity properties on any open set contained 
in the support of $\mu$.
\end{example}

Unsurprisingly, $Q_{\alpha CVaR}$ does not behave any better:

\begin{example}
With the same specifications as in example 3 we can calculate the conditional value at risk
at some level $\alpha \in \left( 0,1 \right)$ for any $x \in V$ as (cf. \eqref{varrep})
\begin{align*}
Q_{\alpha CVaR}(x) = \min_{\eta \in \mathbb{R}} \big\{  \eta + \frac{1}{2\alpha} (1-x-\eta)^2  \big\}.
\end{align*}
The unique minimizer is $\eta^* = Q_{\alpha VaR}(x) = 1-\alpha - x$. 
Note that we can restrict the minimisation to $\eta > 0$ since 
the value at risk is obviously positive. We arrive at
\begin{align*}
Q_{\alpha CVaR}(x) = -x + \frac{1}{2}(2-\alpha),
\end{align*}
so there is no subset $U \subset V$ on which $Q_{\alpha CVaR}$ is strongly convex.
\end{example}

We shall give a rather strict condition on the value function $\varphi$ that yields partial strong convexity 
for the expected excess in section 3 without additional assumptions on the distribution of $z$.

Before that we will introduce the even weaker concept of strong convexity, restricted partial strong convexity,
which can be shown to hold for $Q_{EE}$ under less restrictive assumptions on the recourse function.
This property can also be characterized by monotonicity of the gradient as in Lemma 
\ref{lemma2}. 
\begin{definition}\label{def2}
Let $V \subset \mathbb{R}^n$ and $W \subset \mathbb{R}^m$ nonempty and convex.\\
A function $f: V \times W \rightarrow \mathbb{R}$ is called restricted partially 
$\kappa$-strongly convex on $\Omega \subset V \times W$ with respect to its first
argument if
\begin{align*}
f( \lambda(x_1,y_1) + (1-\lambda)(x_2,y_2) ) 
\leq \lambda f(x_1,y_1) + (1-\lambda) f(x_2,y_2) - 
\frac{\kappa}{2}\lambda(1-\lambda)\|x_1-x_2\|^2
\end{align*}
for all $(x_1,y_1),(x_2,y_2) \in \Omega$ and all $0 < \lambda < 1$.
\end{definition}
Note that $\Omega$ does not need to be a cylindrical subset of $V \times W$ as in 
Definition \ref{def1}.\\

In section 3 we will show that conditions A1-A5 are sufficient for restricted partial strong convexity 
of $Q_{EE}$ on some nonempt set $\Omega \subset V \times \mathbb{R}$.  
The estimates above showing that partial strong convexity of $Q_{EE}$ implies strong convexity 
of $Q_{\alpha CVaR}$ can be used verbatim to show that restricted partial strong convexity of $Q_{EE}$
does so as well - if one can additionally show that $(x, Q_{\alpha VaR}(x)) \in \Omega$ for all $x \in V$ (or maybe
for all $x \in U \subset V$). Example 4 shows that this cannot be done without only relying on assumptions A3 and
A4 on the distribution of $z$.

\section{Joint properties of $Q_{EE}$}
Theorem \ref{theorem0} addresses properties of $Q_{EE}(\cdot,\eta)$ for fixed threshold $\eta$. 
In Theorem \ref{theorem2} we will show that in conjunction with A1-A5 the following condition is
sufficient for partial strong convexity of $Q_{EE}(x,\eta)$ wrt. $x$:
\begin{itemize}
\item[A6] It holds $q > 0$, i.e. the gradient of the objective function 
of the second stage is positive componentwise  (cf. \eqref{valuefunction}).
\end{itemize}
Note that this condition is stronger than A2 (choose $v = 0$ there).
Although it is a rather strict condition on the problem data, it might be well 
justifiable in the setting of simple recourse problems because compensating actions 
in the second stage should have negative impact on the total objective.\\
If A6 does not hold, all that can be shown is restricted partial strong convexity in the sense
of Definition \ref{def2}. We start with an elementary lemma to provide some geometrical insights
that are used within the proof of Theorems \ref{theorem2} and \ref{theorem3}:

\begin{Lemma}\label{lemma1}
Let A1 and A2 hold. Then A6 is fulfilled if and only if one of the following conditions is fulfilled:
\begin{itemize}
\item[(i)] There is some $\alpha' > 0$ such that for all 
$u \in \mathbb{R}^s$ it holds
\begin{align}\label{est2}
\varphi(u) \geq \alpha' \, \|u\|,
\end{align}
\item[(ii)] For all $i \in I$ we have $d_i \in \text{int cone}\{ d_i - d_j \ | \ d_j \text{ adj. to } d_i \}$.
\end{itemize}
\end{Lemma}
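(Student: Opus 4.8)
The plan is to reduce the entire statement to the single geometric property that $\varphi$ is positive off the origin, i.e. $\varphi(u) > 0$ for all $u \neq 0$, and to show that A6, (i) and (ii) are each equivalent to it (so in particular all three are equivalent and the ``one of'' formulation is automatic). Since $\varphi = \max_{i \in I} d_i^\intercal \cdot$ is continuous and positively homogeneous of degree one, the quantitative bound (i) with some $\alpha' > 0$ is equivalent to this positivity: the minimum of $\varphi$ over the unit sphere is attained and strictly positive exactly under $\varphi > 0$ off $0$, and homogeneity extends the resulting bound to all of $\mathbb{R}^s$. Thus the lemma splits into proving A6 $\Leftrightarrow [\varphi > 0 \text{ off } 0]$ and (ii) $\Leftrightarrow [\varphi > 0 \text{ off } 0]$.

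For A6 $\Rightarrow$ (i) I would argue directly from the linear-programming form $\varphi(u) = \min\{q^\intercal y : Wy = u,\ y \geq 0\}$. Any feasible $y$ for $u \neq 0$ satisfies $\|y\| \geq \|u\|/\|W\|$ (operator norm) and, using $q > 0$, $q^\intercal y \geq (\min_j q_j)\|y\|_1 \geq (\min_j q_j)\|y\|$, whence $\varphi(u) \geq (\min_j q_j/\|W\|)\|u\|$, so $\alpha' := \min_j q_j/\|W\| > 0$ works. For the converse (i) $\Rightarrow$ A6 I would pass to the extreme points: $\varphi(u) > 0$ fails for some $u \neq 0$ precisely when $u$ lies in the polar cone of $\mathrm{cone}\{d_i : i \in I\}$, so $\varphi > 0$ off the origin is equivalent to $\mathrm{cone}\{d_i\} = \mathbb{R}^s$, i.e. $0 \in \mathrm{int}\,\mathrm{conv}\{d_i\}$. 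By Lemma \ref{lemma0} the hull equals $\{v : W^\intercal v \leq q\}$, so $0$ lies in its interior; testing the $j$-th constraint at $v = \varepsilon w_j/\|w_j\|$ for a column $w_j \neq 0$ forces $q_j > 0$, while A2 gives $q_j > 0$ directly for any zero column $w_j = 0$. Hence $q > 0$.

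The equivalence (i) $\Leftrightarrow$ (ii) is where the facial structure of Lemma \ref{lemma0} does the real work. Writing $C_i := \mathrm{cone}\{d_i - d_j : d_j \text{ adj. to } d_i\}$, I would first note that the inequalities coming from non-adjacent $d_j$ are redundant in the description of $K_i$ (they are valid but cut out faces of dimension below $s-1$, by Lemma \ref{lemma0}(ii)), so that $K_i = \{z : (d_i - d_j)^\intercal z \geq 0 \text{ for } d_j \text{ adj. to } d_i\}$ is exactly the dual cone $C_i^*$, and by biduality $C_i = K_i^*$. Since each $K_i$ is pointed and $s$-dimensional, the standard formula for the interior of a dual cone gives $\mathrm{int}\,C_i = \mathrm{int}\,K_i^* = \{w : w^\intercal z > 0 \text{ for all } z \in K_i \setminus \{0\}\}$. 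As $\varphi \equiv d_i^\intercal \cdot$ on $K_i$, the membership $d_i \in \mathrm{int}\,C_i$ is thus equivalent to $\varphi(z) > 0$ for all $z \in K_i \setminus \{0\}$; quantifying over $i$ and using $\bigcup_i K_i = \mathbb{R}^s$ turns (ii) into exactly $\varphi > 0$ off the origin, i.e. (i).

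I expect the delicate point to be the identification $K_i = C_i^*$, which hinges on discarding the non-adjacent (redundant) inequalities: one must be certain that the facets of $K_i$ are in bijection with the adjacent generators, which is precisely the content of the dimension statement in Lemma \ref{lemma0}(ii). The remaining ingredients — the interior-of-dual-cone description and the polar-cone characterization of $\mathrm{cone}\{d_i\} = \mathbb{R}^s$ — are standard facts of convex analysis, and the treatment of possible zero columns of $W$ via A2 is the only place where A2 (rather than A1 alone) is genuinely used.
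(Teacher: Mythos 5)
Your proof is correct, and it is worth noting that the paper does not actually spell out an argument for this lemma: its entire proof reads ``This follows directly from well-known separating hyperplane theorems,'' so your write-up supplies precisely the details the authors leave implicit. Your organisation --- reducing A6, (i) and (ii) to the single condition $\varphi(u) > 0$ for all $u \neq 0$, which in passing settles the ambiguity in the phrase ``one of the following'' by showing all three statements are equivalent --- is a clean implementation of exactly the separation arguments alluded to: the polar-cone characterisation of $\mathrm{cone}\{d_i\} = \mathbb{R}^s$, the supporting-hyperplane step giving $0 \in \mathrm{int}\,\mathrm{conv}\{d_i\} = \mathrm{int}\{v \; | \; W^\intercal v \leq q\}$ and hence $q > 0$ (with A2 handling zero columns of $W$), and the interior-of-dual-cone formula. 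The genuinely nontrivial ingredient, which you correctly isolate, is the identification $K_i = C_i^*$, i.e.\ that the facet-defining inequalities of $K_i$ are precisely those indexed by adjacent $d_j$; this does follow from the dimension statement of Lemma~\ref{lemma0}(ii), since the face cut out by the inequality $(d_i - d_j)^\intercal z \geq 0$ is exactly $K_i \cap K_j$ (on that face $d_i^\intercal z = d_j^\intercal z = \varphi(z)$, so such points lie in $K_j$ as well), and the non-facet-defining inequalities of a full-dimensional polyhedral cone are redundant. Two minor remarks: first, your LP estimate $\varphi(u) \geq (\min_j q_j / \|W\|)\,\|u\|$ in the direction A6 $\Rightarrow$ (i) produces an explicit constant $\alpha'$, which is more than the statement demands; second, your closing claim that the zero-column case is ``the only place where A2 is genuinely used'' is slightly overstated, since A2 already enters through Lemma~\ref{lemma0} itself (nonemptiness and the vertex description of $\{v \; | \; W^\intercal v \leq q\}$, hence the representation $\varphi(u) = \max_{i \in I} d_i^\intercal u$), on which your whole argument rests.
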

\begin{proof}
This follows directly from well-known separating hyperplane theorems.
\end{proof}

\begin{Lemma}
Let A1-A4 hold. Then $Q_{EE}$ is continuously differentiable and we have the following formula:
\begin{align}\label{gradient_formula}
[Q_{EE}'(x+u, \eta + \nu) - Q_{EE}'(x,\eta)](u, \nu) \notag \\
= \int \mu\big( \bigcup_{l \in I(u,\nu)(\tau)} M_l(x, \eta) \big\backslash
\bigcup_{l \in I(u,\nu)(\tau)} M_l(x+u, \eta+\nu) \big) \, \text{d}\tau
\end{align}
with suitable parametric sets $I(u,\nu)$ sets $M_l(x,\eta) \subset \mathbb{R}^s$ to be constructed in the 
proof below.
\end{Lemma}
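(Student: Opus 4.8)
The plan is to reduce everything to a pointwise analysis of the integrand $g_z(x,\eta):=\max\{0,\varphi(z-x)-\eta\}$ and then differentiate under the integral sign. By A1--A2 and Lemma~\ref{lemma0}(i) we have $\varphi(t)=\max_{i\in I}d_i^\intercal t$ together with the polyhedral decomposition $\bigcup_{i\in I}K_i=\mathbb{R}^s$, so $g_z$ is a finite convex piecewise-linear function of $(x,\eta)$. Where it is differentiable its gradient is $0$ on $\{\varphi(z-x)<\eta\}$ and $(-d_{i(z-x)},-1)$ on $\{\varphi(z-x)>\eta\}$, where $i(z-x)$ is the unique index with $z-x\in\text{int}\,K_{i(z-x)}$. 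For fixed $(x,\eta)$ the set of $z$ at which $g_z$ fails to be differentiable is contained in the hyperplane piece $\{\varphi(z-x)=\eta\}$ together with the cone boundaries $\{z : z-x\in K_i\cap K_j,\ i\neq j\}$, which are Lebesgue-null by Lemma~\ref{lemma0}(ii) and hence $\mu$-null by A4. Since $g_z$ is globally Lipschitz in $(x,\eta)$ with constant $\max_{i\in I}\|(d_i,1)\|$, assumption A3 supplies an integrable bound for the difference quotients, so dominated convergence permits differentiation under the integral in each coordinate direction and yields continuity of the gradient by the same argument. This gives continuous differentiability and the closed form
\[
Q_{EE}'(x,\eta)=-\sum_{i\in I}\binom{d_i}{1}\,\mu\big(M_i(x,\eta)\big),\qquad
M_i(x,\eta):=\{\,z : z-x\in\text{int}\,K_i,\ d_i^\intercal(z-x)>\eta\,\}.
\]

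Substituting this formula into the left-hand side and using that the cones $\text{int}\,K_i$ are pairwise disjoint, so that the $M_i(x,\eta)$ are disjoint up to $\mu$-null sets, a direct computation gives
\[
[Q_{EE}'(x+u,\eta+\nu)-Q_{EE}'(x,\eta)](u,\nu)
=\sum_{i\in I}(d_i^\intercal u+\nu)\big[\mu(M_i(x,\eta))-\mu(M_i(x+u,\eta+\nu))\big].
\]
Writing $c_i:=d_i^\intercal u+\nu=\binom{d_i}{1}^\intercal\binom{u}{\nu}$ and $\Delta_i:=\mu(M_i(x,\eta))-\mu(M_i(x+u,\eta+\nu))$, the statement reduces to representing the finite weighted sum $\sum_{i\in I}c_i\Delta_i$ as the advertised $\tau$-integral.

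For this I use a Cavalieri/layer-cake argument in the scalar parameter $\tau$, constructing the parametric index family by super-level sets for $\tau\ge 0$ and sub-level sets for $\tau<0$:
\[
I(u,\nu)(\tau):=\{\,i\in I : d_i^\intercal u+\nu>\tau\,\}\ (\tau\ge 0),\qquad
I(u,\nu)(\tau):=\{\,i\in I : d_i^\intercal u+\nu\le\tau\,\}\ (\tau<0).
\]
These are tailored to the identity $\int_0^\infty\mathbf 1\{c_i>\tau\}\,\text{d}\tau-\int_{-\infty}^0\mathbf 1\{c_i\le\tau\}\,\text{d}\tau=c_i^+-c_i^-=c_i$, so that multiplying by $\Delta_i$, summing over the finitely many $i$, and exchanging sum and integral turns $\sum_{i\in I}c_i\Delta_i$ into $\int_{\mathbb{R}}\sum_{l\in I(u,\nu)(\tau)}\Delta_l\,\text{d}\tau$, with the orientation of the two regimes absorbing the sign. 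By disjointness of the $M_l$ the inner sum collapses to $\mu(A(\tau))-\mu(B(\tau))$, where $A(\tau)=\bigcup_{l\in I(u,\nu)(\tau)}M_l(x,\eta)$ and $B(\tau)=\bigcup_{l\in I(u,\nu)(\tau)}M_l(x+u,\eta+\nu)$.

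The delicate, and in my view hardest, step is to replace this difference of measures by the measure of the single set difference $\mu(A(\tau)\setminus B(\tau))$ demanded by \eqref{gradient_formula}, which is exactly where the choice of $I(u,\nu)(\tau)$ pays off. The key observation is that shifting by $(u,\nu)$ raises the active affine threshold by precisely $c_l$, since $d_l^\intercal(z-x-u)>\eta+\nu$ is equivalent to $d_l^\intercal(z-x)>\eta+c_l$; thus on the super-level regime $c_l>\tau\ge 0$ the threshold strictly increases and the level set shrinks, which I would exploit to prove $B(\tau)\subseteq A(\tau)$ up to $\mu$-null sets (and symmetrically $A(\tau)\subseteq B(\tau)$ on the sub-level regime, where the orientation of the set difference is reversed). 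The point requiring genuine care is reconciling the \emph{translation} of the cone $\text{int}\,K_l$ by $u$ with this threshold shift, so that the unions really do nest; this geometric bookkeeping is the main obstacle, everything else being routine. Once the inclusions are established we have $\mu(A(\tau))-\mu(B(\tau))=\mu(A(\tau)\setminus B(\tau))$, and merging the two regimes into a single integral over $\tau$ yields \eqref{gradient_formula}.
\qed
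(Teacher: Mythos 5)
Your skeleton matches the paper's up to the reformulation engine (your layer-cake identity for $c_i=c_i^+-c_i^-$ is equivalent to the paper's device of writing the two directional derivatives as Stieltjes integrals $\int \tau\,\mathrm{d}F_{Y_k}(\tau)$ of discrete random variables with common atom values and integrating by parts), and your gradient formula agrees with \eqref{EEderiv}. However, the proof has a genuine gap exactly where you flag "the main obstacle": the nesting of the unions is the entire mathematical content of the lemma, and you defer it rather than prove it. Your heuristic --- "the threshold strictly increases and the level set shrinks" --- is false index by index: $M_l(x+u,\eta+\nu)\not\subseteq M_l(x,\eta)$ in general, because the cone is translated by $u$ and a point can migrate into a neighbouring cone; your observation that $d_l^\intercal(z-x-u)>\eta+\nu$ is equivalent to $d_l^\intercal(z-x)>\eta+c_l$ controls only the half-space constraint, not cone membership. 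What is needed is a cross-index comparison, which is the paper's argument for \eqref{inclusion2}: if $z-x-u\in K_{i_1}$ and $z-x\in K_{i_2}$, then adding the defining inequalities $(d_{i_1}-d_{i_2})^\intercal(z-x-u)\geq 0$ and $(d_{i_2}-d_{i_1})^\intercal(z-x)\geq 0$ yields $d_{i_2}^\intercal u\geq d_{i_1}^\intercal u$, i.e. $c_{i_2}\geq c_{i_1}$. Hence unions over index sets that are \emph{upward closed} in $c_i$ nest one way and downward closed ones the other; your super-level sets ($\tau\geq 0$) and sub-level sets ($\tau<0$) do have these closure properties, and one additionally needs the threshold estimate $\varphi(\bar z-x)>\eta$ (resp. $\varphi(\bar z-x-u)>\eta+\nu$) as in the paper's treatment of the $\tau<0$ case of \eqref{inclusion1}. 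So your construction \emph{can} be completed, but as submitted the decisive step is asserted, not proved.

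A second, structural defect: even granting the inclusions, your two-regime scheme does not yield \eqref{gradient_formula} as stated. On $\tau<0$ your integrand comes out as $\mu\big(B(\tau)\setminus A(\tau)\big)$, with the orientation of the set difference reversed, so the final "merging" step fails --- the displayed formula has the single orientation $\bigcup_l M_l(x,\eta)\setminus\bigcup_l M_l(x+u,\eta+\nu)$ for \emph{all} $\tau$, and this uniform nonnegative form is used later in the strong convexity estimates. Relatedly, by omitting the sub-level event $M_0(x,\eta)=\{\varphi(z-x)\leq\eta\}$ your two mass families are unbalanced, $\sum_{i\in I}\mu(M_i(x,\eta))\neq\sum_{i\in I}\mu(M_i(x+u,\eta+\nu))$ in general. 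The paper resolves both points at once by adjoining $M_0$ with value $y_0=0$ and taking the single family $I(u,\nu)(\tau)=\{i\in I\cup\{0\}\,:\,y_i\leq\tau\}$ with $y_i=-c_i$: the balance $\sum_i\pi_i^1=\sum_i\pi_i^2=1$ kills the boundary terms in the integration by parts, and the one-sided inclusion \eqref{inclusion1} then holds for every $\tau$ simultaneously. In its present form your argument, once the inclusions were supplied, would prove a correct variant of the identity, but not the lemma's formula.
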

\begin{proof}
By assumptions A1-A4 and standard arguments $Q_{EE}$ is continuously differentiable on 
$\mathbb{R}^s \times \mathbb{R}$, so only \eqref{gradient_formula} warrants a proof.
We calculate 
\begin{align}\label{EEderiv}
\nonumber
&Q_{EE}'(x;\eta)(u,\nu) = 
\sum_{i \in I} \mu(M_i(x,\eta))(-d_i^\intercal u - v) \\
&Q_{EE}'(x+u;\eta+\nu)(u,\nu) = 
\sum_{i \in I} \mu(M_i(x+u,\eta+\nu))(-d_i^\intercal u - v)  
\end{align}
with shorthand $M_i(x,\eta) = (x + K_i) \cap \{ \varphi(z-x) \geq \eta \}$.\\
Let $M_0(x,\eta) = \{ \varphi(z-x) \leq \eta \}$ and 
\begin{align*}
&\pi^1_i = \mu(M_i(x,\eta)), &y^1_i = -d_i^\intercal u - v, \, i \in I, \\
&\pi^1_0 = \mu(M_0(x,\eta)), &y^1_0 = 0, \\
&\pi^2_i = \mu(M_i(x+u,\eta+\nu)), &y^2_i = -d_i^\intercal u - v, \, i \in I, \\
&\pi^2_0 = \mu(M_0(x+u,\eta+\nu)), &y^2_0 = 0.
\end{align*}
Define random variables $Y_1$ and $Y_2$ taking values $y^1_i$ with probability
$\pi^1_i$ and $y^2_i$ with probability $\pi^2_i$ respectively.
We observe that the quantities in \eqref{EEderiv} can be rewritten as 
Riemann-Stieltjes integrals with cdfs $F_{Y_1}, F_{Y_2}$ as integrators: 
\begin{align*}
&Q_{EE}'(x,\eta)(u,\nu) = \sum_{i=0}^N \pi^1_i y^1_i = 
\int \tau \, \text{d}F_{Y_1}(\tau) \\
&Q_{EE}'(x+u,\eta+\nu)(u,\nu) = \sum_{i=0}^N \pi^2_i y^2_i = 
\int \tau \, \text{d}F_{Y_2}(\tau).
\end{align*}
Integration by parts yields
\begin{align*}
&[Q_{EE}'(x+u,\eta+\nu) - Q_{EE}'(x,\eta)](u,\nu) \\
&= \int \tau \, \text{d}F_{Y_2}(\tau) - \int \tau \, \text{d}F_{Y_1}(\tau)
= \int F_{Y_1} - F_{Y_2} \, \text{d}\tau.
\end{align*}
Note that the boundary terms cancel out because $y^1_i = y^2_i$ for all $i$.\\
Introducing the index set
\begin{align*}
I(u,\nu)(\tau) = \{i \in I \cup \{ 0 \} \ | \ y_i \leq \tau \}
\end{align*}
and observing that the sets $M_i(x,\eta)$ only meet in lower dimensional
sets (if they meet at all) and thus $\mu(M_i(x,\eta) \cap M_j(x,\eta)) = 0$ for $i \neq j$,
we can write down the cdfs $F_{Y_1}$ and $F_{Y_2}$ as follows:
\begin{align}\label{cdfs}
\nonumber
&F_{Y_1}(\tau) = \sum_{i \in I(u,\nu)(\tau)} \pi^1_i = 
\sum_{i \in I(u,\nu)(\tau)} \mu(M_i(x,\eta)) =
\mu\big( \bigcup_{i \in I(u,\nu)(\tau)} M_i(x,\eta) \big), \\
&F_{Y_2}(\tau) = \sum_{i \in I(u,\nu)(\tau)} \pi^2_i = 
\sum_{i \in I(u,\nu)(\tau)} \mu(M_i(x+u,\eta+\nu)) =
\mu\big( \bigcup_{i \in I(u,\nu)(\tau)} M_i(x+u,\eta+\nu) \big).
\end{align}
Note that $F_{Y_1} - F_{Y_2}$ can be written as the measure of a difference
of sets if we can show that the following inclusion holds for all $\tau \in \mathbb{R}$:
\begin{align}\label{inclusion1}
\bigcup_{i \in I(u,\nu)(\tau)} M_i(x+u, \eta+\nu) \subset 
\bigcup_{i \in I(u,\nu)(\tau)} M_i(x, \eta).
\end{align}
First note that for all $\tau$ we have
\begin{align}\label{inclusion2}
\bigcup_{i \in I(u,\nu)(\tau) \backslash \{0\}} x + u + K_i
\subset  \bigcup_{i \in I(u,\nu)(\tau) \backslash \{0\}} x + K_i
\end{align}
for if it was not true there would be some $\bar{z}$ contained in the union
to the left side of the inclusion but not in the right.
This means there is some index $i_1$ with $\bar{z} \in x+u+K_{i_1}$
such that $-d_{i_1}^\intercal u - v \leq \tau$.
Since the cones $x+K_i$ cover the entire space (cf. Lemma \ref{lemma0} (ii) ) there is some index
$i_2$ such that $\bar{z} \in x + K_{i_2}$ and $-d_{i_2}^\intercal u - v > \tau$.
By using the definition of the sets $K_i$ we arrive at the contradiction
\begin{align*}
0 &\leq (d_{i_1} - d_{i_2})^\intercal (\bar{z}-x-u)
= (d_{i_1} - d_{i_2})^\intercal (\bar{z}-x) - (d_{i_1} - d_{i_2})^\intercal u \\
&\leq -d_{i_1}^\intercal u + d_{i_2}^\intercal u 
= (-d_{i_1}^\intercal u - v) + (d_{i_2}^\intercal u + v) < \tau + (-\tau) = 0
\end{align*}
Back to \eqref{inclusion1} we see that this inclusion reduces to 
\eqref{inclusion2} in case $\tau \geq 0$ which was just discussed.
Now let $\tau < 0$ and $\bar{z} \in M_{i_1}(x+u,\eta+\nu)$ for some $i_1 \in I(u,\nu)(\tau)$
which implies  $\bar{z} \in x + u + K_{i_1}$,  
$-d_{i_1}^\intercal u - \nu \leq \tau < 0$ and 
$d_{i_1}^\intercal (\bar{z} - x - u) \geq \eta + \nu$.
This yields
\begin{align*}
\varphi(\bar{z}-x) \geq d_{i_1}^\intercal (\bar{z}-x-u) + d_{i_1}^\intercal u 
> \eta + \nu - \nu = \eta. 
\end{align*}
Since by \eqref{inclusion2} we also have $\bar{z} \in x + K_{i_2}$ for some
$i \in I(u,\nu)(\tau)$ we have shown that $\bar{z} \in \bigcup_{i \in I(u,\nu)(\tau)} M_i(x, \eta)$ and
\eqref{inclusion1} is proven. We can now replace $F_{Y_1} - F_{Y_2}$ and conclude the proof with
\begin{align*}
\int (F_{Y_1} - F_{Y_2})(\tau) \, \text{d}\tau 
= \int \mu\big( \bigcup_{l \in I(u,\nu)(\tau)} M_l(x, \eta) \big\backslash
\bigcup_{l \in I(u,\nu)(\tau)} M_l(x+u, \eta+\nu) \big) \, \text{d}\tau.
\end{align*}
\qed
\end{proof}

Since $Q_{EE}$ is continuously differentiable we can prove (restricted) partial strong 
convexity of $Q_{EE}$ by showing that \eqref{MonMap} (and its restricted counterpart) 
holds for $Q_{EE}$, i.e. showing that there exists some $\kappa > 0$ with
\begin{align}\label{SCformula}
[Q_{EE}'(x+u, \eta + \nu) - Q_{EE}'(x,\eta)](u,\nu) \geq \kappa \, \|u\|^2
\end{align}
for relevant $x,\eta,u, \nu$. \\
This is done by restricting the area of integration in \eqref{gradient_formula} 
to some subset with measure not smaller than $\alpha \|u\|$ for some constant $\alpha > 0$. 
Then the $\mu$-measure of the set within the integrand will be estimated from below by constructing 
a cylindrical subset with Lebesgue measure not smaller than $\alpha' \|u\|$ (with some other constant
$\alpha' > 0$) and which is contained in $V+B_\rho(0)$, where a lower bound on $\mu$'s 
Lebesgue-density is available. We begin with the special case employing condition A6:

\begin{Theorem}[Partial strong convexity of $Q_{EE}$]\label{theorem2}
Let A1-A6 hold.\\Then $Q_{EE}(x,\eta)$ is partially strongly convex on the set 
$V_{\eta_0} = V \times \left( -\infty, \eta_0 \right]$ wrt. $x$.
\end{Theorem}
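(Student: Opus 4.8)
The plan is to verify the gradient-monotonicity criterion \eqref{SCformula} and then bound the integral furnished by the gradient formula \eqref{gradient_formula} from below. By Lemma \ref{lemma2} it suffices to produce a constant $\kappa > 0$, depending only on the model data, such that $[Q_{EE}'(x+u,\eta+\nu) - Q_{EE}'(x,\eta)](u,\nu) \geq \kappa\|u\|^2$ for all $x, x+u \in V$ and all $\eta, \eta+\nu \in (-\infty,\eta_0]$; we may take $u \neq 0$. By the preceding lemma the left-hand side equals $\int_\mathbb{R} \mu(D(\tau))\,d\tau$, where $D(\tau) = \bigcup_{i \in I(u,\nu)(\tau)} M_i(x,\eta) \setminus \bigcup_{i \in I(u,\nu)(\tau)} M_i(x+u,\eta+\nu)$, and the inclusion \eqref{inclusion1} already shows $\mu(D(\tau)) \geq 0$. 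The entire task is thus a quantitative lower bound on this integral.

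Condition A6 enters through Lemma \ref{lemma1}(i), which gives $\varphi(w) \geq \alpha'\|w\|$ for every $w$; in particular $\varphi \geq 0$ everywhere, and for the fixed direction $u$ there is a leading index $i^*$ with $u \in K_{i^*}$ and $d_{i^*}^\intercal u = \varphi(u) \geq \alpha'\|u\|$, while $\min_{i} d_i^\intercal u = -\varphi(-u) \leq -\alpha'\|u\|$. Hence the thresholds $y_i = -d_i^\intercal u - \nu$ occupy an interval of length at least $2\alpha'\|u\|$ that is merely translated, not shrunk, as $\nu$ varies. This $\nu$-uniform spread is precisely the property that fails in Example 3, where $\varphi(u)=0$ for $u<0$ lets the threshold perturbation cancel the whole gradient difference; A6 excludes this. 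Using in addition the cone-separation constant of Lemma \ref{lemma0}(iii), I isolate a window $J=[\,y_{i^*},\,y_{i^*}+c_1\|u\|\,)$ of $\tau$-values, of length $\geq c_1\|u\|$, on which $i^* \in I(u,\nu)(\tau)$ while the index set is still thin enough for the leading cone to govern $D(\tau)$.

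For $\tau \in J$ I then construct a fixed set $C \subset D(\tau)$ of the cylindrical type used in \cite{CSS17}: a cross-section in the relative interior of $x + K_{i^*}$ times a short segment adapted to the level $\eta$, chosen so that $C \subset M_{i^*}(x,\eta)$ yet $C$ avoids every shifted piece $M_i(x+u,\eta+\nu)$ with $i \in I(u,\nu)(\tau)$. The positivity $d_{i^*}^\intercal u \geq \alpha'\|u\|$ guarantees that this separation persists, with thickness proportional to $\|u\|$, uniformly in $\nu$. Because $\eta \leq \eta_0$, condition A5 keeps the relevant frontier within distance $\rho$ of $x$, so $C$ can be placed inside $V + B_\rho(0)$, where A4 gives $\theta \geq r$. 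Consequently $\mu(D(\tau)) \geq \mu(C) \geq r\,c_2\|u\|$ for every $\tau \in J$, and integrating over $J$ yields $\int_\mathbb{R}\mu(D(\tau))\,d\tau \geq c_1\|u\| \cdot r\,c_2\|u\| = \kappa\|u\|^2$ with $\kappa = c_1 c_2\, r$ depending only on $r$, $\rho$, $\eta_0$ and the geometry of $\{v \mid W^\intercal v \leq q\}$ (through $\alpha$ and $\alpha'$). By Lemma \ref{lemma2} this is partial strong convexity on $V_{\eta_0}$.

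I expect the main obstacle to be the geometric bookkeeping in the third step: choosing $J$ and the cylinder $C$ so that $C \subset D(\tau)$ holds for every $\tau \in J$ simultaneously, that the Lebesgue measure of $C$ and the length of $J$ are each at least a fixed multiple of $\|u\|$, and that $C \subset V + B_\rho(0)$ --- all uniformly in the shift $\nu$, including strongly negative $\nu$, for which the $\tau$-window has moved far from the origin even though the anchoring of $C$ near $x$ does not. Reconciling these requirements, and exploiting A6 to rule out the $\nu$-cancellation exhibited in Examples 3 and 4, is the crux of the argument.
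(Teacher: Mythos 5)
Your outline coincides with the paper's own strategy---verify the monotonicity criterion \eqref{SCformula} via Lemma \ref{lemma2}, start from the gradient formula \eqref{gradient_formula}, restrict to a $\tau$-window of length proportional to $\|u\|$, and bound the integrand from below by the $\mu$-measure of a cylindrical set placed inside $V + B_\rho(0)$ where A4 gives the density bound $r$. But the step you defer as ``geometric bookkeeping'' is the actual mathematical content of the theorem, and on the one point you do assert---that the separation of the cylinder $C$ from the shifted sets $M_k(x+u,\eta+\nu)$ persists ``uniformly in $\nu$, including strongly negative $\nu$''---your proposal claims exactly what the paper avoids having to prove. For $\eta$ near $\eta_0$ and $\nu$ strongly negative (admissible in $V_{\eta_0}$: take $\eta = \eta_0$ and $\eta + \nu \to -\infty$), the sets $M_k(x+u,\eta+\nu)$ grow into full shifted cones covering $\mathbb{R}^s$, while all thresholds $y_k = -d_k^\intercal u - \nu$ migrate toward $+\infty$, so the productive window sits where the index $0$ has entered $I(u,\nu)(\tau)$ and the combinatorics of $D(\tau)$ changes entirely. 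The paper never confronts this regime: it uses A6 \emph{first}, to reduce WLOG to $\nu \geq 0$ by swapping $(x,\eta)$ and $(x+u,\eta+\nu)$---a reduction that is legitimate precisely because under A6 the reflected direction $-u$ again lies in some cone on which $d_i^\intercal$ is bounded below by $\alpha'\|\cdot\|$ via \eqref{est2}. This reflection, not any ``spread'' of the thresholds, is where A6 is crucially used; indeed your diagnosis of Example 3 is inaccurate, since there the spread of the $y_i$'s equals $|u|$ and is likewise merely translated by $\nu$---the degeneracy comes from the extreme point $d = 0$ (excluded by A6), which lets $u = -\nu$ cancel against the fixed threshold $y_0 = 0$ of the excess-free region. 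That without the reduction one must instead tie $\nu$ to $\|u\|$ is exactly the content of Theorem \ref{theorem3} and its restricted set $\Omega$.

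Second, a single cylinder ``adapted to the level $\eta$'' does not suffice: the paper needs a genuine case split with two different cylinders and two different windows. For $-\infty < \eta \leq \eta^-$ (with $\eta^-$ chosen via \eqref{eta_minus}) the cylinder is built on a facet piece $F^+_{i,j}(x,\eta^-) \cap B_{\tilde{\rho}}(x+\hat{y}^j(\eta^-))$, the window is $(-d_i^\intercal u - \nu, -d_j^\intercal u - \nu)$ with $j$ from Lemma \ref{lemma0}(iii) (this is your $J$, with $c_1 = \alpha$), and exclusion from $M_k(x+u,\eta+\nu)$ for $d_k^\intercal u > d_j^\intercal u$ follows from the facet identity $d_k^\intercal z \leq d_i^\intercal z = d_j^\intercal z$, independently of $\eta$ and $\nu$. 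For $\eta^- < \eta \leq \eta^+$ this construction fails (the facet-based cylinder exits $\{d_i^\intercal(z-x) \geq \eta\}$), and the paper switches to a cap $F_{i,0}(x,\eta) \cap B_{\tilde{\rho}}(x+\hat{y}^i(\eta))$ on the level set itself, with window $(-d_i^\intercal u - \nu, 0)$ whose length is bounded below by $\alpha'\|u\|$ only thanks to \eqref{est2} \emph{and} $\nu \geq 0$; the exclusion estimate $d_k^\intercal(x+z+\lambda u - x - u) < \eta + (1-\lambda)\nu \leq \eta + \nu$ again uses $\nu \geq 0$ explicitly. Your fixed window $[\,y_{i^*}, y_{i^*} + c_1\|u\|\,)$ does not serve this second regime, where the removed family is $\{k : -d_k^\intercal u - \nu < 0\}$, tied to the endpoint $0$ rather than to an adjacent threshold $y_j$, and where A5 (through \eqref{eta_plus}) is what keeps the cap within reach of the density bound. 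In short: your skeleton is the paper's, and your constants would assemble the same way ($\kappa = \min$ over the finitely many cones of $\alpha^2\beta r$ resp.\ $\alpha'^2\beta' r$), but the two devices that turn the outline into a proof---the A6-enabled reflection to $\nu \geq 0$ and the $\eta^-/\eta^+$ dichotomy with facet- versus cap-based cylinders---are missing, and the uniform-in-$\nu$ separation as you state it would not survive the regime it is supposed to cover.
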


\begin{proof}
Crucially relying on A6 we may assume $\nu \geq 0$,  otherwise substitute $x' = x + u, \eta' = \eta + \nu$ and 
$u' = -u, \nu' = -\nu$ and consider $u'$ and $\nu'$ instead of $u$ and $\nu$.  
Since the sets $K_l$ cover the entire space $\mathbb{R}^s$ we can pick an index
$i$ such that $u \in K_i$. Note that in condition A5 and the discussion before it, 
which will come into play soon, we have $I = I^+$ and $K_l^+ = K_l$ for all indices $l \in I$. \\

We shall now construct some $\eta^- > 0$ and give the desired estimate of \eqref{gradient_formula} 
for $-\infty < \eta \leq \eta^-$ first.
 By Lemma \ref{lemma0} (iii) there is some index $j$ different from $i$ such that 
\begin{align}\label{est1}
(d_i - d_j)^\intercal u \geq \alpha \|u\|.
\end{align}

Assume that $\tau \in ( -d_i^\intercal u - \nu,  -d_j^\intercal u - \nu )$. This implies
$i \in I(u,\nu)(\tau)$ and is implied by $k \in I(u,\nu)(\tau)$ for all $k$ with $d_k^\intercal u > d_j^\intercal u$.
We thus find the inclusion
\begin{align}
\bigcup_{l \in I(u,\nu)(\tau)} M_l(x, \eta) \big\backslash
\bigcup_{l \in I(u,\nu)(\tau)} M_l(x+u, \eta+\nu)
\supset M_i(x,\eta) \big\backslash \bigcup_{\{ k : d_k^\intercal u > d_j^\intercal u \}} M_k(x+u, \eta+\nu) \label{chain2}
\end{align}
where the set on the right-hand side does not depend on $\tau$ anymore. We want to estimate the $\mu$-measure of 
this set:\\ 

Remember that $K_i$ is a pointed cone and therefore has finitely many facets 
$\{F^i_j \}_{j=1,\ldots,f_i}$ and finitely many extreme-rays $\{ r t^{i,j}_k \ | \ k =1,\ldots,g_{i,j} \}$
adjacent to facet $F^i_j$.  For notational convenience set 
\begin{align*}
F^+_{i,j}(x,\eta) = (x+F^i_j) \cap \{ d_i^\intercal (z-x) \geq \eta \}.
\end{align*}
Intersecting $F^i_j$ with a hyperplane $\{ d_i^\intercal z = \eta^- \}$ - this really is a hyperplane since
$i \in I^+$ -  yields points $\{ y^j_1(\eta^-) , \ldots, y^j_{r_j}(\eta^-) \}$ where the hyperplane meets the extreme rays 
of $K_i$ adjacent to $F^i_j$:
\begin{align*}
\{ d_i^\intercal z = \eta^- \} \cap K_i = \{ y^j_1(\eta^-) \ldots, y^j_{r_j}(\eta^-) \}.
\end{align*}
Among all $y^j_k(\eta^-)$ pick some $\hat{y}^j(\eta^-)$ with
\begin{align}\label{y_hat_j}
\|\hat{y}^j(\eta^-) \| = \min\{ \| y^j_k(\eta^-) \|  \ | \ k =1,\ldots,r_j \} .
\end{align}

Choose $\eta^- > 0$ such that 
\begin{align}\label{eta_minus}
\max_{j = 1,\ldots, f_i}\| \hat{y}^j(\eta^-) \| < \rho,
\end{align}
which is possible since $\| \hat{y}^j(\eta^-) \| \rightarrow 0$ as $\eta^- \rightarrow 0+$,
and let $\tilde{\rho} = \rho - \max_{j = 1,\ldots, f_i}\| \hat{y}^j(\eta^-) \|$.

For all $-\infty < \eta \leq \eta^-$ we can now show the inclusions
\begin{align}\label{incl1}
\bigcup_{0 \leq \lambda < 1} \Big( [ F^+_{i,j}(x,\eta^-) \cap B_{\tilde{\rho}}(x+\hat{y}^j(\eta^-)) ]+ \lambda u \big)
&\subset V + B_\rho(0), \\
\label{incl1_1}
\bigcup_{0 \leq \lambda < 1} \Big( [ F^+_{i,j}(x,\eta^-) \cap B_{\tilde{\rho}}(x+\hat{y}^j(\eta^-)) ]+ \lambda u \big)
&\subset \Big( M_i(x,\eta) \big\backslash \bigcup_{k:d_k^\intercal u > d_j^\intercal u} 
M_i(x+u, \eta+\nu) \Big).
\end{align}
To this end let $x + z + \lambda u$ with $z \in F^+_{i,j}(0,\eta^-) \cap B_{\tilde{\rho}}(\hat{y}^j(\eta^-) )$ and 
$0 \leq \lambda < 1$ be given. It holds $x + \lambda u \in V$ due to the convexity of $V$. Furthermore
\begin{align*}
\| z \| &\leq \| z - \hat{y}^j \| + \| \hat{y}^j(\eta^-) \| \\
         &\leq \tilde{\rho} + \| \hat{y}^j(\eta^-) \| \\
         &= \rho - \max_{k = 1,\ldots, f_i}\| \hat{y}^j_k(\eta^-) \| + \| \hat{y}^j(\eta^-) \| \leq \rho.
\end{align*}
It remains to be shown
\begin{align*}
x + z + \lambda u \in M_i(x,\eta) \big\backslash \bigcup_{\{ k : d_k^\intercal u > d_j^\intercal u \}} M_i(x+u, \eta+\nu).
\end{align*}
Due to $x+z \in F^+_{i_j}(x,\eta^-) \subset x + K_i$ and $u \in K_i$ we have $x+z+\lambda u \in x+K_i$, 
since $K_i$ is a convex cone. We also have 
\begin{align*}
d_i^\intercal( x + z + \lambda u - x) = d_i^\intercal z + \lambda d_i^\intercal u \geq \eta^- + 0 \geq \eta,
\end{align*}
which establishes $x+z+\lambda u \in M_i(x,\eta)$.
Let $k$ be any index in $I$ such that $d_k^\intercal u > d_j^\intercal u$.
We will show that $x+z+\lambda u$ does not even lie in $x+u+K_k$:
\begin{align*}
d_k^\intercal (x+z+\lambda u - x - u) &= d_k^\intercal z + (\lambda-1)d_k^\intercal u \\
                                      &< d_j^\intercal z + (\lambda-1)d_j^\intercal u = d_j^\intercal(x+z+\lambda u - x - u)
\end{align*}
where we used the fact that $z \in F^i_j$, implying $d_k^\intercal z \leq d_i^\intercal z = d_j^\intercal z$,
 and $\lambda < 1$.\\
 
As the last prerequisite step we want to show that there exists some constant $\beta > 0$
such that
\begin{align} \label{cyl_measure}
\lambda \Big(\bigcup_{0 \leq \lambda < 1} [ F^+_{i,j}(x,\eta^-) \cap B_{\tilde{\rho}}(x+\hat{y}^j(\eta^-)) ]+ 
\lambda u \Big) \geq \alpha \, \beta \, \|u\|.
\end{align}
First note that we may as well set $x=0$ due to the translation invariance of the Lebesgue measure. 
The set is then the Lebesgue measure of a cylindrical set with bases 
$F^+_{i,j}(0,\eta^-) \cap B_{\tilde{\rho}}( \hat{y}^j(\eta^-) )$
and $F^+_{i,j}(0,\eta^-) \cap B_{\tilde{\rho}}( \hat{y}^j(\eta^-) ) + u$.

Let
\begin{align}\label{cyl_base}
\beta = \lambda_{s-1}\big( F^+_{i,j}(0,\eta^-) \cap B_{\tilde{\rho}}( \hat{y}^j(\eta^-) ) \big) > 0.
\end{align}
This constant still depends on the index $j$ which in turn depends on 
the direction $u \in K_i$. Since there are only finitely many possible
choices of $j$ we can robustify by picking the minimal $\beta$.
 
The estimate \eqref{cyl_measure} then follows from \eqref{est1} and Cavalieri's principle.
As a side-remark - which comes into play when trying to maximize the constant of partial strong convexity - 
we note that the function
\begin{align*}
\eta^- \mapsto 
 \lambda_{s-1}\big(F^+_{i,j}(0,\eta^-) \cap B_{\tilde{\rho}}( \hat{y}^j(\eta^-) ) \big) 
\end{align*}
is continuous, monotonically decreasing and tends to $\lambda_{s-1} \big(  F^i_j  \cap B_\rho(0) \big)$ as 
$\eta \rightarrow 0+$.
 
We have gathered all necessary information to continue in \eqref{gradient_formula} as
\begin{align*}
&\int \mu\big( \bigcup_{l \in I(u,\nu)(\tau)} M_l(x, \eta) \big\backslash
\bigcup_{l \in I(u,\nu)(\tau)} M_l(x+u, \eta+\nu) \big) \, \text{d}\tau \\
&\overset{\eqref{chain2}}{\geq} \int_{-d_i^\intercal u - \nu}^{-d_j^\intercal u - \nu}
\mu\big( M_i(x,\eta) \big\backslash \bigcup_{\{ k : d_k^\intercal u > d_j^\intercal u \}} 
M_k(x+u, \eta+\nu) \big) \, \text{d}\tau \\
&\overset{\eqref{est1}}{\geq}\alpha \|u\| \: \mu\big( M_i(x,\eta) \big\backslash \bigcup_{\{ k : d_k^\intercal u > d_j^\intercal u \}} 
M_k(x+u, \eta+\nu) \big) \\
&\overset{\eqref{incl1_1}}{\geq} \alpha \|u\| \: \int_{\bigcup_{0 \leq \lambda < 1} 
 [ F^+_{i,j}(x,\eta^-) \cap B_{\tilde{\rho}}(x+\hat{y}^j(\eta^-)) ]+ \lambda u}
 \theta(z) \, \text{d}z \\
&\overset{\eqref{incl1}}{\geq} \alpha \, r \: \|u\| \: \lambda\big( \bigcup_{0 \leq \lambda < 1} 
\big( [ F^+_{i,j}(x,\eta^-) \cap B_{\tilde{\rho}}(x+\hat{y}^j(\eta^-)) ]+ \lambda u \big) \\
&= \alpha \, r \: \|u\| \: \lambda\big( \bigcup_{0 \leq \lambda < 1} 
\big( [ F^+_{i,j}(0,\eta^-) \cap B_{\tilde{\rho}}(\hat{y}^j(\eta^-)) ]+ \lambda u \big) \\
&\overset{\eqref{cyl_measure}}{\geq} \alpha^2 \, \beta \, r \, \|u\|^2.
\end{align*}

In the first inequality the nonnegativity of the integrand and in the only equality the translation invariance
of the Lebesgue measure was used.\\ \\

Choose now some $0 < \eta^- \leq \eta^+$ with 
\begin{align}\label{eta_plus}
\| \hat{y}^j(\eta^+) \| = \min_{k=1,\ldots,f_i} \| \hat{y}^j_k(\eta^+) \| < \rho
\end{align}
and set
\begin{align*}
\tilde{\rho} = \rho - \| \hat{y}^j(\eta^+) \|.
\end{align*}
Now consider the case $\eta^- < \eta \leq \eta^+$:\\
We will choose $\left(-d_i^\intercal u - \nu, 0 \right)$ as the area of integration in \eqref{gradient_formula}.
As the integration variable $\tau$ satisfies  $-d_i^\intercal u - \nu < \tau < 0$ we can find a subset of the one in
\eqref{gradient_formula} as
\begin{align}\label{another_inclusion_1}
\bigcup_{k \in I(u,\nu)(\tau)} M_k(x, \eta) \big\backslash
\bigcup_{k \in I(u,\nu)(\tau)} M_k(x+u, \eta+\nu)
\supset M_i(x,\eta) \big\backslash \bigcup_{k: -d_k^\intercal u - \nu < 0} M_k(x+u,\eta+\nu).
\end{align}

In analogy with the notation used in the previous case set 
\begin{align*}
F_{i,0}(x,\eta) = (x+K_i) \cap \{ d_i^\intercal (z-x) = \eta \}.
\end{align*}

It holds
\begin{align}\label{another_inclusion_2}
\bigcup_{0 \leq \lambda < 1}
\big( [ F_{i,0}(x,\eta) \cap B_{\tilde{\rho}}(x+\hat{y}^i(\eta))  ] + \lambda u \big)
&\subset V+B_\rho(0), \\
\label{another_inclusion_3}
\bigcup_{0 \leq \lambda < 1}
\big( [ F_{i,0}(x,\eta) \cap B_{\tilde{\rho}}(x+\hat{y}^i(\eta))  ] + \lambda u \big)
&\subset M_i(x,\eta) \big\backslash \bigcup_{k:-d_k^\intercal u - \nu < 0}M_k(x+u,\eta+\nu).
\end{align}
Inclusion \eqref{another_inclusion_2} follows the same way as \eqref{incl1}. The proof for \eqref{another_inclusion_3} 
works similar to the one for \eqref{incl1_1} with minor modifications:\\
Let $x+z+\lambda u$ with $z \in K_i \cap \{ d_i^\intercal z' = \eta \}$ and $0 \leq \lambda < 1$.
Again $x+z+\lambda u \in x+K_i$ due to $K_i$ being a convex cone.  Furthermore
\begin{align*}
d_i^\intercal (x+z+\lambda u - x) = d_i^\intercal z + \lambda d_i^\intercal u \geq \eta
\end{align*}
because $d_i^\intercal z = \eta$ and $d_i^\intercal u \geq 0$. Thus $x+z+\lambda u \in M_i(x,\eta)$ has been established.
Pick any index $k \in I$ with $-d_k^\intercal u - \nu < 0$. It holds
\begin{align*}
d_k^\intercal (x+z+\lambda u - x - u) = d_k^\intercal z + (\lambda - 1) d_k^\intercal u
< \eta +  (\lambda - 1) d_k^\intercal u < \eta + (1-\lambda) \nu \leq \eta +\nu,
\end{align*}
so we have $x+z+\lambda u \notin M_k(x+u,\eta+\nu)$. In the last inequality we have used $\nu \geq 0$. \\

As the last ingredient we will show that there exist constants $\alpha', \beta' > 0$
\begin{align}\label{cyl_measure_2}
\lambda\big( \bigcup_{0 \leq \lambda < 1}
\big( [ F_{i,0}(0,\eta^-) \cap B_{\tilde{\rho}}(\hat{y}^i(\eta^-))  ] + \lambda u \big) \big)  
\geq \alpha' \, \beta' \, \|u\|
\end{align}
To this end set 
\begin{align}\label{cyl_base_2}
\beta' = \lambda_{s-1}\big( F_{i,0}(0, \eta^-) \cap B_{\tilde{\rho}}(\hat{y}^i(\eta^-) )  \big) > 0
\end{align}
which is positive by construction. Applying Lemma \ref{lemma1} (i) and Cavalier's principle yields \eqref{cyl_measure_2}. 
Having a lower bound on the measure of the set $ F_{i,0}(0, \eta^-) \cap B_{\tilde{\rho}}(\hat{y}^i(\eta^-) )$ 
is the reason why we need to have to distinct the cases $-\infty < \eta < \eta^-$ and $ \eta^- \leq \eta \leq  \eta^+$ 
in the first place. 

With this we can continue \eqref{gradient_formula} as
\begin{align*}
\eqref{gradient_formula} &\overset{\eqref{another_inclusion_1}}{\geq} \int_{-d_i^\intercal - \nu}^{0} 
\mu\big( M_i(x,\eta) \big\backslash
\bigcup_{k: -d_k^\intercal u - \nu < 0}M_k(x+u,\eta+\nu)  \big) \, \text{d}\tau  \\
                      &\overset{\eqref{est2}}{\geq} \, \|u\| \;  \mu\big( M_i(x,\eta) \big\backslash
\bigcup_{k: -d_k^\intercal u - \nu < 0}M_k(x+u,\eta+\nu)  \big) \\
                      &\overset{\eqref{another_inclusion_2}}{\geq} \alpha' \, \|u\| \; \mu\big( \bigcup_{0 \leq \lambda < 1}
\big( [ F_{i,0}(x,\eta) \cap B_{\tilde{\rho}}(x+\hat{y}^j(\eta))  ] + \lambda u \big) \big) \\
                      &= \alpha' \, \|u\| \; \mu\big( \bigcup_{0 \leq \lambda < 1}
\big( [ F_{i,0}(0,\eta^-) \cap B_{\tilde{\rho}}(\hat{y}^j(\eta^-))  ] + \lambda u \big) \big) \\
                     &\overset{\eqref{another_inclusion_3}}{\geq} \alpha' \, r \, \|u\| \; \lambda\big( \bigcup_{0 \leq \lambda < 1} 
\big( [ F_{i,0}(0,\eta^-) \cap B_{\tilde{\rho}}(\hat{y}^j(\eta^-))  ] + \lambda u \big) \big) \\
                    &\overset{\eqref{cyl_measure_2}}{\geq} \alpha'^2 \, r \, \beta' \, \|u\|^2.
\end{align*}
As before, nonnegativity of the integrand is used in the first step. Translation invariance of the Lebesgue measure is exploited in the only equation.\\

All constants $\alpha,\alpha',\beta$ and $\beta'$ computed until now implicitly depend on the index $i$ for which we have $u \in K_i$. But since
the index set $I$ is finite so is the number of such constants. Choosing minimal constants thus concludes the proof of the first 
part.
\qed
\end{proof}

That fact that the sets  $M_l(x,\eta)$ have a simple geometry, i.e. they are pointed cones or 
truncated cones, is one of the key arguments in the proof (cf. Fig. 2 below).
This geometry allowed us to explicitly construct cylindrical sets (\eqref{incl1_1} resp. \eqref{another_inclusion_2}) for the estimates needed after \eqref{gradient_formula}.
The size of these cylindrical sets is dictated by the model data, e.g. $\rho$, and a certain degree of freedom when choosing
$\eta^-$ and $\eta^+$. With a little more effort the constants $\beta$ and $\beta'$ which depend on the choice of
$\eta^-$ and $\rho$ explicitly (and on $d_1,\ldots,d_N$ implicitly) can be maximized to yield a partial strong 
convexity constant as large as possible. In principle the modulus of partial strong convexity resp. of restricted partial
strong convexity (in the next theorem) can thus be computed directly in terms of model data. 
The remarks after Theorem \ref{theorem0}. also apply in the setting of Theorem \ref{theorem2}.\\
As a last remark to the preceding theorem we point out that once a $\eta^- > 0$ has been fixed, the arguments
in the proof can be modified to show that A6 implies strong convexity of $Q_{EE}$ in both arguments with strong convexity constant depending on $\eta^-$.

\begin{figure}
  \caption{Truncated cones for varying $\eta$}
  \centering
    \includegraphics[width=1.0\textwidth, trim={0 6cm 0 6cm},clip]{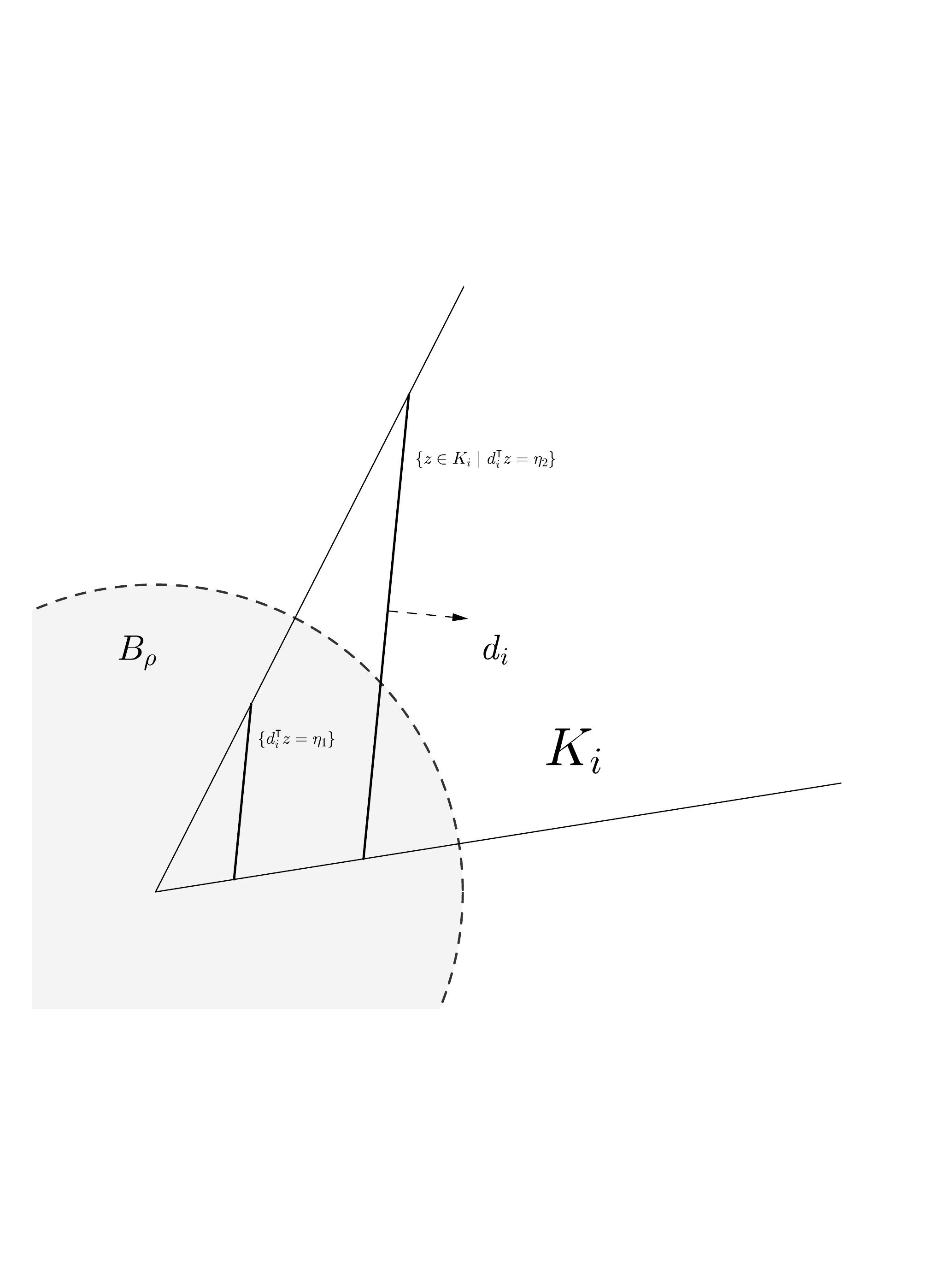}
\end{figure}

Next we will consider the more general case when A6 fails to hold and prove restricted partial strong convexity of 
$Q_{EE}$ wrt. the first argument in the sense of Definition \ref{def2}. In the last theorem $u$ and $\nu$ varied independently
of each other, but as example 3 shows, we need to make some new assumptions which tie together $u$ and $\nu$ 
in order for restricted partial strong convexity to hold. This necessitates more case distinctions and technicalities, mainly due
to the following two facts:
 \begin{itemize}
\item[1.] The interplay between choosing the area of integration in \eqref{gradient_formula} and 
constructing suitable subsets of the set in the integrand become more subtle.
\item[2.] The geometry of the sets $M_i^+$ can be slightly more complicated. 
\end{itemize}

To avoid being overly repetitive in the proof of the next theorem, we will borrow notation from the preceeding one and 
mostly point out where changes need to be made to the last proof to accomodate the new situation,
i.e. where two main consequences of A6 - being able to reflect to $\nu \geq 0$ when necessary and the lower 
bound on $d_i^\intercal u$ from \eqref{est2} - were used.

We feel that it is still convenient to separate the two theorems: Firstly, to not obscure
the general structure of the proof by even more case distinctions. Secondly, because most of the discussions in section 3
only make use of Theorem \ref{theorem2} anyway. \\

We start with some simple geometric observations which apply when A6 is discarded:\\
Before Theorem \ref{theorem0}. we introduced the notation $K_i^+$ and $I^+$. For $\eta > 0$ we
had observed that for $i \in I^+$ the hyperplane $\{ d_i^\intercal z = \eta \}$ intersects $K_i$ (and also $K_i^+$) 
in some of its extreme rays rays. We denote a point of intersection (say with $\{ r \, t^i \ | \ r\geq 0 \}$) having minimum
norm among all such points as $\hat{y}^i$. Assume that $t^i$ has norm $1$ and set $\gamma = d_i^\intercal t^i$. 
We then have the estimate 
\begin{align}\label{slice}
d_i^\intercal u \leq \gamma \|u\| \text{ for all $u \in K_i^+$.}
\end{align}

The hyperplane $\{ d_i^\intercal z = \eta \}$ also slices $K_i$ into two polyhedra 
$M_i^+(0,\eta) =K_i \cap \{ d_i^\intercal z \geq \eta \}$
and $M_i^-(0,\eta) = K_i \cap \{ d_i^\intercal z \leq \eta \}$. Denote with $I^\pm$ indices $i \in I^+$ such that 
both polyhedra are unbounded - it is $I^\pm \neq \emptyset$ if and only if A6 fails to hold - 
and with $I^{++}$ indices in $I^+$ such that only $M_i^+(0,\eta)$ is unbounded  
- which holds iff there is some $\alpha'>0$ with 
\begin{align}\label{slice3}
d_i^\intercal u \geq \alpha' \|u\| \text{ for all $u \in K_i$. }
\end{align}
Obviously $I^{++} \cap I^\pm = \emptyset$ and $I^{++} \cup I^\pm = I^+$.\\
For $i \in I^\pm$ we note that by inequality \eqref{slice} we can write
\begin{align}\label{slice2}
K_i^+ = K_i^{++} \cup K_i^\pm
\end{align}
with two full-dimensional polyhedral cones $K_i^{++}  = K_i^+ \cap \{ u \ | \ d_i^\intercal u \geq \gamma' \|u\| \}$
and $K_i^\pm  = K_i^+ \cap \{ u \ | \ d_i^\intercal u \leq \gamma' \|u\| \}$ 
(choose for example $\gamma' = \frac{\gamma}{2}$).

\begin{Theorem}[Restricted partial strong convexity of $Q_{EE}$]\label{theorem3}
Let A1-A5 hold.\\Then $Q_{EE}(x,\eta)$ is restricted partially strongly convex on the set 
\begin{align*}
\Omega = \{ (x,\eta) \in V_{\eta_0} \ | \ \eta' - \eta \geq -\frac{\delta}{3} \| y-x \| \ \forall (y,\eta') \in V_{\eta_0} \ : 
\ y-x \in K_i, \ i \in I^+\},
\end{align*}
where $\delta = \max\{ \alpha'_i, \gamma_j' \ | \ i \in I^{++}, j \in I^\pm \}$ with $\alpha'_i$ and $\gamma_j'$ from 
\eqref{slice3} and \eqref{slice2}. For the definition of $V_{\eta_0}$ cf. Theorem \ref{theorem2} above.
\end{Theorem}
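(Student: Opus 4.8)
The plan is to verify the restricted monotonicity inequality \eqref{SCformula} for every pair $(x,\eta),(x+u,\eta+\nu)\in\Omega$; since $Q_{EE}$ is continuously differentiable (as established above), the restricted analogue of Lemma \ref{lemma2} then delivers restricted partial strong convexity on $\Omega$. As in the proof of Theorem \ref{theorem2} I would begin from the integral representation \eqref{gradient_formula} and choose an index $i$ with $u\in K_i$. The two features of A6 that powered that proof were the freedom to reflect to $\nu\ge 0$ and the uniform bound \eqref{est2} on $d_i^\intercal u$; with A6 discarded, both must be replaced by weaker quantitative substitutes drawn from the definition of $\Omega$. Exploiting the symmetry of the left-hand side of \eqref{SCformula} under the swap $(x,\eta)\leftrightarrow(x+u,\eta+\nu)$, I would first reorient so that $u$ lies in $K_i^+$ for some $i\in I^+$, with $(x,\eta)$ playing the role of base point. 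Then $(x,\eta)\in\Omega$ together with $u\in K_i$, $i\in I^+$, yields precisely the one-sided estimate $\nu\ge-\tfrac{\delta}{3}\|u\|$. This is exactly what the (non-cylindrical) set $\Omega$ is engineered to provide: it couples $u$ and $\nu$ tightly enough to rule out the cancellation $\nu=-u$ that destroyed strong convexity in Example 3.

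I would then split according to the geometry of $K_i$. For $i\in I^{++}$, and for $u\in K_i^{++}$ when $i\in I^\pm$, the decompositions \eqref{slice3} and \eqref{slice2} give $d_i^\intercal u\ge\delta'\|u\|$ with $\delta'\in\{\alpha_i',\gamma_j'\}$. These are the ``A6-like'' directions: the integration windows and the cylindrical subsets built in \eqref{incl1}--\eqref{cyl_measure} and \eqref{another_inclusion_1}--\eqref{cyl_measure_2} transfer with only minor changes. For the window of the first type, $(-d_i^\intercal u-\nu,-d_j^\intercal u-\nu)$, the length equals $(d_i-d_j)^\intercal u\ge\alpha\|u\|$ by Lemma \ref{lemma0}(iii) and is independent of $\nu$, so it is unaffected by the sign of $\nu$; for the window of the second type, $(-d_i^\intercal u-\nu,0)$, the length is $d_i^\intercal u+\nu$, and the margin $\tfrac{\delta}{3}$ is calibrated so that, combined with $d_i^\intercal u\ge\delta'\|u\|$, this window retains a fixed positive multiple of $\|u\|$. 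The measure of the swept cylinder is bounded below exactly as before via A4 and A5, so the product is again $\gtrsim\|u\|^2$.

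The main obstacle is the genuinely new regime $i\in I^\pm$ with $u\in K_i^\pm$, where $d_i^\intercal u$ may be an arbitrarily small fraction of $\|u\|$ (only $d_i^\intercal u\le\gamma'\|u\|$ is available) and $M_i^+(0,\eta)$ carries the two-sided truncated-cone geometry flagged in item 2 preceding the theorem. Here the second-type window $(-d_i^\intercal u-\nu,0)$ can degenerate, so one must fall back on the $\nu$-independent window of length $\ge\alpha\|u\|$ from Lemma \ref{lemma0}(iii); the difficulty is that this window goes hand in hand with slicing at the level $\eta^-$ of \eqref{eta_minus}, which in Theorem \ref{theorem2} only sufficed for $\eta\le\eta^-$, whereas now $\eta$ ranges up to $\eta_0$. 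Reconciling the two-window structure of Theorem \ref{theorem2} with a small $d_i^\intercal u$ and an only one-sided control of $\nu$ — so that across the whole range $\eta\le\eta_0$ some window of length $\gtrsim\|u\|$ survives, and so that the constructed cylinder inside $M_i(x,\eta)\setminus\bigcup_k M_k(x+u,\eta+\nu)$ stays inside $V+B_\rho(0)$ and avoids every shifted competing cone $x+u+K_k$ — is where the coupling in $\Omega$ and the $\tfrac{\delta}{3}$ slack are spent a second time. I expect verifying this non-intrusion inclusion, the analogue of \eqref{incl1_1} and \eqref{another_inclusion_3} for the two-sided truncated geometry, to be the most technical step of the argument.

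Finally, since $I$ is finite, the construction produces only finitely many positive constants $\alpha,\alpha_i',\gamma_j',\beta,\beta'$ across the cases and the finitely many cones; taking their minimum yields a single modulus $\kappa>0$ valid for all admissible $(u,\nu)$, which establishes \eqref{SCformula} on $\Omega$ and hence the claimed restricted partial strong convexity.
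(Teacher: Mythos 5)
Your overall architecture coincides with the paper's: monotonicity via Lemma \ref{lemma2}, the representation \eqref{gradient_formula}, reflection so that $u \in K_i^+$ with $i \in I^+$ (legitimate because \emph{both} points lie in $\Omega$, so whichever orientation puts the increment in a cone from $I^+$ yields $\nu \geq -\frac{\delta}{3}\|u\|$), the split $I^{++}$ versus $I^\pm$ and, inside $I^\pm$, the split $u \in K_i^{++}$ versus $u \in K_i^\pm$ via \eqref{slice2}, cylinder constructions combined with the density bound from A4/A5, and a final minimum over finitely many constants. One local deviation: for $i \in I^{++}$ and $\eta^- \leq \eta \leq \eta^+$ the paper does not use your window $(-d_i^\intercal u - \nu, 0)$ of $\nu$-dependent length, but the anchored window $[-d_i^\intercal u - \nu, \, -d_i^\intercal u - \nu + \frac{\alpha}{3}\|u\|)$ of fixed length, spending the $\frac{\delta}{3}$-slack entirely inside the non-intrusion estimate (the chain ending in $\eta - \frac{1}{3}\alpha\|u\| \leq \eta + \nu$). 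Your variant needs $\delta' > \delta/3$ for the \emph{specific} cone at hand, which is only guaranteed if $\delta$ is read as the minimum of the per-cone constants; the paper's displayed maximum appears to be a typo and its own chain has the same dependence, so this is a shared bookkeeping issue rather than an error of yours.

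The genuine gap is where you stop, namely $i \in I^\pm$, and the missing idea is the paper's negative-level slicing. For $i \in I^\pm$ the paper chooses $\eta^- < 0$ such that $\|\hat{y}^i(\eta^-)\| < \rho$, where $\hat{y}^i(\eta^-)$ is a minimum-norm intersection of the hyperplane $\{d_i^\intercal z = \eta^-\}$ with the extreme rays of the lower cone $K_i^- = K_i \cap \{z \mid d_i^\intercal z \leq 0\}$. Then for every $\eta \leq \eta^-$ the facet pieces $(x + F^i_j) \cap B_{\|\hat{y}^i(\eta^-)\|}(x)$ — balls centered at $x$ itself — are contained in $F^+_{i,j}(x,\eta)$ with positive $(s-1)$-dimensional measure, and they serve as cylinder bases together with the $\nu$-independent window $(-d_i^\intercal u - \nu, -d_j^\intercal u - \nu)$ from \eqref{est1}. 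Your diagnosis that this window ``goes hand in hand with slicing at the level $\eta^-$'' of \eqref{eta_minus} is partly a misreading: the non-intrusion argument for the first-type window (as in \eqref{incl1_1}) excludes membership in the shifted cone $x + u + K_k$ outright and never touches $\nu$ or $\eta + \nu$, so the construction is not tied to a positive slicing level — once $\eta^-$ may be negative, the entire range $\eta \leq \eta^-$ is covered, and the problematic upper range is only the band $\eta^- \leq \eta \leq \eta^+$. There the paper splits $u \in K_i^{++}$ (where $d_i^\intercal u \geq \gamma'\|u\|$ lets the $I^{++}$ argument run) from $u \in K_i^\pm$, and for the latter subcase it, too, merely asserts that the integration windows and inclusions ``must be done appropriately as demonstrated before.'' So your honest flagging of that final subcase matches the paper's own level of detail, but a complete proof along your outline would still owe both the negative-$\eta^-$ device and the $u \in K_i^\pm$ construction in the middle band.
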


\begin{proof}
Let $(x,\eta),(x+u, \eta +\nu) \in \Omega$ so that we have 
\begin{align}\label{nu_condition}
\nu \geq -\frac{\delta}{3} \|u\|
\end{align}
as required by the definition of $\Omega$.
In \eqref{SCformula} we may, after a suitable change of variables, 
assume that $u \in K_i^+$ with $i \in I^+ = I^{++} \cup I^\pm$.
We can however not take $\nu \geq 0$ as granted.
Since the case $i \in I^{++}$ is structurally more similar to the ones already treated, we shall start
with this one.  Drawing on condition A5 choose again 
$0 < \eta^- \leq \eta^+$ so that conditions \eqref{eta_minus} and \eqref{eta_plus} are fulfilled.
Let us first consider the case  $\eta^- \leq \eta \leq \eta^+$:\\
We need to choose the area of integration in \eqref{gradient_formula} differently as before:
This time it shall be
\begin{align*}
-d_i^\intercal u - \nu \leq \tau < -d_i^\intercal u - \nu + \frac{\alpha}{3}\|u\|.
\end{align*}
Consequently we need to replace \eqref{another_inclusion_1} by 
\begin{align*}
\bigcup_{k \in I(u,\nu)(\tau)} M_k(x, \eta) \big\backslash
\bigcup_{k \in I(u,\nu)(\tau)} M_k(x+u, \eta+\nu) 
\supset M_i(x,\eta) \big\backslash \bigcup_{k: -d_k^\intercal u < \frac{\alpha}{3}\|u\| - d_i^\intercal u} M_k(x+u,\eta+\nu).
\end{align*}
Inclusions \eqref{another_inclusion_2} and \eqref{another_inclusion_3} must be replaced by
\begin{align}
\bigcup_{0 \leq \lambda < \frac{1}{2}}
\big( [ F_{i,0}(x,\eta) \cap B_{\tilde{\rho}}(x+\hat{y}^i(\eta))  ] + \lambda u \big)
&\subset V+B_\rho(0) \label{another_another_inclusion_2}\\
\bigcup_{0 \leq \lambda < \frac{1}{2}}
\big( [ F_{i,0}(x,\eta) \cap B_{\tilde{\rho}}(x+\hat{y}^i(\eta))  ] + \lambda u \big)
&\subset M_i(x,\eta) \big\backslash \bigcup_{k: -d_k^\intercal u < \frac{\alpha}{3}\|u\| - d_i^\intercal u} M_k(x+u,\eta+\nu)
\label{another_another_inclusion_3}
\end{align}
where only
\begin{align*}
\bigcup_{0 \leq \lambda < \frac{1}{2}}
\big( [ F_{i,0}(x,\eta) \cap B_{\tilde{\rho}}(x+\hat{y}^i(\eta))  ] + \lambda u \big) \bigcap
\bigcup_{k: -d_k^\intercal u < \frac{\alpha}{3}\|u\| - d_i^\intercal u} M_k(x+u,\eta+\nu) = \emptyset
\end{align*}
needs justification:\\ For any $0 \leq \lambda < \frac{1}{2}$, $z \in F_{i,0}(x,\eta)$ and index $k \in I$ with
$-d_k^\intercal u < \frac{\alpha}{3}\|u\| - d_i^\intercal u$ it holds
\begin{align*}
d_k^\intercal (z + \lambda u - x - u) &= d_k^\intercal (z-x) + (1-\lambda)(-d_k^\intercal u) 
\leq d_i^\intercal (z-x) + (1-\lambda)(-d_k^\intercal u) \\ 
&= \eta + (1-\lambda)(-d_k^\intercal u) <  \eta + (1-\lambda) ( \frac{\alpha}{3}\|u\| - d_i^\intercal u) \\
& <  \eta + (1-\lambda) ( \frac{\alpha}{3}\|u\| - \alpha \|u\|) = \eta + (1-\lambda) ( -\frac{2 \alpha}{3}\|u\|) \\
& \leq \eta - \frac{1}{3} \alpha \|u\| \leq \eta + \nu.
\end{align*}
With \eqref{another_another_inclusion_2} and \eqref{another_another_inclusion_3} at hand the remaining estimates
are analogous to the ones made before. The case $-\infty < \eta \leq \eta^-$ is identical to the one in (i).  \\

For $i \in I^\pm$ we shall construct $\eta^-$ a little different than before.\\
Let us also assume that the set $K_i^- = K_i \cap \{ z \ | \ d_i^\intercal z \leq 0 \}$ has nonempty interior. 
The other case can be handled in a similar way. \\
We then find that for $\eta < 0$ the hyperplane $\{d_i^\intercal z = \eta \}$ intersects the extreme rays of
$K^-$ in singletons, let $\hat{y}^i = \hat{y}^i(\eta)$ denote one point of intersection with minimum norm. 
Choose $\eta^-$ < 0 so that $\| \hat{y}^i(\eta^-) \| < \rho$ and set $\tilde{\rho} = \rho - \| \hat{y}^i(\eta^-) \|$.\\
For arbitrary $-\infty < \eta \leq \eta^-$ we see that for all $j$ with $d_j$ adjacent to $d_i$ it holds
\begin{align*}
(x+F^i_j) \cap B_{\| \hat{y}^i(\eta^-) \|}(x) \subset F^+_{i,j}(x,\eta) 
\end{align*}
and 
\begin{align*}
\lambda\big( (x+F_{i,j}) \cap B_{\| \hat{y}^i \|}(x) \big) > 0.
\end{align*}
there first inclusion holding true since $\| \hat{y}^i \|(\eta)$ is monotonically increasing in $\eta$, the second
one bcause $\eta^- < 0$. \\

With this and resorting to \eqref{est1} we can be continue \eqref{gradient_formula} using as area of integration 
$-d_i^\intercal u -\nu < \tau < -d_j^\intercal u -\nu$. In \eqref{incl1} and \eqref{incl1_1}
the left hand side needs to be replaced by
\begin{align*}
\bigcup_{0 \leq \lambda < 1} \big( (x+F^i_j) \cap B_{\| \hat{y}^i(\eta^-) \|}(x)+ \lambda u \big),
\end{align*}
everything else is straightforward from thereon.\\

Now consider $\eta^- \leq \eta \leq \eta^+$ and - employing \eqref{slice2} - look at the cases\\
$u \in K_i^+ \cap \{ d_i^\intercal u' \leq \gamma' \|u'\| \}$
and $u \in K_i^+ \cap \{ d_i^\intercal u' \geq \gamma' \|u'\| \}$ separately.\\ 
For each of the two cases the area of integration in \eqref{gradient_formula} and estimates for the integrands
(as seen in \eqref{chain2}, \eqref{incl1} and \eqref{incl1_1}) must be done appropriately as demonstrated before.
\qed
\end{proof}

\section{CVaR based models}
We shall now discuss implications of the preceding results for models extending \eqref{OptProblem1}
by replacing the expectation-functional with the the conditional value at risk $\alpha CVaR$:
\begin{align}\label{OptProblem3}
\min\{ \ \alpha CVaR[ \, h(\xi) +  \varphi(z(\omega) - T\xi) \, ] \ | \ \xi \in X \}.
\end{align}
By translation equivariance of $\alpha CVaR$ and the same arguments as above we can rewrite
this problem as
\begin{align}\label{OptProblem4}
\min \{\hat{h}(x) + Q_{\alpha CVaR}(x) \ | \ x \in T(X) \}
\end{align}
with
\begin{align}\label{QCVaR}
Q_{\alpha CVaR}(x) = \alpha CVaR[ \, \varphi(z-x) \, ].
\end{align}
Theorem \ref{theorem2} and the discussion after Lemma \ref{MonMap} yields 
\begin{theorem}[Strong convexity of $Q_{\alpha\,CVaR}$] \label{ThCVaR}
Assume A1-A6 (in particular, there is some $\eta_0 > 0$ satisfying A5) 
and the following condition
\begin{equation}
\label{CVaRCond}
\sup_{x \in V} Q_{\alpha VaR}(x) \leq \eta_0.
\end{equation}
Then $Q_{\alpha \, CVaR}$ is $\frac{\kappa}{\alpha}$-strongly convex on $V$
with $\kappa$ being the modulus of partial strong convexity for $Q_{EE}$ for $\eta \leq \eta_0$.
\qed
\end{theorem}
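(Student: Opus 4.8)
The plan is to reduce the claim entirely to the variational representation \eqref{varrep} together with Theorem \ref{theorem2}, so that no genuinely new estimate is required beyond matching up domains. Under A1--A6, Theorem \ref{theorem2} guarantees that $Q_{EE}$ is partially $\kappa$-strongly convex with respect to its first argument on the cylindrical set $V_{\eta_0} = V \times (-\infty, \eta_0]$. In the language of Definition \ref{def1} this is precisely partial strong convexity with $W = (-\infty, \eta_0]$, and the modulus $\kappa$ is exactly the one named in the statement.

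The only hypothesis that needs to be supplied before invoking the computation presented after Definition \ref{def1} is the inclusion $Q_{\alpha VaR}(V) \subset W$. Since $W = (-\infty, \eta_0]$, this inclusion reads $Q_{\alpha VaR}(x) \leq \eta_0$ for all $x \in V$, which is exactly condition \eqref{CVaRCond}. Under A1--A4 the Value-at-Risk is finite, and by \eqref{varrep2} the threshold $\eta_x := Q_{\alpha VaR}(x)$ is an optimiser of the variational representation, so $Q_{\alpha CVaR}(x) = \eta_x + \frac{1}{\alpha} Q_{EE}(x, \eta_x)$. In particular every pair $(x, \eta_x)$ lies in $V_{\eta_0}$, and because $W$ is a half-line it is convex, so convex combinations of admissible thresholds remain admissible and the partial strong convexity inequality may be applied to them.

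With these observations the argument is verbatim the chain of inequalities displayed after Definition \ref{def1}: for $x_1, x_2 \in V$ and $\lambda \in (0,1)$ set $\eta_i = Q_{\alpha VaR}(x_i) \in W$, bound $Q_{\alpha CVaR}(\lambda x_1 + (1-\lambda)x_2)$ from above by evaluating the variational representation at the suboptimal threshold $\lambda \eta_1 + (1-\lambda)\eta_2$, apply partial strong convexity of $Q_{EE}$ on $V_{\eta_0}$ to the pairs $(x_1, \eta_1)$ and $(x_2, \eta_2)$, and finally invoke the optimality identity \eqref{varrep2} to recognise $\eta_i + \frac{1}{\alpha} Q_{EE}(x_i, \eta_i)$ as $Q_{\alpha CVaR}(x_i)$. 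This produces the $\frac{\kappa}{\alpha}$-strong convexity inequality on $V$.

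The proof therefore carries essentially no obstacle of its own: all the analytic difficulty has already been absorbed into Theorem \ref{theorem2}, and the transfer to $Q_{\alpha CVaR}$ is the soft argument sketched after Definition \ref{def1}. The single point deserving care is the domain bookkeeping---that the threshold set for which partial strong convexity is available is exactly the half-line $(-\infty, \eta_0]$, and that \eqref{CVaRCond} is precisely the scalar bound forcing the optimal thresholds $Q_{\alpha VaR}(x)$ into this set. Had A6 failed, only Theorem \ref{theorem3} would be available and one would instead have to verify the more delicate inclusion $(x, Q_{\alpha VaR}(x)) \in \Omega$ for the non-cylindrical set $\Omega$, which cannot be guaranteed from A3--A4 alone (cf.\ Example 4); the role of A6 here is exactly to replace that inclusion by the single condition \eqref{CVaRCond}.
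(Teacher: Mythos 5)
Your proposal is correct and follows essentially the same route as the paper, which proves Theorem \ref{ThCVaR} simply by combining Theorem \ref{theorem2} with the chain of inequalities displayed after Definition \ref{def1}, where condition \eqref{CVaRCond} supplies exactly the inclusion $Q_{\alpha VaR}(V) \subset (-\infty,\eta_0]$ that you single out as the key domain bookkeeping. Your closing observation on why A6 (rather than the restricted statement of Theorem \ref{theorem3}) is what makes the soft transfer argument work also matches the paper's own discussion at the end of Section 1.2.
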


Let us make some remarks on this theorem:\\
Since $\alpha \mapsto Q_{\alpha VaR}(Y)$ is nonincreasing for fixed $Y$, condition \eqref{CVaRCond} will hold \\ 
for all $\alpha \leq \alpha' \leq 1$ if it holds for $\alpha$. It follows that $Q_{\alpha' CVaR}$ is strongly convex
for all such $\alpha'$.\\

There is some heuristic on when one can
hope for $Q_{\alpha \, CVaR}$ to be strongly convex:
We have that $Q_{\alpha CVaR} \equiv Q_\mathbb{E}$ for $\alpha = 1$ which
is strongly convex given the usual assumptions made above.
If $1 \geq \alpha \geq \alpha_0$ for some $\alpha_0$ which is not too 
close to $0$ condition \eqref{CVaRCond} might still hold.
When $\alpha \rightarrow 0+$ the quantity $Q_{\alpha VaR}(x)$ will
increase and condition \eqref{CVaRCond} might be violated.\\ \\
If not on $V$ it might still be possible to show strong convexity on some subset $U$ of $V$ for two reasons:
Firstly, on $U$ conditions A5 is weaker since a larger $\rho$ can be chosen so that condition A5 holds.
Secondly, in \eqref{CVaRCond} we have the obvious estimate  
$\sup_{x \in V} Q_{\alpha VaR}(x) \geq \sup_{x \in U} Q_{\alpha VaR}(x)$.
We give an academic example to illustrate these points in the appendix, cf. example 5 there.\\

If $0 \in V$, the following (very rough) upper bound for the value-at-risk might also be helpful: Set $d = \max_{i \in I} \|d_i\|$, then for any $x \in U \subseteq V$ and $z \in \mathbb{R}^s$ we have
$$
\varphi(z-x) = \max_{i=1,\ldots, N} d_i^\intercal(z-x) \leq d\|z\| + d \max_{x \in U} \|x\|.
$$
Thus,
\begin{align*}
Q_{\alpha VaR}(x) \; &\leq \;   \inf \Big\{ t \in \mathbb{R} \; | \; \mu(\lbrace z \; | \; \|z\| \leq \frac{t}{d} - \max_{x \in U} \|x\| \rbrace) \geq \alpha \Big\} \\
&= \; d \inf \Big\{ t \; | \; \mu(B_t(0)) \geq \alpha \Big\} + d \max_{x \in U} \|x\|.
\end{align*}
The above quantity is finite by the tightness of the probability measure $\mu$ and the boundedness of $U$. Set $\bar{\eta} := \inf \left\{ t \; | \; \mu(B_t(0)) \geq \alpha \right\}$. As direct consequence of the above considerations, we obtain the following:

\begin{proposition}
If $Q_{EE}(\cdot;d \bar{\eta} + \epsilon)$ is $\kappa$-strongly convex on $V$ for some $\epsilon > 0$, then $Q_{\alpha CVaR}$ is strongly convex with modulus $\frac{\kappa}{\alpha}$ on any nonempty, open, convex $U \subseteq V$ satisfying
$$
\max_{x \in U} \|x\| \leq \frac{\epsilon}{d}.
$$
\end{proposition}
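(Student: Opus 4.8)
The plan is to read the statement as a corollary of Theorem \ref{theorem2} and of the calculation carried out immediately after Lemma \ref{lemma2}, with the rough value-at-risk bound established just above the Proposition supplying the one inclusion that calculation requires. I work under the standing assumptions A1--A6 of Theorem \ref{ThCVaR}, and this is the essential point: strong convexity of $Q_{EE}(\cdot,\eta)$ for a single fixed $\eta$ is by itself \emph{not} enough to force strong convexity of $Q_{\alpha CVaR}$ (Example 4 exhibits exactly this failure once A6 is dropped), so the hypothesis must be used in tandem with A6, which upgrades fixed-threshold strong convexity to genuine partial strong convexity over a whole strip of thresholds.

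Concretely, first I would set $\eta_0 := d\,\bar{\eta} + \epsilon$ and invoke Theorem \ref{theorem2}: under A1--A6 with condition A5 holding at level $\eta_0$ -- which is what the hypothesis ``$Q_{EE}(\cdot;\eta_0)$ is $\kappa$-strongly convex on $V$'' provides (cf. Theorem \ref{theorem0}) -- the functional $Q_{EE}$ is partially $\kappa$-strongly convex on the strip $V \times (-\infty,\eta_0]$, with the modulus $\kappa$ \emph{independent} of the threshold. Write $W := (-\infty,\eta_0]$.

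Next I would use the size restriction on $U$ to trap the value-at-risk inside $W$. Since $0 \in V$ and $\max_{x\in U}\|x\| \leq \epsilon/d$, the estimate derived immediately before the Proposition gives, for every $x \in U$,
\begin{align*}
Q_{\alpha VaR}(x) \;\leq\; d\,\bar{\eta} + d\max_{x\in U}\|x\| \;\leq\; d\,\bar{\eta} + d\cdot\frac{\epsilon}{d} \;=\; \eta_0,
\end{align*}
hence $Q_{\alpha VaR}(U) \subseteq W$. This is precisely the inclusion $Q_{\alpha VaR}(V)\subseteq W$ demanded in the discussion after Lemma \ref{lemma2}, now verified on the smaller convex set $U$.

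Finally I would replay that discussion verbatim with $U$ in place of $V$: for $x_1,x_2 \in U$ and $\lambda\in(0,1)$ put $\eta_i := Q_{\alpha VaR}(x_i) \in W$, feed the feasible threshold $\lambda\eta_1 + (1-\lambda)\eta_2 \in W$ into the variational representation \eqref{varrep} of $Q_{\alpha CVaR}(\lambda x_1 + (1-\lambda)x_2)$, and apply partial $\kappa$-strong convexity of $Q_{EE}$ on $V\times W$ to the pairs $(x_1,\eta_1),(x_2,\eta_2)\in U\times W$. This produces $Q_{\alpha CVaR}(\lambda x_1 + (1-\lambda)x_2) \leq \lambda Q_{\alpha CVaR}(x_1) + (1-\lambda)Q_{\alpha CVaR}(x_2) - \frac{\kappa}{\alpha}\lambda(1-\lambda)\|x_1-x_2\|^2$, which is $\frac{\kappa}{\alpha}$-strong convexity of $Q_{\alpha CVaR}$ on $U$. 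I expect the only real obstacle to be bookkeeping rather than analysis: one must check that $\eta_1$, $\eta_2$ and their convex combination all lie in $W$ and that $\kappa$ is genuinely the $\eta$-uniform modulus of Theorem \ref{theorem2}, since the per-$\eta$ modulus degenerates as $\eta \to \eta_0$ (cf. Example 2) and would otherwise invalidate the step.
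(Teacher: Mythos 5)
Your proof is correct and is essentially the paper's own argument: the paper offers no separate proof, presenting the Proposition as a ``direct consequence of the above considerations,'' namely the value-at-risk bound $Q_{\alpha VaR}(x) \leq d\bar{\eta} + d\max_{x \in U}\|x\| \leq d\bar{\eta} + \epsilon$ on $U$ combined with the variational-representation calculation following Lemma \ref{lemma2}, which is exactly the chain you assemble. Your further observation --- that the literal single-threshold hypothesis must be read, under the standing assumptions A1--A6 of this section, as standing in for the threshold-uniform partial strong convexity of Theorem \ref{theorem2} on $V \times (-\infty, d\bar{\eta}+\epsilon]$ (with the attendant identification of moduli), since Examples 3 and 4 show that per-$\eta$ strong convexity alone cannot yield strong convexity of $Q_{\alpha CVaR}$ --- correctly pinpoints a looseness in the paper's statement rather than a gap in your argument.
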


As a consequence of theorem \ref{theorem3} we get 
\begin{proposition}
Assume A1-A5, \eqref{CVaRCond} and the condition
\begin{align*}
\label{LipschitzCond}
| Q_{\alpha VaR}(y) - Q_{\alpha VaR}(x) | \leq \frac{\delta}{3} \| y-x \|
\end{align*}
for all $x,y \in V$ with $\delta$ as defined in theorem \ref{theorem3}. 
Then $Q_{\alpha \, CVaR}$ is $\frac{\kappa}{\alpha}$-strongly convex on $V$
with $\kappa$ being the modulus of partial strong convexity for $Q_{EE}$ for $\eta \leq \eta_0$.
  
\end{proposition}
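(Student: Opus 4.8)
The plan is to reproduce, almost verbatim, the chain of inequalities that yields Theorem \ref{ThCVaR}, but to invoke the restricted partial strong convexity of Theorem \ref{theorem3} in place of the full partial strong convexity of Theorem \ref{theorem2}. Recall from the variational representation \eqref{varrep} that $\eta_i := Q_{\alpha VaR}(x_i)$ is an optimal threshold for $x_i$, so that $Q_{\alpha CVaR}(x_i) = \eta_i + \frac{1}{\alpha} Q_{EE}(x_i, \eta_i)$. Fixing $x_1, x_2 \in V$ and $\lambda \in (0,1)$ and inserting the suboptimal threshold $\lambda \eta_1 + (1-\lambda)\eta_2$ into the minimisation defining $Q_{\alpha CVaR}(\lambda x_1 + (1-\lambda)x_2)$, the computation that established Theorem \ref{ThCVaR} carries over unchanged, \emph{provided} the restricted strong convexity estimate of Definition \ref{def2} may be applied to the pair $(x_1, \eta_1),(x_2,\eta_2)$. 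Thus the whole proposition reduces to showing that this pair is admissible for Theorem \ref{theorem3}.

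By inspection of the proof of Theorem \ref{theorem3}, the hypothesis $(x,\eta),(x+u,\eta+\nu)\in\Omega$ is used there only to extract the compatibility relation \eqref{nu_condition}, namely $\nu \geq -\frac{\delta}{3}\|u\|$, together with membership in $V_{\eta_0}$. Hence it suffices to verify these two facts for the points $(x_i, Q_{\alpha VaR}(x_i))$. Membership in the cylinder $V_{\eta_0} = V \times (-\infty, \eta_0]$ is immediate: $x_i \in V$ by hypothesis, and $Q_{\alpha VaR}(x_i) \leq \sup_{x \in V} Q_{\alpha VaR}(x) \leq \eta_0$ by condition \eqref{CVaRCond}. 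For the compatibility relation, set $u = x_2 - x_1$ and $\nu = Q_{\alpha VaR}(x_2) - Q_{\alpha VaR}(x_1)$; the assumed Lipschitz estimate with constant $\frac{\delta}{3}$ gives
\[
\nu \;\geq\; -\,\bigl| Q_{\alpha VaR}(x_2) - Q_{\alpha VaR}(x_1) \bigr| \;\geq\; -\frac{\delta}{3}\,\| x_2 - x_1 \|,
\]
which is exactly \eqref{nu_condition}. The symmetry of the absolute value is precisely what controls both orderings of $x_1,x_2$ simultaneously, matching the symmetric role of the two points in the definition of $\Omega$ (where either $x_2-x_1$ or $x_1-x_2$ may fall into a cone $K_i$, $i\in I^+$).

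With admissibility secured, the second inequality in the reproduced chain becomes the restricted strong convexity bound for $Q_{EE}$ from Theorem \ref{theorem3}; carrying the resulting term $-\frac{\kappa}{2}\lambda(1-\lambda)\|x_1-x_2\|^2$ through the $\frac{1}{\alpha}$-scaling and collecting terms via $Q_{\alpha CVaR}(x_i) = \eta_i + \frac{1}{\alpha}Q_{EE}(x_i,\eta_i)$ produces the strong convexity estimate with modulus $\frac{\kappa}{\alpha}$ on $V$. I expect the only genuinely delicate point to be the matching of the hypothesis to $\Omega$: one must reconcile the formal definition of $\Omega$ with the way it is actually used in the proof of Theorem \ref{theorem3} (solely through \eqref{nu_condition}), check that the constant $\frac{\delta}{3}$ in the assumed bound is exactly the one entering $\Omega$ via $\delta = \max\{\alpha_i', \gamma_j'\}$, and confirm that the cone restriction $y-x\in K_i$, $i\in I^+$, is harmless here since the Lipschitz estimate is assumed for \emph{all} $x,y\in V$ and therefore in particular whenever $x_2-x_1$ lies in such a cone.
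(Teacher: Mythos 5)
Your proposal is correct and follows exactly the route the paper intends: it repeats the chain of inequalities from the discussion after Definition \ref{def1} (used for Theorem \ref{ThCVaR}) and reduces everything to checking that the pairs $(x_i, Q_{\alpha VaR}(x_i))$ are admissible for Theorem \ref{theorem3}, which is precisely what condition \eqref{CVaRCond} (membership in $V_{\eta_0}$) and the assumed $\frac{\delta}{3}$-Lipschitz bound (yielding \eqref{nu_condition} in both orientations, as you note via the absolute value) accomplish. Your observation that the literal definition of $\Omega$ quantifies over all $(y,\eta') \in V_{\eta_0}$ while the proof of Theorem \ref{theorem3} only ever uses \eqref{nu_condition} for the two points at hand is a fair and correctly resolved reading of the paper.
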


\subsection{Coherent risk measures and spectral risk measures}
With verifiable conditions for strong convexity of CVaR-based models at hand,
we shall now consider risk measures that can be represented as mixtures
of CVaRs, so called coherent risk measures. For a general discussion of such 
functionals we refer to \cite{ADEH99} and \cite{FS11}.
\begin{definition}
Let $\mathcal{Z} = L_1(\Omega, \mathcal{F}, \mathbb{P})$. A proper function
$\rho: \mathcal{Z} \rightarrow \overline{\mathbb{R}}$ is called a coherent risk 
measure if it satisfies the following four properties:
\begin{itemize}
\item[(1)] $\rho(tZ + (1-t)Z) \leq t\rho(Z) + (1-t)\rho(Z)$ for all 
$Z \in \mathcal{Z}, \, 0 \leq t \leq 1$. (Convexity)
\item[(2)] $\rho(Z) \leq \rho(Z')$ for $Z, Z' \in \mathcal{Z}$ such that 
$Z \leq Z'$ holds $\mathbb{P}$-almost surely. (Monotonicity)
\item[(3)] $\rho(a+Z) = a + \rho(Z)$ for all $Z \in \mathcal{Z}, \, a \in \mathbb{R}$.
(Translation equivariance)
\item[(4)] $\rho(t\,Z) = t \rho(Z)$ for all $Z \in \mathcal{Z}, \, t>0$. (Positive homogeneity)
\end{itemize}
\end{definition}

\begin{theorem}[Kusuoka \cite{Kusuoka2001}]
Assume $(\Omega,\mathcal{F}, \mathbb{P})$ is nonatomic and 
$\rho$ is a law-invariant, coherent risk measure on $\mathcal{Z}$. 
Then we have for any $Z \in \mathcal{Z}$
\begin{equation}\label{Kusuoka}
\rho(Z) = \underset{\nu \in \mathfrak{M}}{\sup} \int CVaR_{\alpha}(Z) \, 
\nu(\text{d}\alpha)
\end{equation}
where $\mathfrak{M}$ is a set of probability measures on the interval
$[0,1)$.

\end{theorem}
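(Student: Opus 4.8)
The plan is to derive \eqref{Kusuoka} from the general dual theory of coherent risk measures, using nonatomicity and law invariance to reduce the dual variables to their distributions, and then identifying the resulting spectral functionals with mixtures of $CVaR_\alpha$.

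First I would invoke the fundamental dual representation. A finite-valued coherent risk measure $\rho$ on $\mathcal{Z} = L_1(\Omega,\mathcal{F},\mathbb{P})$ enjoying the Fatou property admits the robust representation
\[
\rho(Z) = \sup_{\zeta \in \mathcal{A}} \mathbb{E}[\zeta Z],
\]
where $\mathcal{A} \subset L_\infty$ is a convex, weak$^*$-closed set of densities, i.e. $\zeta \geq 0$ with $\mathbb{E}[\zeta] = 1$. This is the Fenchel--Moreau step (see \cite{ADEH99,FS11}): properties (1)--(3) make $\rho$ a lower semicontinuous convex functional whose conjugate is finite only on densities, and positive homogeneity (4) forces the conjugate to be the indicator of $\mathcal{A}$, so that $\rho$ is exactly its support function.

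Next I would exploit law invariance to symmetrise $\mathcal{A}$. Since $\rho(Z)$ depends only on the distribution of $Z$ and $(\Omega,\mathcal{F},\mathbb{P})$ is nonatomic, for every measure-preserving transformation $\sigma$ one has $\zeta \in \mathcal{A} \Rightarrow \zeta \circ \sigma \in \overline{\mathrm{conv}}\,\mathcal{A}$; hence the supremum depends on each $\zeta$ only through its law. I would then apply the Hardy--Littlewood rearrangement inequality
\[
\mathbb{E}[\zeta Z] \leq \int_0^1 F_\zeta^{-1}(u)\, F_Z^{-1}(u)\, \mathrm{d}u,
\]
with equality attained by the comonotone coupling, which nonatomicity guarantees to be realisable inside $\mathcal{A}$. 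This collapses $\rho$ to a supremum of spectral functionals $\int_0^1 F_Z^{-1}(u)\,\phi(u)\,\mathrm{d}u$ over admissible spectra $\phi = F_\zeta^{-1}$, which are nonnegative, nondecreasing on $(0,1)$, and satisfy $\int_0^1 \phi = 1$. Finally I would decompose each such spectrum into elementary ones: writing $\phi_\alpha(u) = \tfrac{1}{\alpha}\mathbf{1}_{[1-\alpha,1)}(u)$, which is the spectrum of $CVaR_\alpha$ via $CVaR_\alpha(Z) = \tfrac{1}{\alpha}\int_{1-\alpha}^1 F_Z^{-1}(u)\,\mathrm{d}u$, a layer-cake inversion (setting $\nu(\mathrm{d}v) = -v\,\mathrm{d}\phi(1-v)$) represents any admissible $\phi$ as $\phi = \int_{[0,1)} \phi_\alpha\,\nu(\mathrm{d}\alpha)$ with $\nu$ a probability measure on $[0,1)$. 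Substituting and interchanging integrals turns each spectral functional into $\int CVaR_\alpha(Z)\,\nu(\mathrm{d}\alpha)$, and taking the supremum over the resulting family $\mathfrak{M}$ of such $\nu$ yields \eqref{Kusuoka}.

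The hard part will be the symmetrisation together with the rearrangement step: one must argue carefully that nonatomicity lets every prescribed distribution be realised by measure-preserving maps, so that the comonotone bound is actually attained within $\overline{\mathrm{conv}}\,\mathcal{A}$ and the supremum over $\mathcal{A}$ genuinely reduces to a supremum over laws without discarding extremal elements. The remaining $L_1/L_\infty$ technicalities — integrability of the densities, attainment of the suprema, and the behaviour of the spectra as $\alpha \to 0+$ that dictates the use of the half-open interval $[0,1)$ — also rely essentially on the nonatomicity hypothesis.
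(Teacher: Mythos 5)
The paper itself offers no proof of this statement: it is quoted verbatim as Kusuoka's theorem with the citation \cite{Kusuoka2001}, so your attempt can only be measured against the standard literature argument (cf.\ \cite{FS11}, Section 4.6, and \cite{Shapiro2013}). By that yardstick your outline is the canonical proof and is sound in structure: Fenchel--Moreau duality turns a finite coherent $\rho$ on $L_1$ into the support function of a set $\mathcal{A}$ of densities in $L_\infty$ (note that finiteness of a convex, monotone functional on $L_1$ already implies continuity, so the Fatou property you invoke is automatic rather than an extra hypothesis); law invariance plus nonatomicity make $\rho^*$, hence $\mathcal{A}$, law-invariant, and the Hardy--Littlewood bound collapses the supremum to one over quantile spectra $\phi = F_\zeta^{-1}$ (since one takes a supremum anyway, approximate comonotone rearrangement suffices, so the attainment worry you raise is real but benign); finally the monotone spectra are decomposed as mixtures of CVaR spectra. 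You also correctly identify the symmetrisation step as the technically delicate one.

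There is one concrete defect, at the endpoint of the mixture step. Your inversion $\nu(\mathrm{d}v) = -v\,\mathrm{d}\phi(1-v)$ does not yield a probability measure for every admissible spectrum: integration by parts gives total mass
\begin{equation*}
\int_{(0,1)} (-v)\,\mathrm{d}\phi(1-v) \;=\; 1 - \phi(0+),
\end{equation*}
and indeed $\int \phi_\alpha(u)\,\nu(\mathrm{d}\alpha) = \phi(u) - \phi(0+)$, so whenever $\phi(0+) > 0$ (e.g.\ $\phi \equiv 1$, the expectation itself) the missing mass $\phi(0+)$ must be added as an atom at the parameter value representing the expectation. In the convention you use --- which is the paper's own, $CVaR_\alpha(Z) = \frac{1}{\alpha}\int_{1-\alpha}^1 F_Z^{-1}(u)\,\mathrm{d}u$, so that $\alpha = 1$ is the expectation and $\alpha \to 0+$ the essential supremum --- this atom sits at $\alpha = 1$, and the mixing measures therefore live on $(0,1]$, exactly as in \cite[Theorem 2]{Shapiro2013} and as the paper itself uses after \eqref{Kusuoka2}. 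The half-open interval $[0,1)$ in the statement belongs to Kusuoka's opposite parametrisation $CVaR_\alpha(Z) = \frac{1}{1-\alpha}\int_\alpha^1 F_Z^{-1}(u)\,\mathrm{d}u$, where $\alpha = 0$ is the expectation and $\alpha \to 1-$ is the excluded essential-supremum limit. Your write-up silently mixes the two conventions (CVaR in the paper's parametrisation, interval from Kusuoka's); once you commit to one convention and place the expectation atom accordingly, the layer-cake step closes and the argument is complete.
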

As in the preceding paragraphs we consider as probability space
$(\Omega,\mathcal{F},\mathbb{P}) = (\mathbb{R}^s, \mathcal{B}(\mathbb{R}^s), \mu)$
which clearly is non-atomic due to $\mu$ having a Lebesgue-density.
Now consider the random variables $Z_x(z) = \varphi(z-x)$ and the induced functional
\begin{align*}
Q_\rho: \mathbb{R}^s \supset V \rightarrow \mathbb{R}, \ \ Q(x) = \rho \, (Z_x).
\end{align*}
This gives the induced Kusuoka representation
\begin{align}\label{Kusuoka2}
Q_\rho(x) = \underset{\nu \in \mathfrak{M}}{\sup} \int Q_{\alpha\,CVaR}(x) \, 
\nu(\text{d}\alpha).
\end{align}
Formula \eqref{Kusuoka2} makes theorem \ref{ThCVaR} applicable to derive sufficient conditions
 for strong convexity of $Q_\rho$ if information on $\mathfrak{M}$ is available.
In the scope of this paper we shall only investigate comonotonic risk measures appearing 
in the Kusuoka representation as
\begin{align*}
Q_{\nu}(x) = \int Q_{\alpha\,CVaR}(x) \, \nu(\text{d}\alpha)
\end{align*}
for some probability measure $\nu$ on $\left( 0,1 \right]$ (cf. \cite[Theorem 2]{Shapiro2013}). In particular, we shall consider measures $\nu_g$ induced by continuous, increasing, concave
distortion functions $g: \left[0,1\right] \rightarrow \left[0,1\right]$ with $g(0) = 0, g(1) = 1$. 
These are defined on half open intervals as
\begin{align}
\label{Distortion1}
\nu_g( \left( 0, t \right] ) = g(t) - t g'(t)
\end{align}
if $t \in \left( 0,1 \right)$ and
\begin{align}
\label{Distortion2}
\nu_g( \{ 1 \} ) = \lim_{t \rightarrow 1-}g'(t)
\end{align}
else (cf. \cite{BK17} and \cite{FS11}, section 4.6). For a comprehensive treatment of distortion functions and risk measures we refer to \cite{BGM09}, \cite{Pflug2006} and \cite{Tsukahara09}. Theorem \ref{ThCVaR} yields criteria for strong
convexity of $Q_{\nu}$ and $Q_{\nu_g}$:
\begin{corollary}[Strong convexity for comonotonic risk measures] \label{CorDistortion}
Assume that\\$Q_{\alpha_0 \, CVaR}$ is strongly convex on some nonempty, open convex set $V$ with modulus of strong convexity $\kappa$ for some $0 < \alpha_0 < 1$ and that the
following inequality is fulfilled:
\begin{equation}
\label{ComonotonicCond}
c = \nu((\alpha_0,1]) > 0.
\end{equation}
Then $Q_\nu$ is strongly convex on $V$ with modulus $\kappa \, c$. If $\nu = \nu_g$ is generated by a distortion function, condition \eqref{ComonotonicCond} is equivalent to
\begin{align}
\label{DistortionCond}
c = 1 - g(\alpha_0) + \alpha_0 g'(\alpha_0) > 0.
\end{align}
\end{corollary}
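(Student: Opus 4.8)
The plan is to reduce the first assertion to a pointwise-in-$\alpha$ strong convexity estimate for $Q_{\alpha CVaR}$ and then integrate that estimate against $\nu$, observing that only the mass $c=\nu((\alpha_0,1])$ contributes curvature. First I would record the consequence of the hypothesis: since $\alpha\mapsto Q_{\alpha VaR}(x)$ is nonincreasing, condition \eqref{CVaRCond} underlying Theorem \ref{ThCVaR} propagates from $\alpha_0$ to every $\alpha\in[\alpha_0,1]$, so (by the remark following Theorem \ref{ThCVaR}) $Q_{\alpha CVaR}$ is strongly convex for all such $\alpha$. The variational representation \eqref{varrep} together with the calculation following Lemma \ref{lemma2} makes the modulus explicit: $Q_{\alpha CVaR}$ is $\tfrac{\kappa_{EE}}{\alpha}$-strongly convex, where $\kappa_{EE}$ denotes the modulus of partial strong convexity of $Q_{EE}$ for $\eta\le\eta_0$. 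For $\alpha\in(0,\alpha_0]$ I would claim only convexity of $Q_{\alpha CVaR}$, which is all that \eqref{varrep} guarantees and all the argument requires.

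The one delicate point — and the main obstacle — is to fix a single modulus $\kappa$ valid simultaneously for all $\alpha\in(\alpha_0,1]$, since the per-level modulus $\tfrac{\kappa_{EE}}{\alpha}$ varies with $\alpha$. On this range it is smallest at $\alpha=1$, so the correct uniform choice is $\kappa=\kappa_{EE}=\min_{\alpha\in[\alpha_0,1]}\tfrac{\kappa_{EE}}{\alpha}$; with this common $\kappa$, every $Q_{\alpha CVaR}$ with $\alpha\in(\alpha_0,1]$ is $\kappa$-strongly convex, and $Q_{\alpha_0 CVaR}$ is a fortiori $\kappa$-strongly convex (indeed with the larger modulus $\tfrac{\kappa_{EE}}{\alpha_0}$). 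It is this uniform modulus, valid across the whole interval, that must be read as the $\kappa$ appearing in the statement; getting the monotonicity direction right here is exactly what prevents one from overstating the curvature one can extract.

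With a uniform $\kappa$ in hand the first assertion follows by integration. Splitting $Q_\nu(x)=\int_{(0,\alpha_0]}Q_{\alpha CVaR}(x)\,\text{d}\nu(\alpha)+\int_{(\alpha_0,1]}Q_{\alpha CVaR}(x)\,\text{d}\nu(\alpha)$ and integrating the defining inequality
\begin{align*}
Q_{\alpha CVaR}(\lambda x_1+(1-\lambda)x_2)\le \lambda Q_{\alpha CVaR}(x_1)+(1-\lambda)Q_{\alpha CVaR}(x_2)-\tfrac{\kappa}{2}\lambda(1-\lambda)\|x_1-x_2\|^2
\end{align*}
over $\alpha\in(\alpha_0,1]$, together with the same inequality for $\kappa=0$ (plain convexity) over $\alpha\in(0,\alpha_0]$, yields the curvature term $\tfrac{\kappa}{2}\nu((\alpha_0,1])\,\lambda(1-\lambda)\|x_1-x_2\|^2$. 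Since $\nu((\alpha_0,1])=c$ by \eqref{ComonotonicCond}, this shows $Q_\nu$ is $\kappa c$-strongly convex on $V$; all integrals are finite because each $Q_{\alpha CVaR}$ is real-valued and $\nu$ is a probability measure.

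For the distortion case the equivalence is a direct computation. I would first note that $\nu_g$ has total mass one: combining \eqref{Distortion1} and \eqref{Distortion2} gives $\nu_g((0,1))=\lim_{t\to1-}(g(t)-tg'(t))=1-\lim_{t\to1-}g'(t)$ and $\nu_g(\{1\})=\lim_{t\to1-}g'(t)$, whence $\nu_g((0,1])=1$. Therefore $c=\nu_g((\alpha_0,1])=1-\nu_g((0,\alpha_0])$, and substituting $\nu_g((0,\alpha_0])=g(\alpha_0)-\alpha_0 g'(\alpha_0)$ from \eqref{Distortion1} with $t=\alpha_0$ gives $c=1-g(\alpha_0)+\alpha_0 g'(\alpha_0)$, which is precisely \eqref{DistortionCond}; the equivalence $c>0\Leftrightarrow\eqref{DistortionCond}$ is then immediate. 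The only thing to check here is the harmless verification that $\nu_g$ is a probability measure and that \eqref{Distortion1} indeed records its value on $(0,\alpha_0]$, so this part presents no real difficulty.
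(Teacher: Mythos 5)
Your proof is correct and, in its load-bearing steps, coincides with the paper's: the paper likewise splits $Q_\nu$ into the integrals over $(0,\alpha_0]$ and $(\alpha_0,1]$, applies plain convexity of $Q_{\alpha\,CVaR}$ on the first piece and $\kappa$-strong convexity on the second, and collects the curvature term $\tfrac{\kappa}{2}\lambda(1-\lambda)\|x-y\|^2\,\nu((\alpha_0,1])$; the distortion part is the identical computation $\nu_g((\alpha_0,1])=\nu_g((0,1])-\nu_g((0,\alpha_0])=1-g(\alpha_0)+\alpha_0 g'(\alpha_0)$. Where you genuinely differ is at the point you yourself flag as delicate: the paper's proof simply asserts strong convexity of the integrand with the \emph{same} modulus $\kappa$ for every $\alpha\in(\alpha_0,1]$, leaning implicitly on the remark after Theorem \ref{ThCVaR} — which, once \eqref{CVaRCond} holds at $\alpha_0$, guarantees strong convexity for all $\alpha\geq\alpha_0$, but only with the level-dependent modulus $\kappa_{EE}/\alpha$ that shrinks toward $\kappa_{EE}$ as $\alpha\to 1$. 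You make the uniformization explicit by taking $\kappa=\kappa_{EE}=\min_{\alpha\in[\alpha_0,1]}\kappa_{EE}/\alpha$, and this is the honest reading: the abstract hypothesis that $Q_{\alpha_0\,CVaR}$ alone is $\kappa$-strongly convex does not by itself propagate to larger $\alpha$ with unchanged modulus (the band decomposition $\alpha\, Q_{\alpha\,CVaR}=\alpha_0\, Q_{\alpha_0\,CVaR}+\int_{\alpha_0}^{\alpha}Q_{t\,VaR}\,\mathrm{d}t$ adds a generally nonconvex term), so your version trades the formally larger per-level constant $\kappa_{EE}/\alpha_0$ at $\alpha_0$ for a constant that is actually valid across the whole interval of integration. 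Your additional verification that $\nu_g$ has total mass one — $\nu_g((0,1))=1-\lim_{t\to1-}g'(t)$ together with the atom \eqref{Distortion2} at $1$ — is used silently by the paper when it writes $\nu_g((0,1])=1$, and your appeal to \eqref{varrep} for plain convexity on $(0,\alpha_0]$ is exactly the paper's. In short: same decomposition and same computations, with a correct and worthwhile repair of the one step the paper glosses over.
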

\begin{proof}
Let $x,y \in V$ and $0 < \lambda < 1$. By splitting the integral into two 
and using convexity resp. strong convexity of the integrands we get
\begin{align*}
&Q_{\nu}(\lambda x + (1-\lambda)y ) = \int_{\left(0,1\right]} Q_{\alpha CVaR}(\lambda x + (1-\lambda) y) \, 
\nu(\text{d}\alpha) \\
&= \int_{\left( 0,\alpha_0 \right]} Q_{\alpha CVaR}(\lambda x + (1-\lambda) y) \, 
\nu(\text{d}\alpha)
+ \int_{\left( \alpha_0,1 \right]} Q_{\alpha CVaR}(\lambda x + (1-\lambda) y) \, 
\nu(\text{d}\alpha) \\
&\leq \lambda \int_{\left( 0,\alpha_0 \right]} Q_{\alpha CVaR}(x) \, \nu(\text{d}\alpha) 
+ (1-\lambda) \int_{\left( 0,\alpha_0 \right]} Q_{\alpha CVaR}(y) \, \nu(\text{d}\alpha) \\
&+ \lambda \int_{\left( \alpha_0,1 \right]} Q_{\alpha CVaR}(x) \, \nu(\text{d}\alpha) 
+ (1-\lambda) \int_{\left( \alpha_0,1 \right]} Q_{\alpha CVaR}(y) \, \nu(\text{d}\alpha) \\ &- \frac{\kappa}{2} \lambda (1-\lambda) \|x-y\|^2 \nu( \left(\alpha_0,1 \right]) \\
&= \lambda \int_{\left( 0,1 \right]} Q_{\alpha CVaR}(x) \, \nu(\text{d}\alpha)
+ (1-\lambda) \int_{\left( 0,1 \right]} Q_{\alpha CVaR}(y) \, \nu(\text{d}\alpha) \\
&- \frac{\kappa}{2} \lambda (1-\lambda) \|x-y\|^2 \nu( \left(\alpha_0,1 \right]) \\
&\geq \lambda Q_{\nu}(x) + (1-\lambda) Q_{\nu}(y) - \frac{\kappa \, c}{2} \lambda (1-\lambda) \|x-y\|^2.
\end{align*}
For distortion risk measures we have
\begin{align*}
\nu_g( \left( \alpha_0,1 \right] ) &= \nu_g( \left( 0,1 \right] \backslash 
\left(0,\alpha_0 \right] ) = \nu_g( \left( 0,1 \right]) - \nu_g(\left(0,\alpha_0 \right]) \\
&= 1 - ( g(\alpha_0) - \alpha_0 \, g'(\alpha_0) )   
=  1 - g(\alpha_0) + \alpha_0 \, g'(\alpha_0) = c > 0, 
\end{align*}
which completes the proof.
\qed
\end{proof}

To illustrate Corollary \ref{CorDistortion}, we shall discuss condition \eqref{DistortionCond} for various distortion functions. \\

The expectation is generated by the distortion function $g_\mathbb{E}(t) := t$. By $1 - g(t) + tg'(t) = 1$ for all $t$, condition \eqref{DistortionCond} is fulfilled with $c=1$. However, the assumption of strong convexity of $Q_{\alpha_0 CVaR}$ for some $\alpha_0 \in (0,1)$ is generally more restrictive than the assumptions of Theorem \ref{TheoremSCExp}.\\

The distortion function associated with the conditional value at risk $Q_{\alpha \, CVaR}$ is defined by  $g_{\alpha \, CVaR}(t) := \min \lbrace \frac{t}{\alpha}, 1 \rbrace$. We have
$$
1 - g_{\alpha \, CVaR}(t) + t g_{\alpha \, CVaR}'(t) = \bigg\{ \begin{matrix}
1, \; \; \text{for} \; t \leq \alpha \\
0, \; \; \text{for} \; t > \alpha
\end{matrix}.
$$
Thus, \eqref{DistortionCond} does not constitute an additional assumption for the conditional value at risk. \\

The Wang Transform distortion is given by $g_{\beta \, WT}(t) := \Phi(\Phi^{-1}(x) - \beta)$, where $\beta > 0$ is a parameter and $\Phi$ denotes the cdf. of the standard normal distribution (cf. \cite{Wang2000}).
For arbitrary $t \in (0,1)$, we calculate
$$
1 - g_{\beta \, WT}(t) + t g_{\beta \, WT}'(t) = 1 - \underbrace{g_{\beta \, WT}(t)}_{\leq 1} + t \underbrace{\exp(- \beta t - \frac{1}{2} \beta^2)}_{> 0} > 0.
$$
Consequently, condition \eqref{DistortionCond} is always fulfilled for the Wang Transform. \\

The mappings $g_{\gamma \, PH}(t) := t^\gamma$ with $\gamma \in (0,1]$ form the parametrized familiy of Proportional Hazard distortion functions. For any feasible $\gamma$ and any $t \in (0,1)$, we have
$$
1 - g_{\gamma \, PH}(t) + t g_{\gamma \, PH}'(t) = 1 - (1 - \gamma)t^\gamma > 0,
$$
which means that condition \eqref{DistortionCond} holds for any Proportional Hazard distortion function. \\

The Lookback distortion is given by $g_{\gamma \, LB}(t) := t^\gamma (1 - \gamma \ln(t))$, where $\gamma \in (0,1]$ is a parameter. For any $t \in (0,1)$, we calculate
$$
1 - g_{\gamma \, LB}(t) + t g_{\gamma \, LB}'(t) = 1 - t^\gamma (1 + \gamma \ln(t) (1 - \gamma)) > 0.
$$
Thus, condition \eqref{DistortionCond} is fulfilled.

\section{Stability}

While we have only considered $Q_{\alpha \, CVaR}$ and $Q_{\nu_g}$ as functions of the first-stage decision variable $x$ so far, these quantities also depend on the underlying probability measure $\mu$. In stochastic programming, incomplete information about the true underlying distribution or the need for computational efficiency may lead to optimisation models that employ an approximation of $\mu$. Stability analysis deals with the behaviour of optimal values and optimal solution sets of the perturbed models in comparison to the original one.

\medskip   

First, we shall recall some relevant results concerning stability and strong convexity of abstract parametric programs of the form
\begin{equation}
\leqnomode
\label{Pt}
\tag*{$\textbf{P(t)}$}
\min_x \lbrace g(x,t) \; | \; x \in X(t) \rbrace
\end{equation}
where $g: \mathbb{R}^n \times T \to \mathbb{R}$ and $X: \mathcal{T} \rightrightarrows \mathbb{R}^n$ are functions and $t$ varies in some metric space $(\mathcal{T},\mathrm{d}_\mathcal{T})$. With inequality constraints and differentiable data, stability analysis for \eqref{Pt} goes back to Alt \cite{Alt1983}, while a more general setting is considered by Klatte in \cite{Klatte1987}. For constant feasible set $X(t) \equiv X \subseteq \mathbb{R}^n$ for all $t \in \mathcal{T}$, a proof of the following result is also given in \cite{RoemischSchultz1989}.

\begin{lemma} \label{StabilityKlatte}
Let $X$ be some nonempty, closed, convex subset of $\mathbb{R}^n$ and consider the mapping $\Psi: T \rightrightarrows \mathbb{R}^n$ given by
$$
\Psi(t) := \mathrm{Argmin}_x \lbrace g(x,t) \; | \; x \in X \rbrace.
$$
Suppose that $t_0 \in \mathcal{T}$ is such that the following conditions are satisfied:
\begin{enumerate}
\item[S1] $g(\cdot,t)$ is convex for all $t$ in a neighborhood of $t_0$.
\item[S2] There exists a bounded open set $Q \subset \mathbb{R}^n$ such that $\Phi(t_0) \subseteq Q$.
\item[S3] There exist $x_0 \in \Psi(t_0)$ and $\alpha: \mathbb{R}^n \to [0,\infty)$ such that $\alpha(0) = 0$ and
\begin{equation}
\label{LocalGrowth}
g(x,t_0) \geq g(x_0,t_0) + \alpha(x-x_0) \; \forall x \in X \cap \mathrm{cl} \; Q.
\end{equation}
\item[S4] There exists a constant $L > 0$ such that
\begin{equation}
\label{LipschitzCond}
|g(x,t) - g(x,t_0)| \leq L \mathrm{d}_T(t,t_0)
\end{equation}
holds for all $x \in \mathrm{cl} \; Q$ and all $t$ in a neighborhood of $t_0$.
\end{enumerate}
Then there exists a neighborhood of $t_0$ on which we have $\Psi(t) \neq \emptyset$ and
\begin{equation}
\label{KlatteEstimate}
\sup_{x \in \Psi(t)} \alpha(x-x_0) \leq 2 L \mathrm{d}_\mathcal{T}(t,t_0).
\end{equation}
\end{lemma}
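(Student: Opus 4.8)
The plan is to follow the classical localisation argument for convex parametric programs: reduce the problem over all of $X$ to one over the compact set $X \cap \mathrm{cl}\,Q$, and then exploit convexity to transfer the conclusion back to the global problem. To begin, since $g(\cdot,t)$ is convex by S1 and real-valued on $\mathbb{R}^n$ it is continuous, so it attains its minimum on the compact set $X \cap \mathrm{cl}\,Q$; let $\hat{x}_t$ denote a minimiser of the \emph{restricted} problem $\min\{g(x,t)\,:\,x \in X \cap \mathrm{cl}\,Q\}$, which therefore exists for every $t$.

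The crux is to show that $\hat{x}_t$ lies in the open set $\mathrm{int}\,Q$ for all $t$ in a neighbourhood of $t_0$, so that it is in fact a global minimiser over $X$. I would argue by contradiction and compactness: if some sequence $t_n \to t_0$ had $\hat{x}_{t_n} \in \partial Q$, compactness of $\partial Q$ would yield a subsequence with $\hat{x}_{t_n} \to \bar{x} \in \partial Q$, and closedness of $X$ would give $\bar{x} \in X$. Combining the restricted optimality $g(\hat{x}_{t_n},t_n) \le g(x_0,t_n)$ with the Lipschitz bound S4 (to pass $t_n \to t_0$) and continuity of $g(\cdot,t_0)$ (to pass $\hat{x}_{t_n} \to \bar{x}$) forces $g(\bar{x},t_0) \le g(x_0,t_0)$, hence $\bar{x} \in \Psi(t_0)$, contradicting $\Psi(t_0) \subseteq Q$ with $Q$ open. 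Once $\hat{x}_t \in \mathrm{int}\,Q$, convexity of both $g(\cdot,t)$ and $X$ upgrades it to a global minimiser: for any $y \in X$ the segment $\hat{x}_t + \lambda(y-\hat{x}_t)$ stays in $X \cap \mathrm{cl}\,Q$ for small $\lambda$, so the restricted optimality together with convexity gives $g(\hat{x}_t,t) \le g(y,t)$. In particular $\Psi(t) \neq \emptyset$.

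Next I would confirm $\Psi(t) \subseteq Q$, so that the growth condition S3 applies to \emph{all} of $\Psi(t)$ and not merely to the restricted minimiser. As $\Psi(t)$ is convex and contains the interior point $\hat{x}_t$, any global minimiser outside $\mathrm{cl}\,Q$ would make the connecting segment meet $\partial Q$ in a further global minimiser; repeating the compactness argument of the previous step, now applied to global minimisers lying on $\partial Q$, rules this out for $t$ near $t_0$. Intersecting the finitely many neighbourhoods produced so far yields a single neighbourhood of $t_0$ on which $\Psi(t) \neq \emptyset$ and $\Psi(t) \subseteq Q$.

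On this neighbourhood the estimate is a short chain. For any $x \in \Psi(t) \subseteq X \cap \mathrm{cl}\,Q$, two applications of S4 together with the optimality $g(x,t) \le g(x_0,t)$ give $g(x,t_0) \le g(x_0,t_0) + 2L\,\mathrm{d}_\mathcal{T}(t,t_0)$, whereupon S3 reads $\alpha(x-x_0) \le g(x,t_0) - g(x_0,t_0) \le 2L\,\mathrm{d}_\mathcal{T}(t,t_0)$, and taking the supremum over $x \in \Psi(t)$ produces \eqref{KlatteEstimate}. I expect the main obstacle to be the second step: establishing that the perturbed solutions do not escape the open set $Q$, which is precisely the upper semicontinuity of the solution map and is where openness of $Q$ in S2 and the Lipschitz estimate S4 are indispensable; the closing computation is then routine.
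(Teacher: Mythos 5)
Your proof is correct and is essentially the classical localization argument of Klatte and R\"omisch--Schultz that the paper itself cites for this lemma rather than proving it: restrict to the compact set $X \cap \mathrm{cl}\,Q$, rule out minimizers on $\partial Q$ for $t$ near $t_0$ by a compactness/contradiction argument using S4 and openness of $Q$, upgrade interior restricted minimizers to global ones via convexity of $g(\cdot,t)$ and $X$, and close with the two-sided Lipschitz estimate producing the factor $2L$ in \eqref{KlatteEstimate}. All steps (including the convexity-of-$\Psi(t)$ segment argument ensuring $\Psi(t) \subseteq Q$, so that S3 applies to every element of $\Psi(t)$) are sound, so there is nothing to correct.
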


In the presence of strong convexity, assumptions S2 - S4 above can be weakened.

\begin{lemma} \label{LemmaStronglyConvexStability}
Let $X \subseteq \mathbb{R}^n$ be nonempty, closed and convex and suppose that $t_0 \in \mathcal{T}$ is such that S1 and the following conditions are satisfied:
\begin{enumerate}
\item[C1] $g(\cdot,t_0)$ is $\kappa$-strongly convex on some open convex set $V$ with $\Psi(t_0) \cap V \neq \emptyset$.
\item[C2] There is a constant $L > 0$ such that \eqref{LipschitzCond} holds for all $t$ in a neighborhood of $t_0$ and all $x$ in a neighborhood of $\Psi(t_0)$.
\end{enumerate}
Then there exists a neighborhood of $t_0$ on which we have $\Psi(t) \neq \emptyset$ and 
$$
\mathrm{d}_H(\Psi(t_0), \Psi(t)) \leq 2 \sqrt{\frac{L}{\kappa} \mathrm{d}_\mathcal{T}(t,t_0)}.
$$
\end{lemma}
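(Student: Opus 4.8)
The plan is to deduce the result from Lemma \ref{StabilityKlatte}, using the strong convexity in C1 both to localize the argmin map (via uniqueness) and to manufacture the growth condition S3. First I would record that strong convexity forces $\Psi(t_0)$ to be a single point. By C1 there is some $x_0 \in \Psi(t_0) \cap V$. If some $x_1 \in \Psi(t_0)$ differed from $x_0$, then by convexity of $g(\cdot,t_0)$ (from S1) the entire segment $[x_0,x_1]$ would consist of global minimizers; since $V$ is open and $x_0 \in V$, a short nondegenerate piece of this segment emanating from $x_0$ lies in $V$, and applying the defining inequality of $\kappa$-strong convexity to the endpoints of that piece (both minimizers) at the midpoint produces the strict drop $g(\mathrm{mid},t_0) < g(x_0,t_0)$, contradicting minimality. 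Hence $\Psi(t_0) = \{x_0\}$.

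Next I would extract a quadratic growth estimate. Inserting $y = x_0$ into the strong-convexity inequality for $g(\cdot,t_0)$, using that $x_0$ is a minimizer so that $g(\lambda x + (1-\lambda)x_0, t_0) \geq g(x_0,t_0)$, then dividing by $\lambda$ and letting $\lambda \to 0+$, yields
\[
g(x,t_0) \;\geq\; g(x_0,t_0) + \tfrac{\kappa}{2}\,\|x - x_0\|^2 \qquad \text{for all } x \in V .
\]
This is exactly a growth condition of the form required in S3, with $\alpha(z) := \tfrac{\kappa}{2}\|z\|^2$ satisfying $\alpha(0)=0$. Then I would fix the bounded set $Q$ demanded by Lemma \ref{StabilityKlatte}. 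Since $x_0$ lies in the open set $V$ and since C2 provides a neighborhood $N$ of $\Psi(t_0)=\{x_0\}$ on which the Lipschitz bound of S4 holds, I would take $Q = B_\epsilon(x_0)$ with $\epsilon$ small enough that $\mathrm{cl}\,Q \subseteq V \cap N$. With this choice S2 holds because $\Psi(t_0)=\{x_0\}\subseteq Q$; S3 holds on $X \cap \mathrm{cl}\,Q \subseteq V$ with the $\alpha$ above; and S4 holds on $\mathrm{cl}\,Q \subseteq N$.

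Applying Lemma \ref{StabilityKlatte} then gives a neighborhood of $t_0$ on which $\Psi(t)\neq\emptyset$ and
\[
\sup_{x \in \Psi(t)} \tfrac{\kappa}{2}\,\|x - x_0\|^2 \;\leq\; 2L\,\mathrm{d}_\mathcal{T}(t,t_0).
\]
Solving for the norm yields $\sup_{x\in\Psi(t)}\|x-x_0\| \leq 2\sqrt{(L/\kappa)\,\mathrm{d}_\mathcal{T}(t,t_0)}$. Because $\Psi(t_0)=\{x_0\}$ is a singleton, this one-sided distance is precisely $\sup_{x\in\Psi(t)}\mathrm{dist}(x,\Psi(t_0))$, and it dominates $\mathrm{dist}(x_0,\Psi(t))$, so it equals the full Hausdorff distance $\mathrm{d}_H(\Psi(t_0),\Psi(t))$, giving the claimed bound.

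The step I expect to require the most care is reconciling the two distinct localizations appearing in C1 and C2 against the single bounded set $Q$ that Lemma \ref{StabilityKlatte} needs: the growth estimate is only available on $V$, while the Lipschitz bound is only guaranteed near $\Psi(t_0)$. The singleton property proved in the first step is exactly what makes this reconciliation possible, since it allows $Q$ to be shrunk to a ball around $x_0$ contained in both $V$ and $N$. Without strong convexity, and hence without uniqueness, one could not ensure that $\Psi(t_0)$ fits inside such a small $Q$, and this is precisely where the weaker hypotheses C1--C2 buy the stronger, Hölder-type conclusion over the baseline S2--S4.
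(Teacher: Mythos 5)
Your proof is correct and follows essentially the same route as the paper: reduce to Lemma \ref{StabilityKlatte} by showing $\Psi(t_0)$ is a singleton $\lbrace x_0 \rbrace$, taking $Q$ to be a small ball around $x_0$ whose closure lies in $V$ and in the neighborhood from C2 (giving S2 and S4), and supplying S3 with $\alpha(z) = \tfrac{\kappa}{2}\|z\|^2$ before converting the resulting estimate into the Hausdorff bound exactly as the paper does. The only deviation is internal and harmless: where the paper derives the quadratic growth from the subgradient inequality of \cite[Proposition 4.2]{GoebelRockafellar2008} combined with first-order optimality, you obtain it elementarily from the definition of strong convexity and minimality of $x_0$ (strictly speaking this justifies the growth only for $x \in X \cap V$ rather than all of $V$, but that is all that S3 requires).
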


\begin{proof}
Conditions S1 and C1 imply that $\Psi(t_0)$ is a singleton $\lbrace x_0 \rbrace \subset V$. By C2 and the openess of $V$, there thus exist constants $L > 0$ and $r > 0$ such that \eqref{LipschitzCond} holds for all $t$ in a neighborhood of $t_0$ and all $x \in \mathrm{cl} \; B_r(x_0) \subset V$. In particular, setting $Q := B_r(x_0)$, assumptions S2 and S4 of Lemma \ref{StabilityKlatte} are fulfilled. By C1, $g_{t_0}(\cdot) := g(\cdot,t_0)$ is $\kappa$-strongly convex on $V$ and \cite[Proposition 4.2]{GoebelRockafellar2008} yields
\begin{equation}
\label{GoebelRockafellar}
g_{t_0}(x) \geq g_{t_0}(x_0) + d^\top (x-x_0) + \frac{1}{2} \kappa \| x-x_0 \|^2
\end{equation}
for all $d \in \partial g_{t_0}(x_0)$ and all $x \in V$. As $x_0$ minimizes $g(\cdot,t_0)$ over $X$, there is a subgradient $d_0 \in \partial g_{t_0}(x_0)$ such that $d_0^\top (x - x_0)$ is nonnegative for all $x \in X$. Consequently, \eqref{GoebelRockafellar} implies
$$
g_{t_0}(x) \geq g_{t_0}(x_0) + \frac{1}{2} \kappa \| x-x_0 \|^2
$$
for all $x \in X \cap Q$. Choosing $\alpha(\cdot) := \frac{1}{2} \kappa \| \cdot \|$ we obtain S3. Lemma \ref{StabilityKlatte} yields the existence of a neighborhood $T_0$ of $t_0$ on which $\Psi(t) \neq \emptyset$ and \eqref{KlatteEstimate} hold. Thus,  
$$
\mathrm{d}_H(\Psi(t_0), \Psi(t)) = \sup_{x \in \Psi(t)} \| x - x_0 \| \leq 2 \sqrt{\frac{L}{\kappa} \mathrm{d}_\mathcal{T}(t,t_0)}
$$
holds for all $t \in T_0$.
\end{proof}

Returning to stochastic programming models, we shall endow the parameter space of Borel probability measures on $\mathbb{R}^s$ with finite moments of order $p \geq 1$
$$
\mathcal{M}^p_s := \lbrace \mu \in \mathcal{P}(\mathbb{R}^s) \; | \; \int_{\mathbb{R}^s} \|t\|^p~\mu(\text{d}t) < \infty \rbrace
$$
with the $p-$th order Wasserstein distance (cf. \cite{RaRue98}, \cite{Vil03} and \cite{Vil09})
$$
W_p(\mu, \nu) := \inf_{\kappa} \Big\{ \int_{\mathbb{R}^s \times \mathbb{R}^s} \|v-\tilde{v}\|^p~\kappa(\text{d}(v,\tilde{v})) \; | \; \kappa \in \mathcal{P}(\mathbb{R}^s \times \mathbb{R}^s), \kappa \circ \pi_1^{-1} = \mu, \; \pi_2^{-1} = \nu \Big\}^{\frac{1}{p}}.
$$
To make the dependence of $Q_{\alpha \, CVaR}$ on the underlying measure explicit, we shall consider the mapping $Q_{\alpha CVaR}: \mathbb{R}^s \times \mathcal{M}^1_s \to \mathbb{R}$ definded by
$$
Q_{\alpha \, CVaR}(x, \mu) := \min_{\eta \in \mathbb{R}} \; \eta + \frac{1}{\alpha} \int_{\mathbb{R}^s} \max \lbrace 0, \varphi(z-x) - \eta\rbrace~\mu(\text{d}z).
$$
Let $g: \left[0,1\right] \rightarrow \left[0,1\right]$ be some continuous, increasing, concave distortion function with $g(0) = 0, g(1) = 1$ such that the mapping $Q_{\nu_g}: \mathbb{R}^n \times \mathcal{M}^p_s \to \mathbb{R}$,
$$
Q_{\nu_g}(x, \mu) = \int_{0}^{1} Q_{\alpha\,CVaR}(x,\mu) \, \nu(\text{d}\alpha)
$$
with $\nu_g \in \mathcal{P}((0,1])$ given by \eqref{Distortion1} and \eqref{Distortion2} is well defined. We shall consider the parametric optimisation problem
\begin{equation}
\leqnomode
\label{Pmu}
\tag*{$\textbf{P($\mu$)}$}
\min_x \lbrace \hat{h}(x) + Q_{\nu_g}(x,\mu) \; | \; x \in T(X)) \rbrace,
\end{equation}
where $X$ is some subset of $\mathbb{R}^n$, $T: \mathbb{R}^n \to \mathbb{R}^s$ is linear and the mapping $\hat{h}$ is given by \eqref{Hath}. Let $\Psi_g: \mathcal{M}^p_s \rightrightarrows \mathbb{R}^n$,
$$
\Psi_g(\mu) := \mathrm{Argmin}_x \lbrace \hat{h}(x) + Q_{\nu_g}(x,\mu) \; | \; x \in T(X)) \rbrace
$$
denote the optimal solution set mapping of \eqref{Pmu}.

\begin{theorem}[Quantitative Stability of \eqref{Pmu}]
Let $X \subseteq \mathbb{R}^n$ be nonempty, closed and convex and let $\mu_0 \in \mathcal{M}^p_s$ be such that $Q_{\nu_g}(\cdot, \mu_0)$ is $\kappa$-strongly convex on some nonempty, open convex set $V$ satisfying $\Psi_g(\mu_0) \cap V \neq \emptyset$. Furthermore, assume that
$$
\mathcal{L} := \int_{0}^{1} \frac{1}{(1-\alpha)^{\frac{1}{p}}}~\nu_g(\mathrm{d}\alpha) < \infty.
$$
is finite. Then there exists a constant $r > 0$ such that for any $\mu \in \mathcal{M}^p_s$ satisfying $d_p(\mu, \mu_0) \leq r$ we have $\Psi_g(\mu) \neq \emptyset$ and 
$$
\mathrm{d}_H(\Psi_g(\mu_0), \Psi_g(\mu)) \leq 2 \sqrt{\frac{\mathcal{L} \cdot \max_{i \in I} \|d_i \|}{\kappa} \cdot \mathrm{d}_p(\mu_0,\mu)}.
$$
\end{theorem}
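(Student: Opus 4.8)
The plan is to recognize \ref{Pmu} as an instance of the abstract parametric program \ref{Pt} with objective $g(x,\mu) = \hat{h}(x) + Q_{\nu_g}(x,\mu)$, constant feasible set $T(X)$, and parameter $\mu$ ranging over the metric space $(\mathcal{M}^p_s, W_p)$ with reference point $\mu_0$, and then to invoke Lemma \ref{LemmaStronglyConvexStability}. I would first record that $T(X)$ is convex as the linear image of the convex set $X$ (noting that its closedness is needed to apply the lemma with this feasible set). Assumption S1 holds because $\hat{h}$ is convex and, for every fixed $\mu$, the map $x \mapsto \varphi(z-x)$ is convex, so each $Q_{\alpha CVaR}(\cdot,\mu)$ is convex and so is its $\nu_g$-mixture $Q_{\nu_g}(\cdot,\mu)$. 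Condition C1 is immediate from the hypotheses: adding the convex $\hat{h}$ to the $\kappa$-strongly convex $Q_{\nu_g}(\cdot,\mu_0)$ leaves a $\kappa$-strongly convex function on $V$, and $\Psi_g(\mu_0) \cap V \neq \emptyset$ is assumed.

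The real work lies in verifying C2, i.e. in producing a constant $L$ with $|g(x,\mu) - g(x,\mu_0)| \leq L\, W_p(\mu,\mu_0)$ uniformly for $x$ near $\Psi_g(\mu_0)$. Since $\hat{h}$ does not depend on $\mu$, this reduces to bounding $|Q_{\nu_g}(x,\mu) - Q_{\nu_g}(x,\mu_0)|$. I would first pull the difference inside the Kusuoka mixture,
\begin{align*}
|Q_{\nu_g}(x,\mu) - Q_{\nu_g}(x,\mu_0)| \leq \int_0^1 |Q_{\alpha CVaR}(x,\mu) - Q_{\alpha CVaR}(x,\mu_0)|\, \nu_g(\mathrm{d}\alpha),
\end{align*}
and then establish a per-level estimate. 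The key observation is that $z \mapsto \varphi(z-x)$ is Lipschitz with constant $\max_{i\in I}\|d_i\|$, because $\varphi(t) = \max_{i\in I} d_i^\intercal t$ by Lemma \ref{lemma0}(i); consequently the laws of $\varphi(z-x)$ under $\mu$ and under $\mu_0$ are at $p$-Wasserstein distance at most $\max_{i\in I}\|d_i\|\, W_p(\mu,\mu_0)$, since pushing forward along a Lipschitz map contracts $W_p$ by its Lipschitz constant. Combining this with the quantile representation of $Q_{\alpha CVaR}$ and Hölder's inequality on the relevant tail interval yields the level-wise modulus $(1-\alpha)^{-1/p}\max_{i\in I}\|d_i\|$ — precisely the factor appearing in $\mathcal{L}$. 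Crucially this constant is independent of $x$, so integrating against $\nu_g$ gives the uniform bound with $L = \mathcal{L}\cdot\max_{i\in I}\|d_i\|$, which is finite by the standing assumption $\mathcal{L} < \infty$.

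With S1, C1 and C2 in hand, Lemma \ref{LemmaStronglyConvexStability} applies and delivers a neighborhood of $\mu_0$ on which $\Psi_g(\mu) \neq \emptyset$ together with the estimate $\mathrm{d}_H(\Psi_g(\mu_0), \Psi_g(\mu)) \leq 2\sqrt{(L/\kappa)\, \mathrm{d}_p(\mu_0,\mu)}$; substituting $L = \mathcal{L}\max_{i\in I}\|d_i\|$ is exactly the claimed bound, and the radius $r$ is the one furnished by the lemma. I expect the main obstacle to be the per-level Wasserstein--Lipschitz estimate for $Q_{\alpha CVaR}$ and, in particular, making it uniform in $x$: one must argue that the modulus is governed solely by the Lipschitz constant of $\varphi$ and the level $\alpha$, independently of the decision variable, so that the integrated constant $\mathcal{L}\max_{i\in I}\|d_i\|$ controls $g(\cdot,\mu) - g(\cdot,\mu_0)$ on a whole neighborhood of $\Psi_g(\mu_0)$. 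A secondary technical point to confirm is the closedness of $T(X)$, which Lemma \ref{LemmaStronglyConvexStability} requires of the feasible set.
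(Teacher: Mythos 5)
Your proposal is correct and follows essentially the same route as the paper: both reduce the theorem to Lemma \ref{LemmaStronglyConvexStability} applied on the feasible set $T(X)$, with S1 and C1 immediate from the hypotheses and the whole weight carried by the Wasserstein--Lipschitz condition C2 with constant $L = \mathcal{L}\cdot\max_{i\in I}\|d_i\|$. The only difference is that where you sketch the per-level estimate yourself (contraction of $W_p$ under the $\max_{i\in I}\|d_i\|$-Lipschitz map $z \mapsto \varphi(z-x)$ from Lemma \ref{lemma0}(i), the quantile representation of the conditional value-at-risk, and H\"older's inequality giving the modulus $(1-\alpha)^{-1/p}$, uniformly in $x$), the paper obtains the identical bound by citing \cite[Corollary 12]{Pichler2013} together with Lemma \ref{lemma0} -- your sketch is precisely the standard proof of that cited result, so the two arguments coincide in substance.
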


\begin{proof}
By \cite[Corollary 12]{Pichler2013}, the first part of Lemma \ref{lemma0} and finiteness of $\mathcal{L}$ imply
$$
|Q_{\nu_g}(\mu) - Q_{\nu_g}(\mu')| \leq \mathcal{L} \cdot \max_{i \in I} \|d_i \| \cdot \mathrm{d}_p(\mu,\mu')
$$
for any $\mu, \mu' \in \mathcal{M}^p_s$. In addition, the linearity of $T$ implies that $T(X)$ is nonempty, closed and convex. The result is thus a direct consequence of Lemma \ref{LemmaStronglyConvexStability}.
\end{proof}

\section{Appendix}
\subsection*{Example 2: Details} 
\begin{align}
Q_{EE}(x_1,x_2) &= \int_0^1 \int_0^1 \max\{ 0, \max\{ 0,z_1-x_1,z_2-x_2 \} - \eta \} \, \text{d}z_1 \text{d}z_2 \nonumber \\
&= \int_{0}^{x_2} \int_{x_1}^{1} \max\{ 0, z_1 - x_1 - \eta \} \, \text{d}z_1 \text{d}z_2 \tag{A} \\
&+ \int_{0}^{x_1} \int_{x_2}^{1} \max\{ 0, z_2 - x_2 - \eta \} \, \text{d}z_2 \text{d}z_1 \tag{A'} \\
&+ \int_{x_2}^{1} \int_{x_1}^{1} \max\{ 0, \max\{z_1 - x_1, z_2 - x_2 \} - \eta \} \, \text{d}z_1 \text{d}z_2 \tag{B} 
\end{align}
We directly get
\begin{align*}
(A) &= \frac{1}{2}x_2(1-x_1-\eta)^2 \\
(A')&=  \frac{1}{2}x_1(1-x_2-\eta)^2.
\end{align*}
The calculation of (B) is a little more involved:
\begin{align}
(B) &=  \int_{x_2}^{1+x_2-x_1} \int_{x_1+z_2-x_2}^{1} \max\{ 0, z_1 - x_1 - \eta \} \, \text{d}z_1 \text{d}z_2 \tag{C} \\
&+ \int_{x_2}^{1+x_2-x_1} \int_{x_1}^{x_1+z_2-x_2} \max\{ 0, z_2 - x_2 - \eta \} \, \text{d}z_1 \text{d}z_2 \tag{C'} \\
&+ \int_{1+x_2-x_1}^{1} \int_{x_1}^{1} \max\{ 0, z_2 - x_2 - \eta \} \, \text{d}z_1 \text{d}z_2 \tag{C''}
\end{align}
For the treatment of $(C)$ use that $x_1+ \eta \leq 1$ which implies
$x_2+\eta \leq 1$:
\begin{align*}
(C) &=  \int_{x_2}^{1+x_2-x_1} \int_{x_1+z_2-x_2 \lor x_1+\eta}^{1} z_1 - x_1 - \eta \, \text{d}z_1 \text{d}z_2 \\
&= \int_{x_2}^{x_2+\eta} \int_{x_1+\eta}^{1} z_1 - x_1 - \eta \, \text{d}z_1 \text{d}z_2 \\
&+ \int_{x_2+\eta}^{1+x_2-x_1} \int_{x_1+z_2-x_2}^{1} z_1 - x_1 - \eta \, \text{d}z_1 \text{d}z_2 \\
&= \frac{1}{2} \, \eta \, (1-x_1-\eta)^2 + \frac{1}{2} \int_{x_2+\eta}^{1+x_2-x_1} (1-x_1-\eta)^2 - (z_2-x_2-\eta)^2 \,
\text{d}z_2 \\
&= \frac{1}{2} \, \eta \, (1-x_1-\eta)^2 + \frac{1}{3} (1-x_1-\eta)^3.
\end{align*}
\begin{align*}
(C') &=  \int_{x_2}^{1+x_2-x_1} (z_2-x_2) \max\{ 0, z_2 - x_2 - \eta \} \, \text{d}z_2 \\
&= \int_{x_2 + \eta}^{1+x_2-x_1} (z_2 - x_2)(z_2 - x_2 - \eta) \, \text{d}z_2 \\
&= \int_{0}^{1-x_1-\eta} (z_2'+\eta) z_2' \, \text{d}z_2' \\
&= \frac{1}{3}(1-x_1-\eta)^3 + \frac{1}{2} \eta (1-x_1-\eta)^2.
\end{align*}
We thus get
\begin{align*}
(C) + (C') = \frac{2}{3}(1-x_1-\eta)^3 + \eta (1-x_1-\eta)^2,
\end{align*}

\begin{align*}
(C'') &=  \int_{1+x_2-x_1}^{1} (1-x_1) \max\{ 0, z_2 - x_2 - \eta \} \, \text{d}z_2 \\
&= \int_{1+x_2-x_1}^{1} (1-x_1) (z_2 - x_2 - \eta) \, \text{d}z_2 \\
&= \frac{1}{2}(1-x_1) [ (1-x_2-\eta)^2 - (1-x_1-\eta)^2 ].
\end{align*}

\subsection*{Proof of Lemma \ref{MonMap}:}
\begin{proof}
We shall first show that \eqref{MonMap} implies
\begin{align}\label{MonMap2}
f(x_2,y_2) - f(x_1,y_1) \geq  c \| x_1 - x_2 \|^2 + f'(x_1,y_1)(x_2-x_1, y_2-y_1)
\end{align}
Set $t_k = \frac{k}{m+1}$ for $k=0,\ldots,m+1$ and some arbitrary integer $m$. 
Let $z_1 = (x_1,y_1)$ and $z_2 = (x_2,y_2)$.
By the mean-value theorem we get $t_k < s_k < t_{k+1}$ such that
\begin{align*}
f(z_1 + t_{k+1}(z_2-z_1)) - f(z_1 + t_{k}(z_2-z_1)) \\
= f'(z_1 + s_k (z_2 - z_1)) (t_{k+1}-t_k)(z_2 - z_1).
\end{align*}
This yields
\begin{align*}
f(z_2) - f(z_1) &= \sum_{k=0}^{m} [ f(z_1 + t_{k+1}(z_2-z_1)) - f(z_1 + t_{k}(z_2-z_1)) ] \\
                    &= \sum_{k=0}^{m} [ f'(z_1 + s_k(z_2 - z_1)) - f'(z_1) ] (t_{k+1} - t_k) (z_2 - z_1)
                    + f'(z_1)(z_2-z_1) \\
                    &\geq \kappa \| x_2 - x_1 \|^2 \sum_{k=0}^{m} (t_{k+1} - t_k)s_k + f'(z_1)(z_2 - z_1).
\end{align*}
Since we have
\begin{align*}
\sum_{k=0}^{m} (t_{k+1} - t_k) s_k \geq \sum_{k=0}^{m} (t_{k+1} - t_k) t_k = \frac{1}{(m+1)^2} \sum_{k=0}^{m} k
= \frac{m}{2(m+1)} \rightarrow \frac{1}{2}, \ \ m \rightarrow \infty
\end{align*}
this shows \eqref{MonMap2} which is the same as
\begin{align}\label{MonMap3}
f(z_2) - f(z_1) \geq f'(z_1)(z_2-z_1) + \frac{\kappa}{2} \|x_2-x_1\|^2.
\end{align}
Now let $z = \lambda z_1 + (1-\lambda) z_2$. \eqref{MonMap3} yields
\begin{align*}
&f(z_1) - f(z) \geq f'(z) (z_1 - z) + \frac{\kappa}{2} \|x_1 - [\lambda x_1 + (1-\lambda)x_2] \|^2 \\
&f(z_2) - f(z) \geq f'(z) (z_2 - z) + \frac{\kappa}{2} \|x_2 - [\lambda x_1 + (1-\lambda)x_2] \|^2.
\end{align*}
Multiplying by $\lambda$ and $(1-\lambda)$ respectively and adding up gives
\begin{align*}
&\lambda f(z-1) + (1-\lambda) f(z_2) - f(z) \\
\geq \ &f'(z)[ \lambda(z_1 - z) + (1-\lambda) ( z_2 - z ) ] + \frac{\kappa}{2} \big[ \lambda 
\| x_1 - [ \lambda x_1 + (1-\lambda) x_2 ] \|^2 + \\ &(1-\lambda) \| x_2 - [\lambda x_1 + (1\lambda) x_2] \|^2 \big] \\
= \ &\frac{\kappa}{2} \lambda (1-\lambda) \| x_1 - x_2 \|^2
\end{align*}
due to the first bracketed term vanishing. Rearranging terms yields \eqref{PSC}.
\end{proof}

\begin{example}
We compute the $\alpha CVaR$ for an elementary example via representation \eqref{varrep}.\\  
Consider $\varphi(t) = \max\{t,-t\}$, $\mu = \frac{1}{2}\mathbb{U}_{| (0,1)} + 
\frac{1}{2}\mathbb{U}_{| (1.5,2.5)}$, $x \in (0,1)$ and $\eta > 0$:\\
For the expected excess we get
\begin{align*}
4 \, Q_{EE}(x,\eta) = 
\begin{cases}
(x-\eta)^2 + (1-x-\eta)^2 + \\
+ (\frac{5}{2}-x-\eta)^2 -  (\frac{3}{2}-x-\eta)^2 & 0\leq x-\eta \leq x+\eta \leq 1, \\
(1-x-\eta)^2 + (\frac{5}{2}-x-\eta)^2 -\\
- (\frac{3}{2}-x-\eta)^2& x-\eta \leq 0 \leq x+\eta \leq 1, \\
(\frac{5}{2}-x-\eta)^2 -  (\frac{3}{2}-x-\eta)^2& x-\eta \leq 0 \leq 1 \leq x+\eta \leq \frac{3}{2}, \\
(\frac{5}{2}-x-\eta)^2 & x-\eta \leq 0 \leq \frac{3}{2} \leq x+\eta \leq \frac{5}{2}, \\
(x-\eta)^2 + (\frac{5}{2}-x-\eta)^2 -\\-(\frac{3}{2}-x-\eta)^2& 0 \leq x-\eta \leq 1 \leq x+\eta \leq \frac{3}{2}, \\
(x-\eta)^2 + (\frac{5}{2}-x-\eta)^2& 0 \leq x-\eta \leq \frac{3}{2} \leq x+\eta \leq \frac{5}{2}, \\
0& x-\eta \leq 0 \leq \frac{5}{2} \leq x+\eta. \\
\end{cases}
\end{align*}
In order to calculate $Q_{\alpha VaR}(x)$ we first compute
\begin{align*}
&  \mu( \varphi(z-x) \leq t) \\
&= \frac{1}{2} \mathbb{U}_{| \ (0,1)} ([x-t,x+t]) + \frac{1}{2} \mathbb{U}_{| \ (1.5,2.5)} ([x-t,x+t]) \\
&=\begin{cases}
t,& 0 \leq x-t \leq x+t \leq 1, \\
\frac{1}{2}(x+t),& x-t \leq 0 \leq x+t \leq 1, \\
\frac{1}{2} + \frac{1}{2}(x+t-\frac{3}{2}),& x-t \leq 0 \leq \frac{3}{2} \leq x+t \leq \frac{5}{2}, \\
\frac{1}{2}(1-x+t),& 0 \leq x-t \leq 1 \leq x+t \leq \frac{3}{2}, \\
\frac{1}{2}(1-x+t) + \frac{1}{2}(x+t-\frac{3}{2}) ,& 0 \leq x-t \leq \frac{3}{2} \leq x+t \leq \frac{5}{2}, \\
1,& x-t \leq 0 \leq \frac{5}{2} \leq x+t. \\
\end{cases}
\end{align*}
For given $x$ and $\alpha$ now determine the minimal $t^*$ for which $\mu(\varphi(z-x) \leq t) \geq 1-\alpha$,
i.e. $t^* = Q_{\alpha VaR}(x)$. For $0 < \alpha < \frac{1}{4}$ we get $Q_{\alpha VaR}(x) > 1$ on the entire set $V$, plugging this into \eqref{varrep} exhibits $Q_{\alpha CVaR}$ to be nowhere strongly convex on $V$. For values of $\alpha$ close to $1$ and $x$ 
close to $0$ one also sees $Q_{\alpha CVaR}$ failing to be strongly convex. This is 
due to the fact that $Q_{\alpha VaR}$ is decreasing in a neighborhood of $0$.  
For $\frac{1}{4} < \alpha < \frac{1}{2}$ one can show strong convexity on 
$U = (\frac{5}{4} - \alpha, 1)$. 
\end{example}


\begin{thebibliography}{}

\bibitem{Alt1983}
W. Alt, \textit{Lipschitzian perturbations of infinite optimization problems}, in Mathematical Programming with Data Perturbations II (ed. A.V. Fiacco), M. Dekker, New York and Basel, pp. 7-21 (1983).

\bibitem{ADEH99}
P. Artzner, F. Delbaen, J.-M. Eber, D. Heath, \textit{Coherent Measures of Risk}, Math. Finance, 9, pp. 203–228 (1999).

\bibitem{Ba14}
F. Bach, \textit{Adaptivity of Averages Stochastic Gradient Descent to Local Strong Convexity
for Logistic Regression}, Journal of Machine Learning Research 15, 
pp. 595-627 (2014).

\bibitem{BGM09}
A. Balbás, J. Garrido, S. Mayoral, \textit{Properties of distortion risk measures}, Methodol. Comput. Appl. Probab., 11, pp. 385–399 (2009).

\bibitem{BK17}
D. Belomestny, V. Krätschmer, \textit{Optimal stopping under probability distortions and law invariant coherent risk measures}, Mathematics of Operations Research 42, pp. 806-833 (2017).

\bibitem{BL11}
J. R. Birge, F. Louveaux, \textit{Introduction to Stochastic Programming}, second edition, Springer, New York (2011).

\bibitem{CSS17}
M. Claus, R. Schultz, K. Sp\"urkel, \textit{Strong Convexity for Stochastic Programming with Deviation Risk-Measures}, Computational Management Science, 15(3), pp. 411-429 (2018).

\bibitem{FS11}
H. Foellmer, A. Schied, \textit{Stochastic Finance}, extended edition, Walter de Gruyter \& Co., Berlin (2011).

\bibitem{GoebelRockafellar2008}
R. Goebel, R. T. Rockafellar, \textit{Local strong convexity and local Lipschitz continuity of the gradient of convex functions}, Journal of Convex Analysis, 15(2), pp. 263–270 (2008).

\bibitem{Klatte1987}
D. Klatte, \textit{A note on quantitative stability results in nonlinear optimization}, in Proc. 19. Jahrestagung Mathematische Optimierung (ed. K. Lommatzsch), Seminarbericht Nr. 90, Humboldt-Universit\"at Berlin, Sektion Mathematik, pp. 77-86 (1987).

\bibitem{Kusuoka2001}
S. Kusuoka, \textit{On law-invariant coherent risk measures}, Advances in Mathematical Economics (eds. S. Kusuoka, T. Maruyama), Vol. 3, Springer, Tokyo, pp. 83–95 (2001)

\bibitem{Nes04}
Y. Nesterov, \textit{Introductory Lectures on Convex Optimization: A Basic Course},
Springer, 2004.

\bibitem{Pflug2006}
G. C. Pflug, \textit{On distortion functionals}, Statist. Risk Model., 24, pp. 45–60 (2006).

\bibitem{PR07}
G. C. Pflug, W. Roemisch, \textit{Modeling, Measuring and Managing Risk},
World Scientific Publishing, Singapur (2007).

\bibitem{Pichler2013}
A. Pichler, \textit{Evaluations of Risk Measures for Different Probability Measures}, SIAM J. Optim, 23(1), pp. 530-551 (2013).

\bibitem{RaRue98}
S. T. Rachev, L. Rüschendorf, \textit{Mass Transportation Problems Vol. I: Theory,
Vol. II: Applications}, Probab. Appl. 25, Springer, New York (1998).

\bibitem{RhOr70}
W. C. Rheinboldt, J. M. Ortega, \textit{Iterative Solution of Nonlinear Equations in Several Variables},
Academic Press, New York/London (1970). 


\bibitem{RW98}
R. T. Rockafellar and R. J-B. Wets, \textit{Variational Analysis}, Springer, Berlin, (1998).

\bibitem{RoemischSchultz1989}
W. R\"omisch, R. Schultz, \textit{Stability of solutions for stochastic programs with complete recourse having $C^{1,1}$ data}, Manuskript, Institut f\"ur Operations Research, Universit\"at Z\"urich, Switzerland (1989).

\bibitem{S94}
R. Schultz, \textit{Strong Concexity in Stochastic Programs with Complete Recourse}, Journal of Computational and Applied Mathematics 56, pp. 3-22 (1994).

\bibitem{Shapiro2013}
A. Shapiro, \textit{On Kusuoka Representation of Law Invariant Risk Measures}, Math. of Oper. Res., 38(1), pp. 142-152 (2013).

\bibitem{SDR14}
A. Shapiro, D. Dentcheva, A. Ruszczyński,  \textit{Lectures on Stochastic Programming - Modeling and Theory}, second edition, MOS-SIAM, Philadelphia (2014).

\bibitem{Tsukahara09}
H. Tsukahara, \textit{One-parameter families of distortion risk measures}, Math. Finance,
19, pp. 691–705 (2009).

\bibitem{UR99}
S. Uryasev, R. T. Rockafellar, \textit{Optimization of Conditional Value-at-Risk}, Research Report 99-4, ISE Dept. University of Florida (1999).

\bibitem{UR02}
S. Uryasev, R. T. Rockafellar, \textit{Conditional value-at-risk for general loss distributions}, Journal of Banking and Finance, 26, pp. 1443-1471, (2002).

\bibitem{Vil03}
C. Villani, \textit{Topics in Optimal Transportation}, Grad. Stud. Math. 58, AMS, Provi-
dence, RI (2003).

\bibitem{Vil09}
C. Villani, \textit{Optimal Transport, Old and New}, Grundlehren Math. Wiss. 338, Springer,
Berlin (2009).

\bibitem{Wang2000}
S. S. Wang, \textit{A Class of Distortion Operators for Pricing Financial and Insurance Risks}, The Journal of Risk and Insurance, 67(1), pp. 15-36 (2000).

\bibitem{WaXi17}
J. Wang, L. Xiao, \textit{Exploiting Strong Convexity from Data with Primal-Dual
First-Order Algorithms}, Proceedings of the 34th International Conference on Machine 
Learning, pp. 3694-3702 (2017).

\end{thebibliography}
\end{document}